\documentclass[11pt,twoside]{article}

\newcommand{\HeadTitle}{Compact Coefficient Formulae for Logarithmic Tangent and Hyperbolic Integrals}

\newcommand{\HeadTitleTwo}{\begin{center}
\Large{\textit{Compact Coefficient Formulae for Logarithmic Tangent and Hyperbolic Integrals}}
\end{center}}

\usepackage[margin=0.8in]{geometry}
\usepackage{amsmath, amssymb, amsfonts, mathtools}
\usepackage{amsthm}
\usepackage{mathrsfs}
\usepackage{stmaryrd}
\usepackage{esint}
\usepackage{eqnalign}

\usepackage{graphicx}
\usepackage{enumitem}
\usepackage[most]{tcolorbox}
\usepackage{float}
\usepackage{listings}
\usepackage{array}
\usepackage{booktabs}
\usepackage{xcolor}
\usepackage{emptypage}
\usepackage{tensor}
\usepackage{tabularx}

\usepackage{fancyhdr}
\usepackage{hyperref}
\usepackage[nameinlink,noabbrev]{cleveref}

\usepackage{titlesec}
\usepackage[T1]{fontenc}
\usepackage[utf8]{inputenc}
\usepackage{lmodern}
\usepackage{titling}

\usepackage[backend=biber,style=numeric]{biblatex}
\DeclareMathOperator{\sech}{sech}

\providecommand{\HeadTitle}{} 
\providecommand{\HeadTitleTwo}{} 
\providecommand{\HeadAuthor}{Luc Ramsès TALLA WAFFO} 

\titleformat{\section}[block]
  {\normalfont\large\bfseries\itshape\centering}
  {§\thesection.}
  {1em}
  {}

\titleformat{\subsection}
  {\normalfont\normalsize\bfseries}
  {\thesubsection}
  {1em}
  {}

\newtheorem{theorem}{Theorem}[section]
\newtheorem{lemma}[theorem]{Lemma}
\newtheorem{proposition}[theorem]{Proposition}
\newtheorem{corollary}[theorem]{Corollary}

\newtheorem{remark}[theorem]{Remark}

\newtheorem{example}[theorem]{Example}

\crefname{example}{example}{examples}
\Crefname{example}{Example}{Examples}

\crefname{corollary}{corollary}{corollaries}
\Crefname{corollary}{Corollary}{Corollaries}

\crefname{definition}{definition}{definitions}
\Crefname{definition}{Definition}{Definitions}

\crefname{remark}{remark}{remarks}
\Crefname{remark}{Remark}{Remarks}

\crefname{conjecture}{conjecture}{conjectures}
\Crefname{conjecture}{Conjecture}{Conjectures}

\crefname{lemma}{lemma}{lemmas}
\Crefname{lemma}{Lemma}{Lemmas}

\crefname{proposition}{proposition}{propositions}
\Crefname{proposition}{Proposition}{Propositions}

\crefname{theorem}{theorem}{theorems}
\Crefname{theorem}{Theorem}{Theorems}

\numberwithin{equation}{section}

\usepackage{fontspec}
\begin{document}

\thispagestyle{fancy}

\vspace{0.2cm}

\begin{center}
\Large{\HeadTitleTwo}
\end{center}

\hspace{3cm}

\begin{center}
Luc Ramsès TALLA WAFFO \\
Technische Universität Darmstadt\\
Karolinenplatz 5, 64289 Darmstadt, Germany\\
ramses.talla@stud.tu-darmstadt.de\\
\vspace{0.5cm}
\today
\end{center}

\begin{abstract}
We develop compact coefficient-extraction formulae for several families of
hyperbolic, logarithmic tangent, and Malmsten-type integrals whose values are
finite linear combinations of odd zeta values and even Dirichlet beta values.
The principal advantage of these formulae is that coefficients previously
encoded by recursive arrays or nested finite sums are replaced by a single
coefficient of an explicit elementary expression.  This makes the
coefficients easier to compute, keeps the dependence on the parameters
visible, and reveals structural features---such as vanishing ranges,
extremal coefficients, and sign patterns---without hidden cancellations.
For shifted hyperbolic integrals with numerator $\sinh((2k+1)x)$, the
coefficients are expressed through Chebyshev--arcsine extractions.  The same
mechanism yields Laurent coefficient formulae for logarithmic tangent
integrals and leads to direct proofs of simple initial and terminal
coefficients, including a parity-free terminal identity.  For $m,n\geq1$,
$m\geq n$, and $m+n$ even, we prove
\[
\int_0^\infty\frac{\tanh^{m+1}x}{x^{n+1}}\,dx
=
(-1)^{(m-n)/2}
\sum_{p=\lceil n/2\rceil}^{(m+n)/2}
\binom{2p}{n}
(2^{2p+1}-1)
\frac{\zeta(2p+1)}{\pi^{2p}}
[u^{m+n-2p}](u\cot u)^{m+1}.
\]
In the diagonal case, this gives the family
$\int_0^\infty(\tanh x/x)^N\,dx$ in a direct, non-recursive form and
makes the disappearance of the initial zeta values immediate.
\end{abstract}

\vspace{0.2cm}

\paragraph{Notation.}
We write \(\mathbb N=\{1,2,\ldots\}\).  The Riemann zeta function and the
Dirichlet beta function are denoted by \(\zeta(s)\) and \(\beta(s)\),
respectively.  The Chebyshev polynomial of the second kind \cite{MasonHandscomb2003}
\cite{Mason1993}\cite{Rivlin1990}
\cite{NIST:DLMF} is denoted by
\(U_m\) and is normalized by
\[
U_m(\cos\theta)=\frac{\sin((m+1)\theta)}{\sin\theta}.
\]
For a power series or Laurent series \(F\), the notation \([x^r]F(x)\)
denotes the coefficient of \(x^r\) in the expansion at the origin.

\vspace{0.5cm}

\section*{Introduction}

Definite integrals involving hyperbolic functions and logarithmic tangent
kernels often admit finite evaluations in terms of odd values of the Riemann
zeta function and even values of the Dirichlet beta function.  In many such
formulae, however, the special values themselves are not the main obstacle:
the essential complexity is carried by the accompanying coefficients.  These
coefficients are frequently presented through recurrences, nested finite sums,
or auxiliary arrays, which may be effective for computation at fixed
parameters but tend to conceal the dependence on those parameters and the
structure of the resulting expansion.  Kyrion \cite{Kyrion2025}, for example,
studied integrals of the form
\[
\int_0^\infty \frac{\tanh x}{x}\sech^L x\,e^{-Tx}\,dx
\]
and, in the case \(T=0\), obtained recursive coefficient formulae for the
zeta and beta expansions of
\[
\int_0^\infty \frac{\tanh x}{x}\sech^{2N}x\,dx
\qquad\text{and}\qquad
\int_0^\infty \frac{\tanh x}{x}\sech^{2N+1}x\,dx.
\]
Related contour evaluations for shifted hyperbolic integrals and logarithmic
tangent integrals were obtained in \cite{talla_waffo_integral_2025}.

\vspace{0.1cm}

The aim of the present paper is to recast coefficients of this type as
explicit coefficient extractions from elementary functions.  The guiding
principle is simple: instead of generating a family of coefficients
recursively, one identifies the entire family with coefficients of a single
power series or Laurent series.  In the cases considered here, the relevant
expressions involve Chebyshev polynomials of the second kind, powers of
\(\arcsin x\), and the generating functions \((u\cot u)^r\) and
\((w/\sinh w)^r\).  This reformulation has several advantages.  It gives
direct finite formulae, keeps the parameter dependence visible, and makes
structural properties such as support, vanishing ranges, endpoint
coefficients, and sign patterns accessible without resolving hidden
cancellations in nested sums.  The Chebyshev identities used throughout are
standard; see, for example, \cite[Section~4.6]{MasonHandscomb2003},
\cite{Mason1993,Rivlin1990}, and \cite[Chapter~18]{NIST:DLMF}.

\vspace{0.1cm}

We begin with shifted hyperbolic integrals having odd numerator frequency,
\[
\int_{0}^{\infty}
\frac{\sinh((2k+1)x)}{x\cosh^{2n+1}x}\,dx
\qquad\text{and}\qquad
\int_{0}^{\infty}
\frac{\sinh((2k+1)x)}{x\cosh^{2n}x}\,dx.
\]
The zeta and beta coefficients in these families are reduced to the single
extractions
\[
[x^{2n}]\,U_{2k}(x)(\arcsin x)^{2p}
\qquad\text{and}\qquad
[x^{2n-1}]\,U_{2k}(x)(\arcsin x)^{2p-1},
\]
respectively.  The case \(k=0\) recovers the unshifted families considered by
Kyrion, whereas arbitrary \(k\) is incorporated by the same Chebyshev factor.
Linearization of odd powers of \(\sinh x\) then collapses the corresponding
Chebyshev combination to \((x^2-1)^k\), yielding compact formulae for
integrals with numerator \(\sinh^{2k+1}x\).  As a consequence, two families of
integrals involving \(1/\operatorname{arctanh}x\) on \((0,1)\) follow by the
substitution \(x=\tanh t\).

\vspace{0.1cm}

A parallel argument treats the even-frequency numerator \(\sinh(2kx)\).  By
combining its binomial expansion with a Chebyshev polynomial identity, the
corresponding integrals with even and odd powers of \(\cosh x\) in the
denominator are again expressed by one coefficient extraction, now involving
\(U_{2k-1}\).  Thus the odd- and even-frequency shifted families fit into the
same coefficient-extraction framework, although the parity of the denominator
determines whether zeta or beta values occur.

\vspace{0.1cm}

The same viewpoint applies to the logarithmic tangent integrals
\[
\int_0^{\pi/4}\frac{\sin(4nx)}{\ln(\tan x)}\,dx
\qquad\text{and}\qquad
\int_0^{\pi/4}\frac{\cos((4n-2)x)}{\ln(\tan x)}\,dx.
\]
After combining multiple-angle formulae with binomial identities for
Chebyshev polynomials, every zeta or beta coefficient becomes the coefficient
of \(x\) in a single Laurent product involving
\(U_{2n-1}(1/x)\) or \(U_{2n-2}(1/x)\) and a power of \(\arcsin x\).  In
addition to providing the complete expansions in compact form, these Laurent
representations isolate the boundary coefficients.  In particular, they give
a direct coefficient-extraction proof for the first beta coefficient and lead
to the parity-independent terminal identity
\[
[x]U_{r-1}\!\left(\frac1x\right)(\arcsin x)^r=2^{r-1}.
\]

\vspace{0.1cm}

The coefficient method also extends beyond the preceding contour families.  For
\(m,n\geq1\), \(m\geq n\), and \(m+n\) even, we consider
\[
\int_0^\infty\frac{\tanh^{m+1}x}{x^{n+1}}\,dx.
\]
Repeated integration by parts reduces this integral to a finite combination
of the odd-sinh family.  An auxiliary polynomial describing repeated
differentiation of \(\tanh^{m+1}x\), together with a cotangent derivative
identity, then collapses the remaining finite sum to a single coefficient of
\((u\cot u)^{m+1}\).  The resulting formula is
\[
\int_0^\infty
\frac{\tanh^{m+1}x}{x^{n+1}}\,dx
=
(-1)^{(m-n)/2}
\sum_{p=\lceil n/2\rceil}^{(m+n)/2}
\binom{2p}{n}
(2^{2p+1}-1)
\frac{\zeta(2p+1)}{\pi^{2p}}
[u^{m+n-2p}](u\cot u)^{m+1}.
\]
This formula is finite, explicit, and non-recursive.  Its lower summation bound
makes the disappearance of the initial zeta values immediate, while the
constant term of \((u\cot u)^{m+1}\) gives the highest zeta coefficient
directly.  The same representation also yields a simple first coefficient
when \(n\) is even and proves alternation of the nonzero zeta coefficients in
the relevant range.  In the diagonal case \(m=n\), it provides 
\[
\int_0^\infty
\left(\frac{\tanh x}{x}\right)^{n+1}\,dx
=
\sum_{p=\lceil n/2\rceil}^{n}
\binom{2p}{n}
(2^{2p+1}-1)
\frac{\zeta(2p+1)}{\pi^{2p}}
[u^{2n-2p}](u\cot u)^{n+1},
\]
replacing the nested and recursive coefficients that
occur in earlier descriptions of the same integral given by Kyrion
\cite{Kyrion2025}.
\vspace{0.1cm}
After suitable choice of parameters $m$ and $n$, we also deduce the closed-form of the integral$\displaystyle\int_0^\infty
\frac{\tanh^{2n}x}{x^2}\,dx$ which occurs in \cite{TallaWaffo2025arxiv2511.02843}
\[
\int_0^\infty
\frac{\tanh^{2n}x}{x^2}\,dx
=
(-1)^{n-1}
\sum_{p=1}^{n}
2p\,(2^{2p+1}-1)
\frac{\zeta(2p+1)}{\pi^{2p}}
[u^{2n-2p}](u\cot u)^{2n}
\]
Finally, the coefficient-extraction approach is applied to Blagouchine's
logarithmic hyperbolic integrals
\[
\mathcal I_r(a,b)
=
\int_0^\infty
\frac{\ln(x^2+a^2)}{\cosh^r(bx)}\,dx.
\]
Blagouchine \cite{Blagouchine2014} gave parity-dependent general forms and
tabulated the rational coefficients appearing in them.  Here those
coefficients are made explicit for all indices: both coefficient families are
obtained directly from
\[
\left(\frac{w}{\sinh w}\right)^r.
\]
This gives closed coefficient-extraction formulae for the even and odd cases
within the same elementary generating-function framework used throughout the
paper.

\vspace{0.1cm}

A final application concerns Malmsten-type integrals for the even beta values
$\beta(2n)$ and the odd zeta values $\zeta(2n+1)$.  Starting from finite
Eulerian expansions, we obtain a rigorous proof of the compact
polylogarithmic representations previously derived in
\cite{TallaWaffo2025arxiv2511.02843}, without relying on an interchange of an
infinite series with the integral.  The central mechanism is an
Eulerian--Chebyshev coefficient collapse.  If
$\left\langle {m\atop k}\right\rangle$ and
$\left\langle {m\atop k}\right\rangle^{B}$ denote Eulerian numbers of type
$A$ and type $B$, respectively, then the two finite Chebyshev sums satisfy
\[
\begin{dcases}
\displaystyle
[z^{2n-1}](\arcsin z)^{2p-1}
\sum_{k=0}^{n-1}(-1)^k
\left\langle {2n-1\atop k}\right\rangle^{B}
U_{2n-2k-2}(z)
=
(-1)^{n-1}2^{2n-2}(2n-1)!\,\delta_{p,n},
\\[3ex]
\displaystyle
[z^{2n}](\arcsin z)^{2p}
\sum_{k=0}^{n-1}(-1)^k
\left\langle {2n\atop k}\right\rangle
U_{2n-2k-2}(z)
=
\frac{(-1)^{n-1}}{2}(2n)!\,\delta_{p,n}.
\end{dcases}
\]
Thus every off-diagonal term $p\neq n$ vanishes.  For $p>n$ this is forced by
the order at the origin, whereas for $p<n$ it results from an exact
cancellation among nonzero contributions.  After normalization, these
identities may be viewed as a triangular biorthogonality relation between
powers of $\arcsin z$ and the corresponding Eulerian--Chebyshev polynomial
families.  This collapse is precisely what reduces the finite Eulerian
expansions to a single beta or zeta value.

\vspace{0.1cm}

The paper is organized as follows.  In \cref{sec:coefficient-formulae}, we
convert the coefficient sums for odd-frequency shifted hyperbolic integrals
into Chebyshev--arcsine extractions, derive the corresponding zeta and beta
formulae, pass to odd powers of \(\sinh x\), and obtain the reciprocal
hyperbolic arctangent corollaries.  In
\cref{sec:coefficient-formulae_double_angle}, we treat the even-frequency
family \(\sinh(2kx)\).  In
\cref{sec:coefficient-formulae-logarithmic-tangent-integrals}, we derive the
compact Laurent coefficient formulae for the logarithmic tangent integrals and
prove the first and terminal coefficient identities.  In
\cref{section:general-tangent-integral}, we develop the auxiliary polynomial,
perform the repeated integration-by-parts reduction, prove the cotangent
collapse, and derive the general tangent-power formula together with its
structural consequences and examples.  In
\cref{sec:blagouchine-coefficients}, we obtain explicit coefficient formulae
for Blagouchine's even- and odd-power logarithmic hyperbolic integral
families.  Finally, in \cref{sec:Malmsten-integrals}, we give the finite
Eulerian proof of the compact Malmsten-type polylogarithmic representations,
establish the Eulerian--Chebyshev coefficient collapse, and derive the
Kronecker-delta biorthogonality relation underlying the cancellation.

\vspace{0.3cm}

\section{Coefficient Formulae for $\displaystyle\int_{0}^{\infty}
\frac{\sinh((2k+1)x)}{x\cosh^{n}x}\,dx$ and $\displaystyle\int_{0}^{\infty}
\frac{\sinh^{2k+1}x}{x\cosh^{n}x}\,dx$}\label[section]{sec:coefficient-formulae}

\begin{lemma}[Conversion of the \(c\)- and \(d\)-sums]
\label[lemma]{lemma:cd-sums-to-arcsine-chebyshev}
Let \(n\in\mathbb N\), \(0\le k<n\), and \(1\le p\le n\). Let
\[
\left(\frac{t}{\sinh t}\right)^{2n+1}
=
\sum_{j\ge0}c_{j,n}t^j,
\qquad
\left(\frac{t}{\sinh t}\right)^{2n}
=
\sum_{j\ge0}d_{j,n}t^j.
\]
Let \(U_m\) denote the Chebyshev polynomial of the second kind. Then
\begin{equation}\label{eq:c-sum-to-arcsine-chebyshev}
\sum_{m=p}^{n}
c_{2n-2m,n}
\frac{(2k+1)^{2m-2p}}{(2m-2p)!}
=
(-1)^{n+p+k}
[x^{2n}]\,U_{2k}(x)(\arcsin x)^{2p},
\end{equation}
and
\begin{equation}\label{eq:d-sum-to-arcsine-chebyshev}
\sum_{m=0}^{n-p}
d_{2m,n}
\frac{(2k+1)^{2n-2m-2p}}
{(2n-2m-2p)!}
=
(-1)^{n+p+k}
[x^{2n-1}]\,U_{2k}(x)(\arcsin x)^{2p-1}.
\end{equation}
\end{lemma}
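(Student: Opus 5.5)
The plan is to read both left-hand sides as a single Taylor coefficient, write that coefficient as a residue, and move the residue from the hyperbolic variable \(t\) to the variable \(x=\sin\theta\) by the substitution \(t=i\theta\). Throughout, put \(a=2k+1\).

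\emph{Step 1 (Cauchy product).} Because \(\cosh(at)=\sum_{j\ge0}a^{2j}t^{2j}/(2j)!\), the coefficient of \(t^{2n-2p}\) in \((t/\sinh t)^{2n+1}\cosh(at)\) is \(\sum_j c_{2n-2p-2j,n}\,a^{2j}/(2j)!\). Setting \(j=m-p\) turns this into exactly the left side of \eqref{eq:c-sum-to-arcsine-chebyshev}. Similarly, the left side of \eqref{eq:d-sum-to-arcsine-chebyshev} is \([t^{2n-2p}]\,(t/\sinh t)^{2n}\cosh(at)\), taking \(j=n-m-p\); the odd-index coefficients \(d_{2m+1,n}\) vanish because \((t/\sinh t)^{2n}\) is even.

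\emph{Step 2 (residues).} Rewriting the coefficients as residues gives
\[
\mathrm{LHS}_c=\operatorname*{Res}_{t=0}\frac{t^{2p}\cosh(at)}{\sinh^{2n+1}t},\qquad
\mathrm{LHS}_d=\operatorname*{Res}_{t=0}\frac{t^{2p-1}\cosh(at)}{\sinh^{2n}t}.
\]

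\emph{Step 3 (change of variables).} Substitute \(t=i\theta\), so that \(\sinh t=i\sin\theta\), \(\cosh(at)=\cos(a\theta)\) and \(dt=i\,d\theta\). Residues of meromorphic differentials are invariant under a local biholomorphic change of variable; the map \(t\mapsto-it\) and the map \(\theta\mapsto\sin\theta\), with inverse \(\arcsin\), are both biholomorphic near \(0\).

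\begin{itemize}
\item In the \(c\)-case, the powers of \(i\) combine to \(i^{2p}\,i^{-2n-1}\,i=(-1)^{p-n}\).
\item In the \(d\)-case they combine to \(i^{2p-1}\,i^{-2n}\,i=(-1)^{p-n}\).
\end{itemize}
So both signs are \((-1)^{n+p}\), and we are left with residues of \(\theta^{2p}\cos(a\theta)\sin^{-2n-1}\theta\,d\theta\) and \(\theta^{2p-1}\cos(a\theta)\sin^{-2n}\theta\,d\theta\), respectively.

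Now put \(x=\sin\theta\), so \(d\theta=dx/\cos\theta\). The key Chebyshev identity is
\[
U_{2k}(\sin\theta)=U_{2k}\bigl(\cos(\tfrac{\pi}{2}-\theta)\bigr)=\frac{\sin\bigl((2k+1)(\tfrac{\pi}{2}-\theta)\bigr)}{\cos\theta}=(-1)^k\frac{\cos((2k+1)\theta)}{\cos\theta}.
\]
It converts \(\cos(a\theta)\,d\theta\) into \((-1)^kU_{2k}(x)\,dx\). The residues therefore become
\[
(-1)^{n+p+k}\operatorname*{Res}_{x=0}\frac{U_{2k}(x)(\arcsin x)^{2p}}{x^{2n+1}}
\quad\text{and}\quad
(-1)^{n+p+k}\operatorname*{Res}_{x=0}\frac{U_{2k}(x)(\arcsin x)^{2p-1}}{x^{2n}}.
\]
These equal \((-1)^{n+p+k}[x^{2n}]U_{2k}(x)(\arcsin x)^{2p}\) and \((-1)^{n+p+k}[x^{2n-1}]U_{2k}(x)(\arcsin x)^{2p-1}\), which are the two claimed identities.

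The main point needing care is Step 3: the invariance of the residue under the substitutions, and keeping track of the powers of \(i\) and of \((-1)^k\). If a purely formal argument is preferred, the residue invariance can be replaced by the formal Lagrange/chain-rule identity \(\operatorname{Res}f(t)\,dt=\operatorname{Res}f(\phi(s))\phi'(s)\,ds\) for \(\phi(0)=0\), \(\phi'(0)\neq0\).
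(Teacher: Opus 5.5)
Your proposal is correct and follows essentially the same route as the paper. Both arguments use the Cauchy product with \(\cosh((2k+1)t)\), write the coefficient as a residue, rotate by \(t=i\theta\) (producing the sign \((-1)^{n+p}\)), and substitute \(x=\sin\theta\) with the identity \(U_{2k}(\sin\theta)=(-1)^k\cos((2k+1)\theta)/\cos\theta\). Your phrasing in terms of invariance of residues of differentials under local biholomorphisms is just a cleaner statement of the paper's contour-substitution step.
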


\begin{proof}
By the Cauchy product with
\[
\cosh((2k+1)t)
=
\sum_{\ell\ge0}
\frac{(2k+1)^{2\ell}t^{2\ell}}{(2\ell)!},
\]
we obtain
\begin{equation}\label{eq:cd-cauchy-product-local}
\begin{cases}
\displaystyle
\sum_{m=p}^{n}
c_{2n-2m,n}
\frac{(2k+1)^{2m-2p}}{(2m-2p)!}
=
[t^{2n-2p}]
\left(\frac{t}{\sinh t}\right)^{2n+1}
\cosh((2k+1)t),
\\[2.6em]
\displaystyle
\sum_{m=0}^{n-p}
d_{2m,n}
\frac{(2k+1)^{2n-2m-2p}}
{(2n-2m-2p)!}
=
[t^{2n-2p}]
\left(\frac{t}{\sinh t}\right)^{2n}
\cosh((2k+1)t).
\end{cases}
\end{equation}
By Cauchy's coefficient formula and the substitution \(t=iz\),
\eqref{eq:cd-cauchy-product-local} becomes
\begin{equation}\label{eq:cd-contour-local}
\begin{cases}
\displaystyle
\sum_{m=p}^{n}
c_{2n-2m,n}
\frac{(2k+1)^{2m-2p}}{(2m-2p)!}
=
(-1)^{n+p}
\frac{1}{2\pi i}
\oint_{\Gamma}
\left(\frac{z}{\sin z}\right)^{2n+1}
\cos((2k+1)z)
\frac{dz}{z^{2n-2p+1}},
\\[3.0em]
\displaystyle
\sum_{m=0}^{n-p}
d_{2m,n}
\frac{(2k+1)^{2n-2m-2p}}
{(2n-2m-2p)!}
=
(-1)^{n+p}
\frac{1}{2\pi i}
\oint_{\Gamma}
\left(\frac{z}{\sin z}\right)^{2n}
\cos((2k+1)z)
\frac{dz}{z^{2n-2p+1}},
\end{cases}
\end{equation}
where \(\Gamma\) is a sufficiently small positively oriented circle around
\(0\).

Now put
\[
x=\sin z.
\]
Then \(z=\arcsin x\) and \(dz=dx/\sqrt{1-x^2}\). Hence
\eqref{eq:cd-contour-local} becomes
\begin{equation}\label{eq:cd-after-sine-substitution-local}
\begin{cases}
\displaystyle
\sum_{m=p}^{n}
c_{2n-2m,n}
\frac{(2k+1)^{2m-2p}}{(2m-2p)!}
=
(-1)^{n+p}
\frac{1}{2\pi i}
\oint_{\sin\Gamma}
\frac{\cos((2k+1)\arcsin x)}{\sqrt{1-x^2}}
\frac{(\arcsin x)^{2p}}{x^{2n+1}}\,dx,
\\[3.0em]
\displaystyle
\sum_{m=0}^{n-p}
d_{2m,n}
\frac{(2k+1)^{2n-2m-2p}}
{(2n-2m-2p)!}
=
(-1)^{n+p}
\frac{1}{2\pi i}
\oint_{\sin\Gamma}
\frac{\cos((2k+1)\arcsin x)}{\sqrt{1-x^2}}
\frac{(\arcsin x)^{2p-1}}{x^{2n}}\,dx.
\end{cases}
\end{equation}

The factor
\[
\frac{\cos((2k+1)\arcsin x)}{\sqrt{1-x^2}}
\]
is exactly the Chebyshev factor. Indeed, if \(x=\sin\theta\), then
\[
U_{2k}(x)
=
U_{2k}(\sin\theta)
=
\frac{\sin((2k+1)(\pi/2-\theta))}{\cos\theta}
=
(-1)^k
\frac{\cos((2k+1)\theta)}{\cos\theta}.
\]
Thus
\[
\frac{\cos((2k+1)\arcsin x)}{\sqrt{1-x^2}}
=
(-1)^k U_{2k}(x).
\]
Therefore \eqref{eq:cd-after-sine-substitution-local} becomes
\[
\begin{cases}
\displaystyle
\sum_{m=p}^{n}
c_{2n-2m,n}
\frac{(2k+1)^{2m-2p}}{(2m-2p)!}
=
(-1)^{n+p+k}
\frac{1}{2\pi i}
\oint_{\sin\Gamma}
U_{2k}(x)(\arcsin x)^{2p}
\frac{dx}{x^{2n+1}},
\\[3.0em]
\displaystyle
\sum_{m=0}^{n-p}
d_{2m,n}
\frac{(2k+1)^{2n-2m-2p}}
{(2n-2m-2p)!}
=
(-1)^{n+p+k}
\frac{1}{2\pi i}
\oint_{\sin\Gamma}
U_{2k}(x)(\arcsin x)^{2p-1}
\frac{dx}{x^{2n}}.
\end{cases}
\]
By Cauchy's coefficient formula, this is exactly
\eqref{eq:c-sum-to-arcsine-chebyshev} and
\eqref{eq:d-sum-to-arcsine-chebyshev}.
\end{proof}

\begin{proposition}[Chebyshev--arcsine form of the shifted integrals]
\label[proposition]{prop:shifted_integrals_chebyshev_arcsine}
Let \(n\in\mathbb N\), \(0\le k<n\), and let \(U_m\) denote the Chebyshev
polynomial of the second kind, defined by
\[
U_m(\cos\theta)=\frac{\sin((m+1)\theta)}{\sin\theta}.
\]
Then
\begin{equation}\label{eq:shifted-zeta-chebyshev-arcsine}
\int_{0}^{\infty}
\frac{\sinh((2k+1)x)}{x\cosh^{2n+1}x}\,dx
=
\sum_{p=1}^{n}
(2^{2p+1}-1)
[z^{2n}]\,U_{2k}(z)(\arcsin z)^{2p}
\frac{\zeta(2p+1)}{\pi^{2p}},
\end{equation}
and
\begin{equation}\label{eq:shifted-beta-chebyshev-arcsine}
\int_{0}^{\infty}
\frac{\sinh((2k+1)x)}{x\cosh^{2n}x}\,dx
=
\sum_{p=1}^{n}
2^{2p}
[z^{2n-1}]\,U_{2k}(z)(\arcsin z)^{2p-1}
\frac{\beta(2p)}{\pi^{2p-1}}.
\end{equation}
\end{proposition}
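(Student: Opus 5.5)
The plan is to first obtain a zeta/beta expansion of each shifted integral whose coefficients are exactly the $c$- and $d$-sums on the left of \cref{lemma:cd-sums-to-arcsine-chebyshev}, and then apply that lemma to replace them by the Chebyshev--arcsine extractions. This is the contour-evaluation route of \cite{talla_waffo_integral_2025}.

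\textbf{Residue sum.} Write the integrand as even and consider $\frac12\int_{\mathbb R}\frac{\sinh((2k+1)x)}{x\cosh^{N}x}\,dx$ with $N=2n+1$ or $N=2n$. The condition $0\le k<n$ makes the integrand decay like $e^{(2k+1-N)|x|}$, which gives absolute convergence. The singularity at $x=0$ is removable.

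To reach Dirichlet series, I would expand in terms of the poles $x_j=i\pi(j+\tfrac12)$ of $\sech$. The most robust way is to shift the rectangle contour by $i\pi$ and use $\cosh(x+i\pi)=-\cosh x$ and $\sinh((2k+1)(x+i\pi))=-\sinh((2k+1)x)$. Since $x$ itself is not periodic, the factor $1/x$ would be handled by the standard device
\[
\frac1x=\int_0^\infty e^{-sx}\,ds
\]
on the half-line. Alternatively, one can use the Mittag-Leffler expansion of $\frac{\sinh((2k+1)x)}{\cosh^{N}x}$ as a sum of principal parts at the $x_j$. Pairing $j$ with $-1-j$ and integrating each principal part against $1/x$ yields terms proportional to $1/x_j^{r}$.

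\textbf{Reading off the Laurent data.} At $x=x_j+t$ one has
\[
\cosh x=i(-1)^j\sinh t,\qquad \sinh((2k+1)x)=i(-1)^{j+k}\cosh((2k+1)t).
\]
So the pole contributions involve
\[
[t^{N-1-r}]\left(\frac{t}{\sinh t}\right)^{N}\cosh((2k+1)t).
\]
These are precisely the Cauchy products recorded in \eqref{eq:cd-cauchy-product-local}, which are the $c$- and $d$-sums.

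Summing over $j$ produces the Dirichlet series:
\begin{itemize}
\item $\sum_j (2j+1)^{-(2p+1)}=(1-2^{-2p-1})\zeta(2p+1)$ in the odd-$N$ case, where the alternating signs $(-1)^{jN}$ cancel;
\item $\sum_j(-1)^j(2j+1)^{-2p}=\beta(2p)$ in the even-$N$ case, where one sign $(-1)^j$ survives.
\end{itemize}
This parity dichotomy is what separates zeta from beta. The powers of $\pi$ come from $x_j^{-r}=(i\pi(j+\frac12))^{-r}$, producing $2^{2p+1}/\pi^{2p+1}$ or $2^{2p}/\pi^{2p}$ times the normalizations above. Tracking the factor $2\pi i$ then gives the prefactors $(2^{2p+1}-1)/\pi^{2p}$ and $2^{2p}/\pi^{2p-1}$.

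\textbf{Applying the lemma.} Once the expansion reads
\[
\sum_p \pm(2^{2p+1}-1)\,\frac{\zeta(2p+1)}{\pi^{2p}}\sum_{m}c_{2n-2m,n}\frac{(2k+1)^{2m-2p}}{(2m-2p)!},
\]
and its beta analogue, I substitute \eqref{eq:c-sum-to-arcsine-chebyshev} and \eqref{eq:d-sum-to-arcsine-chebyshev}. The sign $(-1)^{n+p+k}$ from the lemma must cancel against the sign produced by the residue computation, namely $i^{\,\cdot}$ factors from $x_j^{-r}$ and $(-1)^{k}$ from $\sinh((2k+1)x_j)$.

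\textbf{Main obstacle.} I expect the hardest part to be the rigorous justification of the series-over-poles step: either the interchange with the Laplace integral, or the vanishing of the contour pieces. A close second is the global sign bookkeeping.

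For the analysis, I would use the finite, absolutely convergent Mittag-Leffler expansion, which is valid because $N\ge 2k+2$. For the signs, I would sanity-check the case $n=1$, $k=0$ against the known value
\[
\int_0^\infty\frac{\tanh x\,\sech^2x}{x}\,dx=\frac{7\zeta(3)}{\pi^2}.
\]
Here $[z^2](\arcsin z)^2=1$, so the formula gives the right value with a plus sign.
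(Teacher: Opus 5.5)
Your plan has the same structure as the paper's proof: an expansion over $\mathbb R$ whose coefficients are the $c$- and $d$-sums, then \cref{lemma:cd-sums-to-arcsine-chebyshev}, sign cancellation, and halving by evenness. The difference is that the paper does not derive that expansion. It quotes it from \cite{talla_waffo_integral_2025}, with $\alpha_{p,k,n}$ equal to $(-1)^{p+n+k}\,2(2^{2p+1}-1)$ times the $c$-sum and $f_{p,k,n}$ equal to $(-1)^{n+k+p}2^{2p+1}$ times the $d$-sum. So the paper's entire proof is only your last step.

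Your Mittag-Leffler derivation is a valid self-contained replacement for that citation. Pairing $x_j$ with $-x_j=x_{-1-j}$ gives
$\int_{\mathbb R}x^{-1}\bigl[(x-x_j)^{-r}-(-x-x_j)^{-r}\bigr]\,dx=-2\pi i(-x_j)^{-r}$.
Carrying the Laurent factors $i^{1-N}(-1)^{j(1-N)+k}$ through this produces exactly the sign $(-1)^{n+k+p}$ and the constants of $\alpha_{p,k,n}$ and $f_{p,k,n}$. Hence the cancellation against the lemma's $(-1)^{n+p+k}$ does occur, and your check $7\zeta(3)/\pi^2$ is consistent with this.

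Two details need fixing.
\begin{enumerate}
\item The coefficient of $(x-x_j)^{-r}$ involves $[t^{N-r}]$, not $[t^{N-1-r}]$, of $(t/\sinh t)^N\cosh((2k+1)t)$. With $r=2p+1$, respectively $r=2p$, this is the $[t^{2n-2p}]$ of \eqref{eq:cd-cauchy-product-local}.
\item Absolute convergence of the Mittag-Leffler series, and the absence of a divergent $\zeta(1)$ contribution after summing over $j$, both require that no simple-pole terms occur. For even $N$ this follows from parity. For odd $N$ you must show that the residue $[t^{2n}](t/\sinh t)^{2n+1}\cosh((2k+1)t)$ vanishes. The substitution $x=\sin z$ from the proof of \cref{lemma:cd-sums-to-arcsine-chebyshev}, run with $p=0$, identifies it with $\pm[x^{2n}]U_{2k}(x)$. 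This is $0$ because $2k<2n$. Alternatively, use $i\pi$-periodicity and decay on a period rectangle.
\end{enumerate}
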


\begin{proof}
We start from the contour evaluations 
\cite{talla_waffo_integral_2025}:
\begin{equation}\label{eq:talla-waffo-shifted-start}
\begin{cases}
\displaystyle
\int_{-\infty}^{\infty}
\frac{\sinh((2k+1)x)}{x\cosh^{2n+1}x}\,dx
=
\sum_{p=1}^{n}
\alpha_{p,k,n}
\frac{\zeta(2p+1)}{\pi^{2p}},
\\[2.4em]
\displaystyle
\int_{-\infty}^{\infty}
\frac{\sinh((2k+1)x)}{x\cosh^{2n}x}\,dx
=
\sum_{p=1}^{n}
f_{p,k,n}
\frac{\beta(2p)}{\pi^{2p-1}},
\end{cases}
\end{equation}
where
\begin{equation}\label{eq:talla-waffo-coefficients}
\begin{cases}
\displaystyle
\alpha_{p,k,n}
=
(-1)^{p+n+k}4^{p+1}
\left(1-\frac1{2^{2p+1}}\right)
\sum_{m=p}^{n}
c_{2n-2m,n}
\frac{(2k+1)^{2m-2p}}{(2m-2p)!},
\\[2.4em]
\displaystyle
f_{p,k,n}
=
(-1)^{n+k+p}2^{2p+1}
\sum_{m=0}^{n-p}
d_{2m,n}
\frac{(2k+1)^{2n-2m-2p}}
{(2n-2m-2p)!}.
\end{cases}
\end{equation}

By \cref{lemma:cd-sums-to-arcsine-chebyshev}, we have
\[
\sum_{m=p}^{n}
c_{2n-2m,n}
\frac{(2k+1)^{2m-2p}}{(2m-2p)!}
=
(-1)^{n+p+k}
[x^{2n}]\,U_{2k}(x)(\arcsin x)^{2p},
\]
and
\[
\sum_{m=0}^{n-p}
d_{2m,n}
\frac{(2k+1)^{2n-2m-2p}}
{(2n-2m-2p)!}
=
(-1)^{n+p+k}
[x^{2n-1}]\,U_{2k}(x)(\arcsin x)^{2p-1}.
\]
Substituting these two identities into \eqref{eq:talla-waffo-coefficients},
all signs cancel. Hence
\[
\alpha_{p,k,n}
=
4^{p+1}
\left(1-\frac1{2^{2p+1}}\right)
[x^{2n}]\,U_{2k}(x)(\arcsin x)^{2p}.
\]
Since
\[
4^{p+1}
\left(1-\frac1{2^{2p+1}}\right)
=
2(2^{2p+1}-1),
\]
we get
\[
\alpha_{p,k,n}
=
2(2^{2p+1}-1)
[x^{2n}]\,U_{2k}(x)(\arcsin x)^{2p}.
\]
Similarly,
\[
f_{p,k,n}
=
2^{2p+1}
[x^{2n-1}]\,U_{2k}(x)(\arcsin x)^{2p-1}.
\]

The integrands in \eqref{eq:talla-waffo-shifted-start} are even. Therefore
the integrals over \((0,\infty)\) are one half of the corresponding
integrals over \(\mathbb R\). Inserting the formulae for
\(\alpha_{p,k,n}\) and \(f_{p,k,n}\), and dividing by \(2\), proves
\eqref{eq:shifted-zeta-chebyshev-arcsine} and
\eqref{eq:shifted-beta-chebyshev-arcsine}.
\end{proof}

\begin{corollary}[Odd powers of \(\sinh x\)]
\label[corollary]{cor:odd-sinh-power-shifted-integrals}
Let \(n\in\mathbb N\) and \(0\le k<n\). Then
\begin{equation}
\label{eq:odd-sinh-zeta-general}
\int_0^\infty
\frac{\sinh^{2k+1}x}{x\cosh^{2n+1}x}\,dx
=
\sum_{p=1}^{n}
(2^{2p+1}-1)
[z^{2n}](z^2-1)^k(\arcsin z)^{2p}
\frac{\zeta(2p+1)}{\pi^{2p}},
\end{equation}
and
\begin{equation}
\label{eq:odd-sinh-beta-general}
\int_0^\infty
\frac{\sinh^{2k+1}x}{x\cosh^{2n}x}\,dx
=
\sum_{p=1}^{n}
2^{2p}[z^{2n-1}](z^2-1)^k(\arcsin z)^{2p-1}
\frac{\beta(2p)}{\pi^{2p-1}}.
\end{equation}
\end{corollary}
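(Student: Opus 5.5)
The plan is to reduce \cref{cor:odd-sinh-power-shifted-integrals} to \cref{prop:shifted_integrals_chebyshev_arcsine} in three steps. First linearize the odd power of \(\sinh x\), then apply the proposition term by term, and finally collapse the resulting Chebyshev combination. Writing \(\sinh x=(e^x-e^{-x})/2\) and pairing the terms \(j\) and \(2k+1-j\) of the binomial expansion gives
\[
\sinh^{2k+1}x=\frac{1}{2^{2k}}\sum_{j=0}^{k}(-1)^j\binom{2k+1}{j}\sinh\bigl((2k+1-2j)x\bigr).
\]
Dividing by \(x\cosh^{2n+1}x\) (resp. \(x\cosh^{2n}x\)) and integrating expresses each target integral as a finite linear combination of the shifted integrals with odd frequency \(2(k-j)+1\). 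Since \(0\le k-j\le k<n\), every one of these is covered by \cref{prop:shifted_integrals_chebyshev_arcsine}. Each integral converges separately, because \(2k+1<2n\) and the integrand is regular at \(x=0\), so splitting the integral is legitimate.

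Inserting \eqref{eq:shifted-zeta-chebyshev-arcsine} (resp. \eqref{eq:shifted-beta-chebyshev-arcsine}) and interchanging the two finite sums, the coefficient of \(\zeta(2p+1)/\pi^{2p}\) becomes
\[
(2^{2p+1}-1)\,[z^{2n}]\,P_k(z)(\arcsin z)^{2p},
\qquad
P_k(z):=\frac{1}{2^{2k}}\sum_{j=0}^{k}(-1)^j\binom{2k+1}{j}U_{2k-2j}(z).
\]
This uses linearity of coefficient extraction, and the beta case is identical with \([z^{2n-1}]\) and \((\arcsin z)^{2p-1}\). The whole corollary therefore reduces to the polynomial identity \(P_k(z)=(z^2-1)^k\).

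To prove that identity, set \(z=\cos\varphi\), so that \(U_{2m}(\cos\varphi)=\sin((2m+1)\varphi)/\sin\varphi\). The trigonometric analogue of the linearization above, obtained from \((2i\sin\varphi)^{2k+1}\), reads
\[
\sin^{2k+1}\varphi=\frac{(-1)^k}{2^{2k}}\sum_{j=0}^{k}(-1)^j\binom{2k+1}{j}\sin\bigl((2k+1-2j)\varphi\bigr).
\]
Dividing by \(\sin\varphi\) gives \(P_k(\cos\varphi)=(-1)^k\sin^{2k}\varphi=(-1)^k(1-z^2)^k=(z^2-1)^k\). Both sides are polynomials agreeing on \([-1,1]\), so they agree identically.

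The only delicate point I expect is the sign bookkeeping. The factor \((-1)^k\) that distinguishes the trigonometric linearization from the hyperbolic one must be exactly what turns \((1-z^2)^k\) into \((z^2-1)^k\). To guard against an error, I would check the case \(k=1\) directly: \(\tfrac14(U_2-3U_0)=\tfrac14(4z^2-1-3)=z^2-1\).
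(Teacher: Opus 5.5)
Your proposal is correct and follows the paper's proof: the same linearization of $\sinh^{2k+1}x$ (your index $j$ is the paper's $k-r$), term-by-term application of \cref{prop:shifted_integrals_chebyshev_arcsine}, and reduction to the polynomial identity $P_k(z)=(z^2-1)^k$. The only cosmetic difference is in proving that identity: you use $z=\cos\varphi$ and the trigonometric linearization, while the paper divides the hyperbolic linearization itself by $\sinh x$ and sets $z=\cosh x$, which yields $(z^2-1)^k$ directly without the $(-1)^k$ you had to track.
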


\begin{proof}
We first linearize the odd power of \(\sinh x\). From
\[
\sinh x=\frac{e^x-e^{-x}}2
\]
one obtains
\[
\sinh^{2k+1}x
=
\frac1{4^k}
\sum_{r=0}^{k}
(-1)^{k-r}
\binom{2k+1}{k-r}
\sinh((2r+1)x).
\]
Since \(0\le r\le k<n\), we may apply
\cref{prop:shifted_integrals_chebyshev_arcsine} to each term. Hence
\[
\int_0^\infty
\frac{\sinh^{2k+1}x}{x\cosh^{2n+1}x}\,dx
=
\sum_{p=1}^{n}
(2^{2p+1}-1)
[z^{2n}]
\left(
\frac1{4^k}
\sum_{r=0}^{k}
(-1)^{k-r}
\binom{2k+1}{k-r}
U_{2r}(z)
\right)
(\arcsin z)^{2p}
\frac{\zeta(2p+1)}{\pi^{2p}},
\]
and similarly
\[
\int_0^\infty
\frac{\sinh^{2k+1}x}{x\cosh^{2n}x}\,dx
=
\sum_{p=1}^{n}
2^{2p}
[z^{2n-1}]
\left(
\frac1{4^k}
\sum_{r=0}^{k}
(-1)^{k-r}
\binom{2k+1}{k-r}
U_{2r}(z)
\right)
(\arcsin z)^{2p-1}
\frac{\beta(2p)}{\pi^{2p-1}}.
\]

It remains to collapse the Chebyshev sum. Since
\[
\sinh((2r+1)x)=\sinh x\,U_{2r}(\cosh x),
\]
the same linearization gives, after division by \(\sinh x\),
\[
\sinh^{2k}x
=
\frac1{4^k}
\sum_{r=0}^{k}
(-1)^{k-r}
\binom{2k+1}{k-r}
U_{2r}(\cosh x).
\]
Putting \(z=\cosh x\), and using
\(\sinh^2x=\cosh^2x-1=z^2-1\), we get the polynomial identity
\[
\frac1{4^k}
\sum_{r=0}^{k}
(-1)^{k-r}
\binom{2k+1}{k-r}
U_{2r}(z)
=
(z^2-1)^k.
\]
Substituting this identity into the two preceding coefficient formulae proves
\eqref{eq:odd-sinh-zeta-general} and \eqref{eq:odd-sinh-beta-general}.
\end{proof}

\begin{corollary}[Integrals involving the reciprocal hyperbolic arctangent]
\label[corollary]{cor:reciprocal-arctanh-integrals}
Let \(n\in\mathbb N\). Then
\begin{equation}
\label{eq:reciprocal-arctanh-zeta}
\int_0^1
\frac{x^{2n-1}}{\operatorname{arctanh}x}\,dx
=
\sum_{p=1}^{n}
\left(2^{2p+1}-1\right)
[z^{2n}]
(z^2-1)^{n-1}(\arcsin z)^{2p}
\frac{\zeta(2p+1)}{\pi^{2p}},
\end{equation}
and
\begin{equation}
\label{eq:reciprocal-arctanh-beta}
\int_0^1
\frac{x^{2n-1}}
{\sqrt{1-x^2}\,\operatorname{arctanh}x}\,dx
=
\sum_{p=1}^{n}
2^{2p}
[z^{2n-1}]
(z^2-1)^{n-1}(\arcsin z)^{2p-1}
\frac{\beta(2p)}{\pi^{2p-1}}.
\end{equation}
\end{corollary}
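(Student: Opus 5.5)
The plan is to substitute $x=\tanh t$ in both integrals and then apply \cref{cor:odd-sinh-power-shifted-integrals} with $k=n-1$. This choice is admissible, since $0\le n-1<n$ for every $n\in\mathbb N$.

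Under $x=\tanh t$, with $t\in(0,\infty)$, we have $dx=\sech^2 t\,dt$ and $\operatorname{arctanh}x=t$.

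\textbf{First integral.}
\[
\frac{x^{2n-1}}{\operatorname{arctanh}x}\,dx
=\frac{\sinh^{2n-1}t}{\cosh^{2n-1}t}\cdot\frac{1}{t}\cdot\frac{dt}{\cosh^2 t}
=\frac{\sinh^{2n-1}t}{t\cosh^{2n+1}t}\,dt .
\]
This is the left-hand side of \eqref{eq:odd-sinh-zeta-general} with $2k+1=2n-1$, that is $k=n-1$. Inserting $k=n-1$ in \eqref{eq:odd-sinh-zeta-general} gives \eqref{eq:reciprocal-arctanh-zeta} verbatim.

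\textbf{Second integral.} We also need $\sqrt{1-x^2}=\sech t$, which holds since $\cosh t>0$. Then
\[
\frac{x^{2n-1}}{\sqrt{1-x^2}\,\operatorname{arctanh}x}\,dx
=\frac{\sinh^{2n-1}t}{\cosh^{2n-1}t}\cdot\cosh t\cdot\frac1t\cdot\frac{dt}{\cosh^2t}
=\frac{\sinh^{2n-1}t}{t\cosh^{2n}t}\,dt .
\]
This is the integrand of \eqref{eq:odd-sinh-beta-general} with $k=n-1$, so that identity gives \eqref{eq:reciprocal-arctanh-beta}.

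\textbf{Convergence.} The only point needing a check is that the improper integrals match. At $x\to0$ the integrands behave like $x^{2n-2}$, and at $x\to1^-$ the factor $1/\operatorname{arctanh}x$ tends to $0$. In the second integral the remaining factor is $(1-x^2)^{-1/2}$, which is integrable. On the $t$-side the integrands decay like $e^{-2t}/t$ and $e^{-t}/t$ respectively, which matches the convergence already assumed in the corollary. Since the substitution is a smooth monotone bijection $(0,\infty)\to(0,1)$, the change of variables holds for these absolutely convergent integrals.

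There is no real obstacle beyond this bookkeeping of the powers of $\cosh t$.
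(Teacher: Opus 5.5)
Your proposal is correct and is essentially the paper's own proof. The paper likewise substitutes $x=\tanh t$ to obtain $\int_0^\infty \sinh^{2n-1}t/(t\cosh^{2n+1}t)\,dt$ and $\int_0^\infty \sinh^{2n-1}t/(t\cosh^{2n}t)\,dt$, then applies \cref{cor:odd-sinh-power-shifted-integrals} with $k=n-1$; your convergence check is a harmless extra that the paper leaves implicit.
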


\begin{proof}
We make the change of variables
\[
x=\tanh t,
\qquad
t=\operatorname{arctanh}x,
\qquad
dx=\frac{dt}{\cosh^2 t}.
\]
This substitution maps the interval \(0<x<1\) onto \(0<t<\infty\).

For the first integral, we obtain
\begin{align*}
\int_0^1
\frac{x^{2n-1}}{\operatorname{arctanh}x}\,dx
&=
\int_0^\infty
\frac{\tanh^{2n-1}t}{t\cosh^2t}\,dt
\\
&=
\int_0^\infty
\frac{\sinh^{2n-1}t}
{t\cosh^{2n+1}t}\,dt.
\end{align*}
Applying \eqref{eq:odd-sinh-zeta-general} with \(k=n-1\)
gives \eqref{eq:reciprocal-arctanh-zeta}.

Furthermore,
\[
\sqrt{1-\tanh^2t}
=
\frac{1}{\cosh t}.
\]
Consequently,
\begin{align*}
\int_0^1
\frac{x^{2n-1}}
{\sqrt{1-x^2}\,\operatorname{arctanh}x}\,dx
&=
\int_0^\infty
\frac{\tanh^{2n-1}t}
{t\cosh t}\,dt
\\
&=
\int_0^\infty
\frac{\sinh^{2n-1}t}
{t\cosh^{2n}t}\,dt.
\end{align*}
Applying \eqref{eq:odd-sinh-beta-general}, again with
\(k=n-1\), yields
\eqref{eq:reciprocal-arctanh-beta}.
\end{proof}

\section{Coefficient Formulae for $\displaystyle\int_{0}^{\infty}
\frac{\sinh(2kx)}{x\cosh^{n}x}\,dx$}\label[section]{sec:coefficient-formulae_double_angle}

\begin{lemma}[Reduction of binomial sums]
\label[lemma]{lem:chebyshev-binomial-collapse}
For every \(k\in\mathbb{N}\) with \(k\geq 1\), one has
\[
\sum_{j=0}^{k-1}
\binom{2k}{2j+1}
z^{2k-2j-1}(z^2-1)^j
=
U_{2k-1}(z),
\]
where \(U_{2k-1}\) denotes the Chebyshev polynomial of the
second kind.
\end{lemma}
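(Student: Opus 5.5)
Since both sides are polynomials in \(z\), it suffices to verify the identity on the interval \(z=\cos\theta\) with \(\theta\in(0,\pi)\); agreement on infinitely many points then forces equality of polynomials. The plan is to recognise the left-hand side as the odd part of a binomial expansion of a power of \(z+\sqrt{z^2-1}\), exactly as in the de Moivre proof of the standard explicit formula for Chebyshev polynomials of the second kind.

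Concretely, put \(w=\sqrt{z^2-1}\) (any fixed branch; for \(z=\cos\theta\) take \(w=i\sin\theta\)). By the binomial theorem,
\[
(z+w)^{2k}-(z-w)^{2k}
=2\sum_{j=0}^{k-1}\binom{2k}{2j+1}z^{2k-2j-1}w^{2j+1},
\]
because the even powers of \(w\) cancel and the odd index \(2j+1\) ranges over \(1,3,\ldots,2k-1\). Since \(w^{2j+1}=w\,(z^2-1)^j\), this gives
\[
\sum_{j=0}^{k-1}\binom{2k}{2j+1}z^{2k-2j-1}(z^2-1)^j
=\frac{(z+w)^{2k}-(z-w)^{2k}}{2w}.
\]
For \(z=\cos\theta\), \(w=i\sin\theta\), de Moivre's formula gives \((z\pm w)^{2k}=e^{\pm 2ik\theta}\). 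The right-hand side therefore equals
\[
\frac{2i\sin(2k\theta)}{2i\sin\theta}=\frac{\sin(2k\theta)}{\sin\theta}=U_{2k-1}(\cos\theta),
\]
by the normalization of \(U_m\) fixed in the Notation paragraph with \(m=2k-1\).

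The only point needing care is the division by \(w\) and the branch choice. I would avoid it by noting that the left-hand side is manifestly a polynomial in \(z\), independent of any branch. The identity is then checked on \(\theta\in(0,\pi)\), where \(\sin\theta\neq0\), and extended to all \(z\in\mathbb{C}\) by the identity theorem for polynomials.

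Alternatively, one could work formally in \(\mathbb{Z}[z][w]/(w^2-z^2+1)\), or induct on \(k\) via the recurrence \(U_{m+1}=2zU_m-U_{m-1}\) applied twice. The de Moivre route is shorter, and I expect no real obstacle beyond this bookkeeping of parity and indices.
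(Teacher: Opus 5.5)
Your proof is correct and is essentially the paper's argument. The paper takes $z=\cosh x$ and expands $\sinh(2kx)$ binomially in $\sinh x$ and $\cosh x$, keeping only the odd terms. It then divides by $\sinh x$, using $U_{2k-1}(\cosh x)=\sinh(2kx)/\sinh x$ and $\sinh^2 x=z^2-1$, and concludes by the polynomial identity principle. Your choice $z=\cos\theta$, $w=i\sin\theta$ is the same computation carried out on the trigonometric side instead of the hyperbolic side.
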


\begin{proof}
Using the binomial expansion of \(\sinh(2kx)\), we obtain
\[
\sinh(2kx)
=
\sum_{j=0}^{k-1}
\binom{2k}{2j+1}
\sinh^{2j+1}(x)\cosh^{2k-2j-1}(x).
\]
Dividing by \(\sinh(x)\) gives
\[
\frac{\sinh(2kx)}{\sinh(x)}
=
\sum_{j=0}^{k-1}
\binom{2k}{2j+1}
\sinh^{2j}(x)\cosh^{2k-2j-1}(x).
\]
Since
\[
U_{2k-1}(\cosh x)
=
\frac{\sinh(2kx)}{\sinh x},
\]
and
\[
\sinh^2(x)=\cosh^2(x)-1,
\]
the substitution \(z=\cosh x\) yields
\[
U_{2k-1}(z)
=
\sum_{j=0}^{k-1}
\binom{2k}{2j+1}
z^{2k-2j-1}(z^2-1)^j.
\]
Both sides are polynomials in \(z\), so the identity holds
identically.
\end{proof}

\begin{proposition}
\label[proposition]{prop:sinh-over-cosh-integral}
Let \(k,m\in\mathbb{N}\) with \(k\geq 1\). Then
\[
\left\{
\begin{aligned}
\int_{0}^{\infty}
\frac{\sinh(2kx)}{x\cosh^{2m}x}\,dx
&=
\sum_{p=1}^{m-1}
\left(2^{2p+1}-1\right)\frac{\zeta(2p+1)}{\pi^{2p}}
[z^{2m-1}]
U_{2k-1}(z)(\arcsin z)^{2p}
,
&& m>k,
\\[1.5em]
\int_{0}^{\infty}
\frac{\sinh(2kx)}{x\cosh^{2m+1}x}\,dx
&=
\sum_{p=1}^{m}
2^{2p}\frac{\beta(2p)}{\pi^{2p-1}}
[z^{2m}]
U_{2k-1}(z)(\arcsin z)^{2p-1}
,
&& m\geq k.
\end{aligned}
\right.
\]
\end{proposition}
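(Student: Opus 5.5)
We expand the numerator binomially, as in \cref{lem:chebyshev-binomial-collapse}:
\[
\sinh(2kx)=\sum_{j=0}^{k-1}\binom{2k}{2j+1}\sinh^{2j+1}x\,\cosh^{2k-2j-1}x .
\]
Dividing by \(x\cosh^{N}x\) turns each term into an odd-$\sinh$-power integral with a lowered denominator exponent, namely
\[
\int_0^\infty\frac{\sinh^{2j+1}x}{x\cosh^{N-2k+2j+1}x}\,dx .
\]
For \(N=2m\) the new exponent is \(2(m-k+j)+1\), which is odd. This is the zeta family \eqref{eq:odd-sinh-zeta-general} with \(n'=m-k+j\) and \(k'=j\). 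For \(N=2m+1\) the exponent is \(2(m-k+j+1)\), which is even. This is the beta family \eqref{eq:odd-sinh-beta-general} with \(n'=m-k+j+1\) and \(k'=j\).

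The hypotheses of \cref{cor:odd-sinh-power-shifted-integrals} need \(0\le k'<n'\). In the zeta case \(j<m-k+j\) is exactly \(m>k\). In the beta case \(j<m-k+j+1\) is exactly \(m\ge k\). This explains the two stated ranges. Convergence at infinity is automatic in these ranges: the integrand decays like \(e^{(2k-N)x}\), with \(2k-N<0\).

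Applying the corollary termwise gives, in the zeta case,
\[
\sum_{j=0}^{k-1}\binom{2k}{2j+1}\sum_{p=1}^{m-k+j}(2^{2p+1}-1)\frac{\zeta(2p+1)}{\pi^{2p}}\,[z^{2(m-k+j)}](z^2-1)^j(\arcsin z)^{2p}.
\]
The next step is to shift the coefficient index:
\[
[z^{2(m-k+j)}]F=[z^{2m-1}]\,z^{2k-2j-1}F .
\]
This puts every term under the single extraction \([z^{2m-1}]\). The beta case is handled the same way: \([z^{2(m-k+j+1)-1}]F=[z^{2m}]\,z^{2k-2j-1}F\).

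Then we extend the inner \(p\)-sum uniformly to \(1\le p\le m-1\) (zeta) or \(1\le p\le m\) (beta) and swap the order of summation. This is the only delicate step. The added terms must vanish, and they do by order at the origin. \((\arcsin z)^{2p}\) has order \(2p\), so \(z^{2k-2j-1}(z^2-1)^j(\arcsin z)^{2p}\) has order at least \(2k-2j-1+2p\). If \(p>m-k+j\), this order is at least \(2m+1>2m-1\), so the coefficient is zero. In the beta case the order is \(2k-2j-1+2p-1\ge 2m+1>2m\) when \(p\ge m-k+j+2\), again zero. The upper limit \(m-1\) in the zeta case is the maximum of \(m-k+j\), attained at \(j=k-1\). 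For larger \(p\), in particular \(p=m\), the order is at least \(2m+1\), so those terms vanish automatically.

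After the swap, the inner \(j\)-sum is
\[
\sum_{j=0}^{k-1}\binom{2k}{2j+1}z^{2k-2j-1}(z^2-1)^j ,
\]
which collapses to \(U_{2k-1}(z)\) by \cref{lem:chebyshev-binomial-collapse}. The extractions are linear, so the sum can be pulled inside. This yields both formulas.

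The main obstacle is just the bookkeeping of index ranges and the vanishing argument; everything else is direct substitution.
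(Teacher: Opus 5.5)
Your proposal is correct and matches the paper's proof step for step. It uses the binomial expansion of \(\sinh(2kx)\), applies \cref{cor:odd-sinh-power-shifted-integrals} term by term, extends the inner \(p\)-sums using the order of \((\arcsin z)^{q}\) at the origin, shifts to a common extraction index via \(z^{2k-2j-1}\), and collapses with \cref{lem:chebyshev-binomial-collapse}. You also check explicitly that \(0\le j<n'\), which recovers exactly the ranges \(m>k\) and \(m\ge k\); this is a small improvement, since the paper only verifies convergence at that point.
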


\begin{proof}
Set
\[
\left\{
\begin{aligned}
I_{k,m}^{\mathrm{e}}
&:=
\int_{0}^{\infty}
\frac{\sinh(2kx)}{x\cosh^{2m}x}\,dx,
\\[0.6em]
I_{k,m}^{\mathrm{o}}
&:=
\int_{0}^{\infty}
\frac{\sinh(2kx)}{x\cosh^{2m+1}x}\,dx.
\end{aligned}
\right.
\]

Near the origin, one has
\[
\left\{
\begin{aligned}
\frac{\sinh(2kx)}{x\cosh^{2m}x}
&=
2k+O(x^2),
\\
\frac{\sinh(2kx)}{x\cosh^{2m+1}x}
&=
2k+O(x^2),
\end{aligned}
\right.
\qquad x\to0.
\]
Moreover, as \(x\to\infty\),
\[
\left\{
\begin{aligned}
\frac{\sinh(2kx)}{x\cosh^{2m}x}
&=
O\left(
\frac{e^{-2(m-k)x}}{x}
\right),
\\[0.8em]
\frac{\sinh(2kx)}{x\cosh^{2m+1}x}
&=
O\left(
\frac{e^{-(2(m-k)+1)x}}{x}
\right).
\end{aligned}
\right.
\]
Consequently,
\[
\left\{
\begin{aligned}
I_{k,m}^{\mathrm{e}}
&\text{ converges absolutely for }m>k,
\\
I_{k,m}^{\mathrm{o}}
&\text{ converges absolutely for }m\geq k.
\end{aligned}
\right.
\]

Using the identity
\[
\sinh(2kx)
=
\sum_{j=0}^{k-1}
\binom{2k}{2j+1}
\sinh^{2j+1}(x)\cosh^{2k-2j-1}(x),
\]
we obtain
\[
\left\{
\begin{aligned}
I_{k,m}^{\mathrm{e}}
&=
\sum_{j=0}^{k-1}
\binom{2k}{2j+1}
\int_0^\infty
\frac{\sinh^{2j+1}(x)}
{x\cosh^{2(m-k+j)+1}(x)}
\,dx,
\\[1.2em]
I_{k,m}^{\mathrm{o}}
&=
\sum_{j=0}^{k-1}
\binom{2k}{2j+1}
\int_0^\infty
\frac{\sinh^{2j+1}(x)}
{x\cosh^{2(m-k+j+1)}(x)}
\,dx.
\end{aligned}
\right.
\]

For brevity, define
\[
a_p
:=
\left(2^{2p+1}-1\right)
\frac{\zeta(2p+1)}{\pi^{2p}},
\qquad
b_p
:=
2^{2p}
\frac{\beta(2p)}{\pi^{2p-1}}.
\]

Applying
\cref{cor:odd-sinh-power-shifted-integrals}
with the respective substitutions
\[
\left\{
\begin{aligned}
(r,\ell)
&=
(m-k+j,j),
\\
(r,\ell)
&=
(m-k+j+1,j),
\end{aligned}
\right.
\]
gives
\[
\left\{
\begin{aligned}
I_{k,m}^{\mathrm{e}}
&=
\sum_{j=0}^{k-1}
\binom{2k}{2j+1}
\sum_{p=1}^{m-k+j}
a_p
[z^{2(m-k+j)}]
\Bigl(
(z^2-1)^j(\arcsin z)^{2p}
\Bigr),
\\[1.2em]
I_{k,m}^{\mathrm{o}}
&=
\sum_{j=0}^{k-1}
\binom{2k}{2j+1}
\sum_{p=1}^{m-k+j+1}
b_p
[z^{2(m-k+j+1)-1}]
\Bigl(
(z^2-1)^j(\arcsin z)^{2p-1}
\Bigr).
\end{aligned}
\right.
\]

Since
\[
\operatorname{ord}_{z=0}(\arcsin z)^q=q,
\]
the relevant coefficients vanish whenever
\[
\left\{
\begin{aligned}
p&>m-k+j,
\\
p&>m-k+j+1,
\end{aligned}
\right.
\]
respectively. Hence, the inner sums may be extended to bounds
independent of \(j\).

Since
\[
\max_{0\leq j\leq k-1}(m-k+j)=m-1
\]
and
\[
\max_{0\leq j\leq k-1}(m-k+j+1)=m,
\]
interchanging the finite sums yields
\[
\left\{
\begin{aligned}
I_{k,m}^{\mathrm{e}}
&=
\sum_{p=1}^{m-1}
a_p
\sum_{j=0}^{k-1}
\binom{2k}{2j+1}
[z^{2(m-k+j)}]
\Bigl(
(z^2-1)^j(\arcsin z)^{2p}
\Bigr),
\\[1.2em]
I_{k,m}^{\mathrm{o}}
&=
\sum_{p=1}^{m}
b_p
\sum_{j=0}^{k-1}
\binom{2k}{2j+1}
[z^{2(m-k+j+1)-1}]
\Bigl(
(z^2-1)^j(\arcsin z)^{2p-1}
\Bigr).
\end{aligned}
\right.
\]

For every formal power series \(F(z)\), one has
\[
[z^r]F(z)
=
[z^{r+s}]
\bigl(z^sF(z)\bigr),
\qquad s\geq0.
\]
Taking
\[
s=2k-2j-1
\]
gives
\[
\left\{
\begin{aligned}
[z^{2(m-k+j)}]F(z)
&=
[z^{2m-1}]
\Bigl(
z^{2k-2j-1}F(z)
\Bigr),
\\[0.8em]
[z^{2(m-k+j+1)-1}]F(z)
&=
[z^{2m}]
\Bigl(
z^{2k-2j-1}F(z)
\Bigr).
\end{aligned}
\right.
\]

By linearity of the coefficient operator, it follows that
\[
\left\{
\begin{aligned}
I_{k,m}^{\mathrm{e}}
&=
\sum_{p=1}^{m-1}
a_p
[z^{2m-1}]
\Biggl(
(\arcsin z)^{2p}
\sum_{j=0}^{k-1}
\binom{2k}{2j+1}
z^{2k-2j-1}(z^2-1)^j
\Biggr),
\\[1.2em]
I_{k,m}^{\mathrm{o}}
&=
\sum_{p=1}^{m}
b_p
[z^{2m}]
\Biggl(
(\arcsin z)^{2p-1}
\sum_{j=0}^{k-1}
\binom{2k}{2j+1}
z^{2k-2j-1}(z^2-1)^j
\Biggr).
\end{aligned}
\right.
\]

By \cref{lem:chebyshev-binomial-collapse},
\[
\sum_{j=0}^{k-1}
\binom{2k}{2j+1}
z^{2k-2j-1}(z^2-1)^j
=
U_{2k-1}(z).
\]
Therefore,
\[
\left\{
\begin{aligned}
I_{k,m}^{\mathrm{e}}
&=
\sum_{p=1}^{m-1}
a_p
[z^{2m-1}]
\Bigl(
U_{2k-1}(z)(\arcsin z)^{2p}
\Bigr),
\\[1.2em]
I_{k,m}^{\mathrm{o}}
&=
\sum_{p=1}^{m}
b_p
[z^{2m}]
\Bigl(
U_{2k-1}(z)(\arcsin z)^{2p-1}
\Bigr).
\end{aligned}
\right.
\]

Substituting the definitions of \(a_p\) and \(b_p\), we finally obtain
\[
\left\{
\begin{aligned}
\int_{0}^{\infty}
\frac{\sinh(2kx)}{x\cosh^{2m}x}\,dx
&=
\sum_{p=1}^{m-1}
\left(2^{2p+1}-1\right)
[z^{2m-1}]
\Bigl(
U_{2k-1}(z)(\arcsin z)^{2p}
\Bigr)
\frac{\zeta(2p+1)}{\pi^{2p}},
\\[1.5em]
\int_{0}^{\infty}
\frac{\sinh(2kx)}{x\cosh^{2m+1}x}\,dx
&=
\sum_{p=1}^{m}
2^{2p}
[z^{2m}]
\Bigl(
U_{2k-1}(z)(\arcsin z)^{2p-1}
\Bigr)
\frac{\beta(2p)}{\pi^{2p-1}}.
\end{aligned}
\right.
\]
This completes the proof.
\end{proof}

\section{Coefficient Formulae for the logarithmic tangent integrals $\displaystyle\int_0^{\pi/4}
\frac{\sin(4nx)}{\ln(\tan x)}\,dx$ and $\displaystyle\int_0^{\pi/4}
\frac{\cos((4n-2)x)}{\ln(\tan x)}\,dx$}\label[section]{sec:coefficient-formulae-logarithmic-tangent-integrals}

\begin{lemma}[Tangent expansions of multiple angles]
\label[lemma]{lemma:tangent-multiple-angle-expansions}
Let \(n\in\mathbb N\). Then
\[
\cos(2nx)
=
\cos^{2n}x
\sum_{k=0}^{n}
(-1)^k
\binom{2n}{2k}
\tan^{2k}x
\]
and
\[
\sin(2nx)
=
\cos^{2n}x
\sum_{k=0}^{n-1}
(-1)^k
\binom{2n}{2k+1}
\tan^{2k+1}x .
\]
\end{lemma}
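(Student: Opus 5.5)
The plan is to derive both identities from de Moivre's formula and the binomial theorem, then split into real and imaginary parts. For real \(x\),
\[
\cos(2nx)+i\sin(2nx)=(\cos x+i\sin x)^{2n}=\cos^{2n}x\,(1+i\tan x)^{2n},
\]
valid whenever \(\cos x\neq 0\). Next we expand \((1+i\tan x)^{2n}=\sum_{j=0}^{2n}\binom{2n}{j}i^j\tan^j x\) and sort the terms by the parity of \(j\). For even \(j=2k\), \(0\le k\le n\), we have \(i^{2k}=(-1)^k\), so these terms are real. For odd \(j=2k+1\), \(0\le k\le n-1\), we have \(i^{2k+1}=(-1)^k i\), so these terms are purely imaginary.

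Comparing real parts gives the cosine formula, and comparing imaginary parts gives the sine formula. The factor \(\cos^{2n}x\) is real, so it passes through both the real-part and imaginary-part operations.

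The only point needing care is that \(\tan x\) is undefined where \(\cos x=0\). There are two ways to handle this.

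\begin{itemize}[label={}]
\item \emph{Cleared denominators.} Read each identity in the form \(\cos^{2n}x\tan^{j}x=\cos^{2n-j}x\sin^{j}x\). Since \(j\le 2n\), the right-hand sides become polynomials in \(\cos x\) and \(\sin x\). Then expand \((\cos x+i\sin x)^{2n}\) directly, with no division at all.
\item \emph{Continuity.} Prove the identities on the dense set where \(\cos x\neq0\), and extend them by continuity of both sides in the cleared form.
\end{itemize}

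I do not expect any real obstacle. The main risk is bookkeeping of the summation ranges: the cosine sum runs over \(k=0,\dots,n\) and the sine sum over \(k=0,\dots,n-1\). These ranges follow from \(2k\le 2n\) and \(2k+1\le 2n\) respectively.
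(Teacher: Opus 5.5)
Your proposal is correct and follows the paper's proof: de Moivre's formula, the binomial expansion, and separating the terms by the parity of $j$ to read off real and imaginary parts. The only difference is ordering. You factor out $\cos^{2n}x$ first, as $(1+i\tan x)^{2n}$, whereas the paper expands $(\cos x+i\sin x)^{2n}$ directly and factors afterward, which is your ``cleared denominators'' variant; your remark about the points where $\cos x=0$ is a small piece of care the paper leaves implicit.
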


\begin{proof}
By de Moivre's formula,
\[
\cos(2nx)+i\sin(2nx)
=
(\cos x+i\sin x)^{2n}.
\]
Expanding the right-hand side gives
\[
(\cos x+i\sin x)^{2n}
=
\sum_{j=0}^{2n}
\binom{2n}{j}
i^j
\cos^{2n-j}x\,\sin^j x.
\]
Taking real parts, only even \(j=2k\) contribute. Hence
\[
\cos(2nx)
=
\sum_{k=0}^{n}
(-1)^k
\binom{2n}{2k}
\cos^{2n-2k}x\,\sin^{2k}x.
\]
Factoring out \(\cos^{2n}x\) yields
\[
\cos(2nx)
=
\cos^{2n}x
\sum_{k=0}^{n}
(-1)^k
\binom{2n}{2k}
\tan^{2k}x.
\]

Taking imaginary parts, only odd \(j=2k+1\) contribute. Therefore
\[
\sin(2nx)
=
\sum_{k=0}^{n-1}
(-1)^k
\binom{2n}{2k+1}
\cos^{2n-2k-1}x\,\sin^{2k+1}x.
\]
Again factoring out \(\cos^{2n}x\) gives
\[
\sin(2nx)
=
\cos^{2n}x
\sum_{k=0}^{n-1}
(-1)^k
\binom{2n}{2k+1}
\tan^{2k+1}x.
\]
\end{proof}

\begin{lemma}[Binomial sums of even Chebyshev polynomials]
\label[lemma]{lemma:even-chebyshev-binomial-sums}
Let \(n\ge 1\). Then the following identities hold in \(\mathbb Z[X]\):
\[
\sum_{k=0}^{n-1}
(-1)^k
\binom{4n}{2n-2k-1}
U_{2k}(X)
=
2^{2n-1}X^{2n-1}U_{2n-1}\!\left(\frac1X\right),
\]
and
\[
\sum_{k=0}^{n-1}
(-1)^k
\binom{4n-2}{2n-2k-2}
U_{2k}(X)
=
2^{2n-2}X^{2n-2}U_{2n-2}\!\left(\frac1X\right).
\]
Here \(U_m\) denotes the Chebyshev polynomial of the second kind.
\end{lemma}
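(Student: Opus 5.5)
The plan is to verify both identities on the arc \(X=\sin\theta\), \(0<\theta<\pi/2\). Since each side is a polynomial in \(X\) (the right-hand sides are polynomials because \(U_{2n-1}\) and \(U_{2n-2}\) have degrees \(2n-1\) and \(2n-2\) and the same parity as their index), agreement on an interval suffices. Put \(\varphi=\theta/2\) throughout.

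\textbf{Left-hand side.} As in the proof of \cref{lemma:cd-sums-to-arcsine-chebyshev}, we have \(U_{2k}(\sin\theta)=(-1)^k\cos((2k+1)\theta)/\cos\theta\). The signs \((-1)^k\) cancel, so the first left-hand side equals
\[
\frac{1}{\cos\theta}\sum_{k=0}^{n-1}\binom{4n}{2n-2k-1}\cos((2k+1)\theta).
\]
To evaluate this cosine sum, expand \((2\cos\varphi)^{4n}=(e^{i\varphi}+e^{-i\varphi})^{4n}\). This gives
\[
(2\cos\varphi)^{4n}=\binom{4n}{2n}+2\sum_{m\ge1}\binom{4n}{2n-m}\cos(2m\varphi).
\]
Replacing \(\varphi\) by \(\varphi+\pi/2\) multiplies \(\cos(2m\varphi)\) by \((-1)^m\) and turns the left side into \((2\sin\varphi)^{4n}\). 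Subtracting the two expansions isolates the odd \(m=2k+1\) terms, with \(2m\varphi=(2k+1)\theta\):
\[
(2\cos\varphi)^{4n}-(2\sin\varphi)^{4n}=4\sum_{k=0}^{n-1}\binom{4n}{2n-2k-1}\cos((2k+1)\theta).
\]
Hence the left-hand side equals \(2^{4n-2}(\cos^{4n}\varphi-\sin^{4n}\varphi)/\cos\theta\).

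\textbf{Right-hand side.} Use the closed form
\[
U_m(y)=\frac{(y+\sqrt{y^2-1})^{m+1}-(y-\sqrt{y^2-1})^{m+1}}{2\sqrt{y^2-1}},
\]
which follows from the defining relation. Homogenizing with \(y=1/X\) gives
\[
X^{2n-1}U_{2n-1}(1/X)=\frac{(1+\sqrt{1-X^2})^{2n}-(1-\sqrt{1-X^2})^{2n}}{2\sqrt{1-X^2}}.
\]
At \(X=\sin\theta\) we have \(\sqrt{1-X^2}=\cos\theta\), \(1+\cos\theta=2\cos^2\varphi\) and \(1-\cos\theta=2\sin^2\varphi\). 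So
\[
2^{2n-1}X^{2n-1}U_{2n-1}(1/X)=2^{2n-1}\cdot\frac{2^{2n}(\cos^{4n}\varphi-\sin^{4n}\varphi)}{2\cos\theta}=2^{4n-2}\,\frac{\cos^{4n}\varphi-\sin^{4n}\varphi}{\cos\theta}.
\]
This matches the left-hand side, proving the first identity.

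\textbf{Second identity.} The argument is the same with \(4n\) replaced by \(4n-2\). The centre of the binomial expansion becomes \(2n-1\), so the odd frequencies \(m=2k+1\) carry coefficients \(\binom{4n-2}{2n-2k-2}\). The right-hand side now has exponent \(2n-1\), giving
\[
2^{2n-2}\cdot\frac{2^{2n-1}(\cos^{4n-2}\varphi-\sin^{4n-2}\varphi)}{2\cos\theta},
\]
while the left-hand side gives \(2^{4n-4}(\cos^{4n-2}\varphi-\sin^{4n-2}\varphi)/\cos\theta\). These agree.

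The main obstacle is bookkeeping rather than a conceptual step. One must check that the odd-frequency extraction (the \(\varphi\mapsto\varphi+\pi/2\) trick) produces exactly the binomial indices \(2n-2k-1\) and \(2n-2k-2\) over the range \(0\le k\le n-1\), and that the powers of \(2\) on the two sides match. Membership in \(\mathbb Z[X]\) is then automatic, since both sides are integer combinations of Chebyshev polynomials or their homogenizations.
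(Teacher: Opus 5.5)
Your proof is correct, and it takes a genuinely different route from the paper's.

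The paper starts from the tangent multiple-angle formulas of \cref{lemma:tangent-multiple-angle-expansions} at $\theta=\arctan x$. It pairs the terms $r=n-k-1$ and $r=n+k$ in the numerators of $\sin(4n\arctan x)$ and $\cos((4n-2)\arctan x)$, then substitutes $x=e^u$, $X=\cosh u$, so that the paired monomials become $-2\sinh u\,U_{2k}(X)$. The right-hand side is then recognized through the angle $\alpha=2\arctan e^u$, for which $\sin\alpha=1/X$.

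Your parametrization is dual to this one. With $X=\sin\theta\in(0,1)$, you handle the $U_{2k}(X)$ by the trigonometric form, and $U_{2n-1}(1/X)$, $U_{2n-2}(1/X)$ (where $1/X>1$) by the Binet-type closed form. Both sides then reduce to the same explicit quantity $2^{4n-2}(\cos^{4n}\varphi-\sin^{4n}\varphi)/\cos\theta$, or its analogue with $4n-2$.

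The paper's derivation explains where the coefficients $\binom{4n}{2n-2k-1}$ and $\binom{4n-2}{2n-2k-2}$ come from: they are the numerators of the same multiple-angle formulas that generate $q_{p,n}$ and $\eta_{p,n}$. The cost is more sign bookkeeping (the pairing, the Gudermannian-type angle, the shift $t=\pi/2-\alpha$).

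Your argument needs only the binomial theorem and the closed form of $U_m$, and it makes the powers of $2$ transparent. It also shows that the two identities are really one: the same computation with $2M$ in place of $4n$ gives, for every $M\geq1$,
\[
\sum_{k=0}^{\lfloor (M-1)/2\rfloor}(-1)^k\binom{2M}{M-2k-1}U_{2k}(X)=2^{M-1}X^{M-1}U_{M-1}\!\left(\frac1X\right).
\]
This is the first identity at $M=2n$ and the second at $M=2n-1$, in the same parity-free spirit as \cref{prop:parity-free-terminal-coefficient}.
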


\begin{proof}
We apply \cref{lemma:tangent-multiple-angle-expansions} with
\(\theta=\arctan x\), first with \(N=2n\) in the sine identity and then
with \(N=2n-1\) in the cosine identity. This gives
\[
\begin{cases}
\displaystyle
\sin(4n\arctan x)
=
\frac{1}{(1+x^2)^{2n}}
\sum_{r=0}^{2n-1}
(-1)^r\binom{4n}{2r+1}x^{2r+1},
\\[1.2em]
\displaystyle
\cos((4n-2)\arctan x)
=
\frac{1}{(1+x^2)^{2n-1}}
\sum_{r=0}^{2n-1}
(-1)^r\binom{4n-2}{2r}x^{2r}.
\end{cases}
\]
Pairing in both sums the terms \(r=n-k-1\) and \(r=n+k\), for
\(k=0,\ldots,n-1\), we obtain
\[
\begin{cases}
\displaystyle
\sin(4n\arctan x)
=
\frac{(-1)^{n-1}x^{2n}}{(1+x^2)^{2n}}
\sum_{k=0}^{n-1}
(-1)^k
\binom{4n}{2n-2k-1}
\left(x^{-2k-1}-x^{2k+1}\right),
\\[1.2em]
\displaystyle
\cos((4n-2)\arctan x)
=
\frac{(-1)^{n-1}x^{2n-1}}{(1+x^2)^{2n-1}}
\sum_{k=0}^{n-1}
(-1)^k
\binom{4n-2}{2n-2k-2}
\left(x^{-2k-1}-x^{2k+1}\right).
\end{cases}
\]

Now put \(x=e^u\) and \(X=\cosh u\). Since
\[
x^{-2k-1}-x^{2k+1}
=
-2\sinh((2k+1)u)
=
-2\sinh u\,U_{2k}(X),
\]
we get
\[
\begin{cases}
\displaystyle
\sin(4n\arctan e^u)
=
\frac{2(-1)^n\sinh u}{(2X)^{2n}}
\sum_{k=0}^{n-1}
(-1)^k
\binom{4n}{2n-2k-1}
U_{2k}(X),
\\[1.2em]
\displaystyle
\cos((4n-2)\arctan e^u)
=
\frac{2(-1)^n\sinh u}{(2X)^{2n-1}}
\sum_{k=0}^{n-1}
(-1)^k
\binom{4n-2}{2n-2k-2}
U_{2k}(X).
\end{cases}
\]

Let \(\alpha=2\arctan e^u\). Then
\[
\sin\alpha=\frac1X,
\qquad
\cos\alpha=-\frac{\sinh u}{X}.
\]
Using \(U_m(\cos t)=\sin((m+1)t)/\sin t\), with
\(t=\pi/2-\alpha\), gives
\[
\begin{cases}
\displaystyle
U_{2n-1}\!\left(\frac1X\right)
=
(-1)^n\frac{X\sin(4n\arctan e^u)}{\sinh u},
\\[1.2em]
\displaystyle
U_{2n-2}\!\left(\frac1X\right)
=
(-1)^n\frac{X\cos((4n-2)\arctan e^u)}{\sinh u}.
\end{cases}
\]
Substituting these into the preceding two identities yields the two claimed
formulae.

Finally, since \(X=\cosh u\) assumes infinitely many values and both sides
are polynomials in \(X\), the identities hold identically.
\end{proof}

\begin{proposition}[Compact coefficient form of the logarithmic tangent integrals]
\label[proposition]{proposition:logtan-integrals-compact}
Let \(U_m\) denote the Chebyshev polynomial of the second kind.  For every
\(n\in\mathbb N\),
\begin{equation}
\label{eq:sin-logtan-compact}
\int_0^{\pi/4}
\frac{\sin(4nx)}{\ln(\tan x)}\,dx
=
\frac{(-1)^n}{2}
\sum_{p=1}^{n}
(2^{2p+1}-1)
[x]\,
U_{2n-1}\!\left(\frac1x\right)(\arcsin x)^{2p}
\frac{\zeta(2p+1)}{\pi^{2p}},
\end{equation}
and
\begin{equation}
\label{eq:cos-logtan-compact}
\int_0^{\pi/4}
\frac{\cos((4n-2)x)}{\ln(\tan x)}\,dx
=
(-1)^n
\sum_{p=1}^{n}
2^{2p-1}
[x]\,
U_{2n-2}\!\left(\frac1x\right)(\arcsin x)^{2p-1}
\frac{\beta(2p)}{\pi^{2p-1}}.
\end{equation}
Here \([x]\) denotes the coefficient of \(x\) in the Laurent expansion at
\(x=0\).
\end{proposition}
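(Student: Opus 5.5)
Substituting $x=\arctan(e^{-t})$ turns the logarithmic tangent integrals into hyperbolic integrals with sech-power denominators, which Proposition~\ref{prop:shifted_integrals_chebyshev_arcsine} evaluates; the Chebyshev sums that appear are then collapsed with Lemma~\ref{lemma:even-chebyshev-binomial-sums}. The factors of $\frac12$ and $(-1)^n$ come from the Jacobian and from the sign bookkeeping in that lemma.

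\textbf{Step 1 (hyperbolic form).} Put $\tan x=e^{-t}$. Then $\ln\tan x=-t$, $dx=-\frac{dt}{2\cosh t}$, and $x\in(0,\pi/4)$ corresponds to $t\in(\infty,0)$. This gives
\[
\int_0^{\pi/4}\frac{\sin(4nx)}{\ln\tan x}\,dx=-\frac12\int_0^\infty\frac{\sin(4n\arctan e^{-t})}{t\cosh t}\,dt .
\]
The cosine integral transforms the same way.

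\textbf{Step 2 (rewrite the numerator).} The proof of Lemma~\ref{lemma:even-chebyshev-binomial-sums} gives $\sin(4n\arctan e^{u})$ as
\[
\frac{2(-1)^n\sinh u}{(2\cosh u)^{2n}}\sum_{k=0}^{n-1}(-1)^k\binom{4n}{2n-2k-1}U_{2k}(\cosh u).
\]
Take $u=-t$ and use $\sinh u\,U_{2k}(\cosh u)=\sinh((2k+1)u)$. Then
\[
\frac{\sin(4n\arctan e^{-t})}{t\cosh t}=\frac{2(-1)^{n+1}}{4^n}\sum_{k=0}^{n-1}(-1)^k\binom{4n}{2n-2k-1}\frac{\sinh((2k+1)t)}{t\cosh^{2n+1}t}.
\]
Every index satisfies $k<n$, so equation~\eqref{eq:shifted-zeta-chebyshev-arcsine} applies term by term. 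Exchanging the finite sums over $k$ and $p$, the coefficient of $\zeta(2p+1)/\pi^{2p}$ is
\[
(-1)^n4^{-n}(2^{2p+1}-1)\,[z^{2n}]\Big(\sum_k(-1)^k\binom{4n}{2n-2k-1}U_{2k}(z)\Big)(\arcsin z)^{2p}.
\]
The cosine case is identical. Its denominator is $\cosh^{2n}t$, so equation~\eqref{eq:shifted-beta-chebyshev-arcsine} applies, with the binomials $\binom{4n-2}{2n-2k-2}$ and the factor $(2\cosh)^{2n-1}$.

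\textbf{Step 3 (collapse and reindex).} Lemma~\ref{lemma:even-chebyshev-binomial-sums} replaces the Chebyshev sum by $2^{2n-1}z^{2n-1}U_{2n-1}(1/z)$. Shifting the coefficient index then gives
\[
[z^{2n}]\,z^{2n-1}U_{2n-1}(1/z)F(z)=[z]\,U_{2n-1}(1/z)F(z).
\]
This is legitimate in the Laurent sense because $z^{2n-1}U_{2n-1}(1/z)$ is a polynomial, so the product with the power series $F$ is a genuine Laurent series.

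The constants then combine as follows. In the sine case,
\[
-\tfrac12\cdot 2(-1)^{n+1}4^{-n}\cdot 2^{2n-1}=\tfrac{(-1)^n}{2},
\]
which matches~\eqref{eq:sin-logtan-compact}. In the cosine case, $-\tfrac12\cdot 2(-1)^{n+1}2^{-(2n-1)}\cdot 2^{2n-2}\cdot 2^{2p}$ gives $(-1)^n2^{2p-1}$, which matches~\eqref{eq:cos-logtan-compact}.

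\textbf{Main obstacle.} The delicate part is the sign and orientation bookkeeping: the direction of the substitution ($e^{-t}$ rather than $e^{t}$), the $(-1)^n$ inherited from the pairing $r\leftrightarrow n\pm k$ in Lemma~\ref{lemma:even-chebyshev-binomial-sums}, and the parity of $\sinh u$. I would check the final sign against $n=1$, where $\int_0^{\pi/4}\sin(4x)/\ln\tan x\,dx$ has a known sign (it is negative, since the integrand is negative on $(0,\pi/4)$). The convergence of each shifted integral is already covered by the hypothesis $k<n$ of Proposition~\ref{prop:shifted_integrals_chebyshev_arcsine}.
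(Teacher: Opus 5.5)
Your proof is correct, and it follows a genuinely different route from the paper's to the same intermediate formula \eqref{eq:logtan-coefficients-arcsine-step}.

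The paper never transforms the integrals. It quotes the logarithmic tangent evaluations of \cite{talla_waffo_integral_2025}, whose coefficients $q_{p,n}$ and $\eta_{p,n}$ are written through the $c$- and $d$-sums, and converts those inner sums with \cref{lemma:cd-sums-to-arcsine-chebyshev}.

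You instead substitute $\tan x=e^{-t}$ and take the numerator identity from inside the proof of \cref{lemma:even-chebyshev-binomial-sums}. This rewrites each logarithmic tangent integral as a finite combination of the shifted integrals $\int_0^\infty \sinh((2k+1)t)/(t\cosh^{2n+1}t)\,dt$, respectively $\int_0^\infty \sinh((2k+1)t)/(t\cosh^{2n}t)\,dt$, with $0\le k<n$. You then apply \cref{prop:shifted_integrals_chebyshev_arcsine}. Your constants $-\tfrac12\cdot 2(-1)^{n+1}4^{-n}$ and $-\tfrac12\cdot 2(-1)^{n+1}2^{-(2n-1)}2^{2p}$ reproduce exactly the paper's expressions for $q_{p,n}$ and $\eta_{p,n}$ after \cref{lemma:cd-sums-to-arcsine-chebyshev}. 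From that point the collapse by \cref{lemma:even-chebyshev-binomial-sums} and the shift $[z^{2n}]\,z^{2n-1}G(z)=[z]\,G(z)$ are identical in both proofs.

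What your route gains:
\begin{itemize}
\item It needs less external input. The proposition then rests only on the shifted hyperbolic evaluations already behind \cref{prop:shifted_integrals_chebyshev_arcsine}, not on a separate log-tangent evaluation.
\item It explains where the binomial Chebyshev sums of \cref{lemma:even-chebyshev-binomial-sums} come from: they are simply the expansions of $\sin(4n\arctan e^{-t})$ and $\cos((4n-2)\arctan e^{-t})$.
\end{itemize}

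What the paper's route gains is brevity and a direct link to the published coefficients $q_{p,n},\eta_{p,n}$, which it later uses in \cref{rem:parity-free-terminal-coefficient}.

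The points you leave implicit are routine. Both logarithmic tangent integrands extend continuously to $[0,\pi/4]$, since $\sin(n\pi)=\cos((2n-1)\pi/2)=0$, so the change of variables is legitimate. Splitting into the $n$ shifted integrals is justified because each converges absolutely for $k<n$.
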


\begin{proof}
We start from the logarithmic tangent evaluations
\cite{talla_waffo_integral_2025}:
\begin{equation}\label{eq:talla-waffo-logtan-start}
\begin{cases}
\displaystyle
\int_0^{\pi/4}
\frac{\sin(4nx)}{\ln(\tan x)}\,dx
=
\sum_{p=1}^{n}
q_{p,n}\frac{\zeta(2p+1)}{\pi^{2p}},
\\[2.4em]
\displaystyle
\int_0^{\pi/4}
\frac{\cos((4n-2)x)}{\ln(\tan x)}\,dx
=
\sum_{p=1}^{n}
\eta_{p,n}\frac{\beta(2p)}{\pi^{2p-1}},
\end{cases}
\end{equation}
where
\begin{equation}\label{eq:talla-waffo-logtan-coefficients}
\begin{cases}
\displaystyle
q_{p,n}
=
\frac{1}{4^n}
\sum_{k=0}^{n-1}
\binom{4n}{2n-2k-1}
\sum_{m=p}^{n}
(-1)^p c_{2n-2m,n}
\frac{(2^{2p+1}-1)(2k+1)^{2m-2p}}{(2m-2p)!},
\\[3.0em]
\displaystyle
\eta_{p,n}
=
\frac{(-1)^p}{2^{2n-2p-1}}
\sum_{m=0}^{n-p}
\frac{d_{2m,n}}{(2n-2m-2p)!}
\sum_{k=0}^{n-1}
\binom{4n-2}{2n-2k-2}
(2k+1)^{2n-2m-2p}.
\end{cases}
\end{equation}

By \cref{lemma:cd-sums-to-arcsine-chebyshev}, this gives
\begin{equation}\label{eq:logtan-coefficients-arcsine-step}
\begin{cases}
\displaystyle
q_{p,n}
=
(2^{2p+1}-1)
[x^{2n}]
\left\{
\frac{(-1)^n}{4^n}
\sum_{k=0}^{n-1}
(-1)^k
\binom{4n}{2n-2k-1}
U_{2k}(x)
\right\}
(\arcsin x)^{2p},
\\[3.0em]
\displaystyle
\eta_{p,n}
=
2^{2p}
[x^{2n-1}]
\left\{
\frac{(-1)^n}{2^{2n-1}}
\sum_{k=0}^{n-1}
(-1)^k
\binom{4n-2}{2n-2k-2}
U_{2k}(x)
\right\}
(\arcsin x)^{2p-1}.
\end{cases}
\end{equation}

Using \cref{lemma:even-chebyshev-binomial-sums}, we obtain
\begin{equation}\label{eq:logtan-coefficients-collapsed-step}
\begin{cases}
\displaystyle
q_{p,n}
=
\frac{(-1)^n}{2}
(2^{2p+1}-1)
[x^{2n}]
x^{2n-1}
U_{2n-1}\!\left(\frac1x\right)
(\arcsin x)^{2p},
\\[3.0em]
\displaystyle
\eta_{p,n}
=
(-1)^n2^{2p-1}
[x^{2n-1}]
x^{2n-2}
U_{2n-2}\!\left(\frac1x\right)
(\arcsin x)^{2p-1}.
\end{cases}
\end{equation}

For every Laurent series \(F(x)\) and every integer \(r\), we use the
coefficient shift
\[
[x^r]\,x^{r-1}F(x)=[x]\,F(x).
\]
Hence
\begin{equation}\label{eq:logtan-coefficients-compact-step}
\begin{cases}
\displaystyle
q_{p,n}
=
\frac{(-1)^n}{2}
(2^{2p+1}-1)
[x]\,
U_{2n-1}\!\left(\frac1x\right)
(\arcsin x)^{2p},
\\[3.0em]
\displaystyle
\eta_{p,n}
=
(-1)^n2^{2p-1}
[x]\,
U_{2n-2}\!\left(\frac1x\right)
(\arcsin x)^{2p-1}.
\end{cases}
\end{equation}

Substituting \eqref{eq:logtan-coefficients-compact-step} into
\eqref{eq:talla-waffo-logtan-start} proves
\eqref{eq:sin-logtan-compact} and \eqref{eq:cos-logtan-compact}.
\end{proof}

\begin{remark}[The first beta coefficient]
\label[remark]{rem:first-beta-coefficient}
The first three cases of \eqref{eq:cos-logtan-compact} are
\begin{align}
\int_0^{\pi/4}
\frac{\cos(2x)}{\ln(\tan x)}\,dx
&=
-\frac{2\beta(2)}{\pi},
\label{eq:cos2x-logtan}
\\
\int_0^{\pi/4}
\frac{\cos(6x)}{\ln(\tan x)}\,dx
&=
-\frac{2\beta(2)}{3\pi}
+
\frac{32\beta(4)}{\pi^3},
\label{eq:cos6x-logtan}
\\
\int_0^{\pi/4}
\frac{\cos(10x)}{\ln(\tan x)}\,dx
&=
-\frac{2\beta(2)}{5\pi}
+
\frac{32\beta(4)}{\pi^3}
-
\frac{512\beta(6)}{\pi^5}.
\label{eq:cos10x-logtan}
\end{align}
All three evaluations already appear in
\cite{talla_waffo_integral_2025}.

The first three examples exhibit a particularly simple pattern for the
coefficient of \(\beta(2)/\pi\). Indeed, the term corresponding to
\(p=1\) in \eqref{eq:cos-logtan-compact} is
\[
(-1)^n
2[x]\,
U_{2n-2}\!\left(\frac1x\right)\arcsin x
\frac{\beta(2)}{\pi},
\]
and one has
\begin{equation}
\label{eq:first-chebyshev-arcsin-coefficient}
[x]\,
U_{2n-2}\!\left(\frac1x\right)\arcsin x
=
\frac{(-1)^{n-1}}{2n-1}.
\end{equation}
Consequently, the coefficient of \(\beta(2)/\pi\) is
\[
-\frac{2}{2n-1}.
\]

A proof of this coefficient is already given
in \cite{talla_waffo_integral_2025}. In
\cref{prop:first-chebyshev-arcsin-coefficient}, we provide a different
proof based directly on the new coefficient-extraction formula
\eqref{eq:cos-logtan-compact}.
\end{remark}

\begin{proposition}[The first Chebyshev--arcsine coefficient]
\label[proposition]{prop:first-chebyshev-arcsin-coefficient}
For every integer \(n\geq1\),
\begin{equation}
\label{eq:first-chebyshev-arcsin-coefficient}
\boxed{
[x]\,
U_{2n-2}\!\left(\frac1x\right)\arcsin x
=
\frac{(-1)^{n-1}}{2n-1}
}.
\end{equation}
\end{proposition}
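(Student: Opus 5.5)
We will compute the coefficient of \(x\) directly by expanding \(U_{2n-2}(1/x)\) as a Laurent polynomial in \(x\) and pairing it with the Taylor series of \(\arcsin x\).

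Write \(N=n-1\). Since \(U_{2N}\) is even, it has the explicit expansion
\[
U_{2N}(y)=\sum_{j=0}^{N}(-1)^j\binom{2N-j}{j}(2y)^{2N-2j}.
\]
Hence \(U_{2N}(1/x)=\sum_{j}(-1)^j\binom{2N-j}{j}2^{2N-2j}x^{-(2N-2j)}\). We also use
\[
\arcsin x=\sum_{i\ge0}\frac{\binom{2i}{i}}{4^i(2i+1)}x^{2i+1}.
\]
To produce \(x^1\) we need \(2i+1-(2N-2j)=1\), that is \(i=N-j\). This gives the finite sum
\[
S_N:=[x]\,U_{2N}(1/x)\arcsin x=\sum_{j=0}^{N}(-1)^j\binom{2N-j}{j}\frac{\binom{2N-2j}{N-j}}{2N-2j+1},
\]
in which the powers of \(2\) cancel, since \(2^{2N-2j}/4^{N-j}=1\). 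The target is \(S_N=(-1)^N/(2N+1)\). A sanity check: for \(N=0\) we get \(1\), and for \(N=1\) we get \(\tfrac{2}{3}-1=-\tfrac13\).

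The main obstacle is evaluating this alternating binomial sum. Rather than using WZ methods, we would evaluate it by a contour/integral representation that mirrors the paper's style. The idea is to avoid expanding \(U\) and instead write
\[
[x]F(x)=\frac{1}{2\pi i}\oint F(x)\,\frac{dx}{x^2}.
\]
With \(F(x)=U_{2N}(1/x)\arcsin x\), substitute \(x=\sin z\) as in \cref{lemma:cd-sums-to-arcsine-chebyshev}. The integrand becomes \(U_{2N}(1/\sin z)\,z\cos z\,dz/\sin^2 z\) around \(z=0\).

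The product \(U_{2N}(1/\sin z)\,\cos z/\sin^2 z\) is awkward, so a cleaner route is integration by parts on the coefficient. Since \([x]G=[x^0]G'\) for any Laurent series \(G\) has no content here, we instead note
\[
[x^{-1}]\frac{d}{dx}\bigl(H(x)\arcsin x\bigr)=0,
\]
where \(H\) is an antiderivative. Concretely, let \(V(x)\) be the Laurent polynomial with \(V'(x)=U_{2N}(1/x)/x^2\); this is possible because \(U_{2N}(1/x)/x^2\) has no \(x^{-1}\) term, as all its powers are even. Then
\[
[x]\,U_{2N}(1/x)\arcsin x=[x^{-1}]\,V'(x)\arcsin x=-[x^{-1}]\,\frac{V(x)}{\sqrt{1-x^2}}.
\]
Writing \(y=1/x\), we have \(V(x)=-\int U_{2N}(y)\,dy\), which is an odd polynomial in \(y=1/x\). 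The \(x^{-1}\) coefficient of \(V(x)(1-x^2)^{-1/2}\) is then the \(z^{-1}\)-residue after substituting \(x=\sin\theta\), namely \(\frac{1}{2\pi i}\oint V(\sin\theta)\,d\theta\). For an odd polynomial \(P\) in \(1/\sin\theta\), the residue at \(0\) of \(P(1/\sin\theta)\) is computable from \(\csc^{2r+1}\) residues.

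It is nicer to pass to \(\cos\): take \(y=\cos t\), so that \(\int U_{2N}(y)\,dy=-\int\sin((2N+1)t)\,dt=\cos((2N+1)t)/(2N+1)\). This yields \(V=-T_{2N+1}(1/x)/(2N+1)\), up to a constant; the constant is irrelevant since it only contributes even powers, and in any case \(T_{2N+1}\) is odd. So
\[
S_N=\frac{1}{2N+1}\,[x^{-1}]\,\frac{T_{2N+1}(1/x)}{\sqrt{1-x^2}}.
\]
Substituting \(x=\sin\theta\) turns the residue into \([\theta^{-1}]\,T_{2N+1}(\csc\theta)\). Near \(\theta=\pi/2\)-type reflections, \(T_{2N+1}(\csc\theta)\) is odd in \(\theta\) and meromorphic with its only pole in the strip at \(0\). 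Its residue can be computed by integrating over the boundary of the period rectangle of width \(2\pi\), reducing to behaviour as \(\operatorname{Im}\theta\to\pm\infty\), where \(\csc\theta\to0\) and \(T_{2N+1}(0)=(-1)^N\).

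The main obstacle is exactly this final residue evaluation. The function has poles at both \(0\) and \(\pi\), so we would take the rectangle of width \(\pi\) and use \(T_{2N+1}(\csc(\theta+\pi))=-T_{2N+1}(\csc\theta)\). Carrying this out should give \([\theta^{-1}]\,T_{2N+1}(\csc\theta)=(-1)^N\), and hence \(S_N=(-1)^N/(2N+1)\). Replacing \(N=n-1\) this is \((-1)^{n-1}/(2n-1)\), as claimed.
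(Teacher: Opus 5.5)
\paragraph{Verdict: a genuine gap at the final residue.} Your reduction is correct up to the last step. The integration by parts against $V$ with $V'(x)=U_{2N}(1/x)/x^2$, the identification $V=-T_{2N+1}(1/x)/(2N+1)$ up to a harmless constant, and the substitution $x=\sin\theta$ do give
\[
S_N=\frac{1}{2N+1}\operatorname{Res}_{\theta=0}T_{2N+1}(\csc\theta).
\]
The gap is that you never evaluate this residue, which you yourself call the main obstacle, and the mechanism you sketch for it does not work.

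First, $T_{2N+1}$ is an odd polynomial, so $T_{2N+1}(0)=0$, not $(-1)^N$; that value is $U_{2N}(0)$. Hence the horizontal sides of any period rectangle contribute nothing as $|\operatorname{Im}\theta|\to\infty$, and ``the behaviour at $\pm i\infty$'' carries no information.

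Second, the rectangle bookkeeping does not close by itself. On a rectangle of width $2\pi$ the vertical sides cancel by periodicity, but the enclosed residues at $0$ and $\pi$ are $R$ and $-R$, so you only obtain $0=0$. On the rectangle $-\pi/2\le\operatorname{Re}\theta\le\pi/2$, the antiperiodicity $f(\theta+\pi)=-f(\theta)$ makes the two vertical sides \emph{add} rather than cancel. That leaves
\[
R=\frac1\pi\int_{\mathbb R}T_{2N+1}(\operatorname{sech}s)\,ds=\frac2\pi\int_0^{\pi/2}\frac{\cos((2N+1)\phi)}{\cos\phi}\,d\phi ,
\]
a real integral that still has to be computed. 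It does equal $(-1)^N$, via $\cos((2N+1)\phi)/\cos\phi=(-1)^N\bigl(1+2\sum_{k=1}^{N}(-1)^k\cos 2k\phi\bigr)$, but that computation is absent from your proposal. So the decisive identity $\operatorname{Res}_{\theta=0}T_{2N+1}(\csc\theta)=(-1)^N$ is asserted, not proved.

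\paragraph{Repair and comparison with the paper.} The quickest repair is the substitution the paper uses: set $w=\tan(\theta/2)$, i.e.\ $1/x=(w+w^{-1})/2$. Then $\csc\theta=(w+w^{-1})/2$, $T_{2N+1}(\csc\theta)=\tfrac12\bigl(w^{2N+1}+w^{-2N-1}\bigr)$ and $d\theta=2\,dw/(1+w^2)$. The residue is therefore $[w^{2N}](1+w^2)^{-1}=(-1)^N$.

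With this step added, your argument becomes a valid alternative to the paper's. The paper applies the same Joukowski substitution directly to $[x]\,U_{2n-2}(1/x)\arcsin x$. It needs the germ identity $\arcsin\bigl(2w/(1+w^2)\bigr)=2\arctan w$ to reach $[w^{2n-1}]\arctan w$. Your integration by parts instead trades $\arcsin x$ for $(1-x^2)^{-1/2}$ and $U_{2N}$ for $T_{2N+1}/(2N+1)$. After that, the substitution linearizes everything and no arctangent identity is needed.
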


\begin{proof} Let \(C_\rho=\{|z|=\rho\}\) be a sufficiently small positively oriented circle around the origin. By Cauchy's coefficient formula for Laurent series, \[ [x]\, U_{2n-2}\!\left(\frac1x\right)\arcsin x = \frac{1}{2\pi i} \oint_{C_\rho} \frac{\arcsin z}{z^2} U_{2n-2}\!\left(\frac1z\right)\,dz. \] We make the local biholomorphic substitution \[ \frac1z=\frac{w+w^{-1}}2, \qquad\text{equivalently}\qquad z=\frac{2w}{1+w^2}. \] Then \[ dz=\frac{2(1-w^2)}{(1+w^2)^2}\,dw, \qquad \frac{dz}{z^2} = \frac{1-w^2}{2w^2}\,dw. \] Hence \[ [x]\, U_{2n-2}\!\left(\frac1x\right)\arcsin x = \frac{1}{2\pi i} \oint_{\Gamma_\rho} \arcsin\!\left(\frac{2w}{1+w^2}\right) U_{2n-2}\!\left(\frac{w+w^{-1}}2\right) \frac{1-w^2}{2w^2}\,dw, \] where \(\Gamma_\rho\) is a sufficiently small positively oriented contour around the origin. Using \[ \frac{w+w^{-1}}2=\cosh(\ln w) \quad \text{and} \quad U_m(\cosh \xi) = \frac{\sinh((m+1)\xi)}{\sinh \xi}, \] we obtain \[ \begin{aligned} U_{2n-2}\!\left(\frac{w+w^{-1}}2\right) &= U_{2n-2}\!\bigl(\cosh(\ln w)\bigr)\\ &= \frac{\sinh\!\bigl((2n-1)\ln w\bigr)} {\sinh(\ln w)}\\ &= \frac{e^{(2n-1)\ln w}-e^{-(2n-1)\ln w}} {e^{\ln w}-e^{-\ln w}}\\ &= \frac{w^{2n-1}-w^{-(2n-1)}}{w-w^{-1}}. \end{aligned} \] Therefore, \[ \begin{aligned} [x]\, U_{2n-2}\!\left(\frac1x\right)\arcsin x &= \frac{1}{2\pi i} \oint_{\Gamma_\rho} \arcsin\!\left(\frac{2w}{1+w^2}\right) \frac{w^{2n-1}-w^{-(2n-1)}}{w-w^{-1}} \frac{1-w^2}{2w^2}\,dw\\ &= \frac{1}{2\pi i} \oint_{\Gamma_\rho} \frac12\left(w^{-2n}-w^{2n-2}\right) \arcsin\!\left(\frac{2w}{1+w^2}\right)\,dw. \end{aligned} \] Since, as an identity of holomorphic germs at the origin, \[ \arcsin\!\left(\frac{2w}{1+w^2}\right) = 2\arctan w, \] it follows that \[ [x]\, U_{2n-2}\!\left(\frac1x\right)\arcsin x = \frac{1}{2\pi i} \oint_{\Gamma_\rho} \left(w^{-2n}-w^{2n-2}\right)\arctan w\,dw. \] Now \[ \arctan w = \sum_{k\geq0} \frac{(-1)^k}{2k+1}w^{2k+1}. \] The term \(w^{2n-2}\arctan w\) is holomorphic at the origin, while \[ \operatorname*{Res}_{w=0} \left(w^{-2n}\arctan w\right) = [w^{2n-1}]\arctan w = \frac{(-1)^{n-1}}{2n-1}. \] Consequently, \[ [x]\, U_{2n-2}\!\left(\frac1x\right)\arcsin x = \frac{(-1)^{n-1}}{2n-1}. \] \end{proof}

\begin{remark}[A parity-free formulation]
\label[remark]{rem:parity-free-terminal-coefficient}
The terminal coefficients in the sine and cosine expansions were already
determined in \cite{talla_waffo_integral_2025}. More precisely, one has
\begin{equation}
\label{eq:previous-terminal-coefficients}
q_{n,n}
=
(-1)^n
\bigl(2^{2n+1}-1\bigr)4^{n-1},
\qquad
\eta_{n,n}
=
(-1)^n2^{4n-3}.
\end{equation}

We now compare these formulae with the coefficient-extraction identities
\eqref{eq:sin-logtan-compact} and \eqref{eq:cos-logtan-compact}.

The coefficient of
\[
\frac{\zeta(2n+1)}{\pi^{2n}}
\]
in \eqref{eq:sin-logtan-compact}, obtained by taking \(p=n\), is
\[
\frac{(-1)^n}{2}
\bigl(2^{2n+1}-1\bigr)
[x]\,
U_{2n-1}\!\left(\frac1x\right)
(\arcsin x)^{2n}.
\]
Comparing this expression with \(q_{n,n}\) in
\eqref{eq:previous-terminal-coefficients} gives
\begin{equation}
\label{eq:terminal-sine-coefficient-extraction}
[x]\,
U_{2n-1}\!\left(\frac1x\right)
(\arcsin x)^{2n}
=
2^{2n-1}.
\end{equation}

Similarly, the coefficient of
\[
\frac{\beta(2n)}{\pi^{2n-1}}
\]
in \eqref{eq:cos-logtan-compact} is
\[
(-1)^n2^{2n-1}
[x]\,
U_{2n-2}\!\left(\frac1x\right)
(\arcsin x)^{2n-1}.
\]
Comparison with \(\eta_{n,n}\) in
\eqref{eq:previous-terminal-coefficients} yields
\begin{equation}
\label{eq:terminal-cosine-coefficient-extraction}
[x]\,
U_{2n-2}\!\left(\frac1x\right)
(\arcsin x)^{2n-1}
=
2^{2n-2}.
\end{equation}

The two identities
\eqref{eq:terminal-sine-coefficient-extraction} and
\eqref{eq:terminal-cosine-coefficient-extraction} admit a single
formulation that is independent of parity. Namely, setting \(r=2n\) in
\eqref{eq:terminal-sine-coefficient-extraction} and \(r=2n-1\) in
\eqref{eq:terminal-cosine-coefficient-extraction}, respectively, gives
\begin{equation}
\label{eq:parity-free-terminal-coefficient}
[x]\,
U_{r-1}\!\left(\frac1x\right)
(\arcsin x)^r
=
2^{r-1},
\qquad r\geq1.
\end{equation}

Thus the terminal coefficients in both
\eqref{eq:sin-logtan-compact} and
\eqref{eq:cos-logtan-compact} are governed by the same
parity-independent coefficient-extraction identity.

Although \eqref{eq:parity-free-terminal-coefficient} follows here from
the terminal coefficient formulae established in
\cite{talla_waffo_integral_2025}, in
\cref{prop:parity-free-terminal-coefficient} we give a new direct proof
based solely on the coefficient-extraction representation. This single
argument proves simultaneously both
\eqref{eq:terminal-sine-coefficient-extraction} and
\eqref{eq:terminal-cosine-coefficient-extraction}.
\end{remark}

\begin{proposition}[The parity-free terminal coefficient]
\label[proposition]{prop:parity-free-terminal-coefficient}
For every integer \(r\geq1\),
\begin{equation}
\label{eq:parity-free-terminal-coefficient-proposition}
\boxed{
[x]\,
U_{r-1}\!\left(\frac1x\right)
(\arcsin x)^r
=
2^{r-1}
}.
\end{equation}
\end{proposition}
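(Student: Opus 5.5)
The plan is to repeat the contour argument from the proof of \cref{prop:first-chebyshev-arcsin-coefficient} with general $r$; the only change is that the power $(\arcsin x)^r$ now raises the order of the integrand at $w=0$. By Cauchy's coefficient formula for Laurent series, I would write
\[
[x]\,U_{r-1}\!\left(\frac1x\right)(\arcsin x)^r=\frac{1}{2\pi i}\oint_{C_\rho}\frac{(\arcsin z)^r}{z^2}\,U_{r-1}\!\left(\frac1z\right)dz ,
\]
and then substitute $z=2w/(1+w^2)$, i.e.\ $1/z=(w+w^{-1})/2$. This map is a local biholomorphism at the origin with derivative $2$ there. It therefore sends a small positively oriented loop $\Gamma_\rho$ around $w=0$ onto a small positively oriented loop around $z=0$, and the integral may be computed on $\Gamma_\rho$.

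Next I would simplify the integrand algebraically. As in the earlier proof,
\[
\frac{dz}{z^2}=\frac{1-w^2}{2w^2}\,dw,\qquad U_{r-1}\!\left(\frac{w+w^{-1}}2\right)=\frac{w^r-w^{-r}}{w-w^{-1}},
\]
where the second identity uses $U_m(\cosh\xi)=\sinh((m+1)\xi)/\sinh\xi$ with $\xi=\ln w$. Since $w-w^{-1}=(w^2-1)/w$, the rational factors collapse to
\[
U_{r-1}\!\left(\frac{w+w^{-1}}2\right)\frac{1-w^2}{2w^2}=\frac12\bigl(w^{-r-1}-w^{r-1}\bigr).
\]
I would then use the identity of holomorphic germs $\arcsin\bigl(2w/(1+w^2)\bigr)=2\arctan w$, already invoked in \cref{prop:first-chebyshev-arcsin-coefficient}. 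It can be justified by checking that both sides vanish at $0$ and have the same derivative $2/(1+w^2)$. Raising it to the $r$-th power gives
\[
[x]\,U_{r-1}\!\left(\frac1x\right)(\arcsin x)^r=\frac{2^{r}}{2}\cdot\frac{1}{2\pi i}\oint_{\Gamma_\rho}\bigl(w^{-r-1}-w^{r-1}\bigr)(\arctan w)^r\,dw .
\]

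Finally I would evaluate the residue at $w=0$. Because $r\ge1$, the term $w^{r-1}(\arctan w)^r$ is holomorphic at $0$ and contributes nothing. The remaining residue is $[w^r](\arctan w)^r$. Since $\arctan w=w\,(1+O(w^2))$, we have $(\arctan w)^r=w^r(1+O(w^2))$, so this coefficient equals $1$. The result is $2^{r-1}$, as claimed.

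The only delicate point is the justification of the change of variables: the orientation and local invertibility of the substitution, and the germ identity for $\arcsin$. Both are handled exactly as in \cref{prop:first-chebyshev-arcsin-coefficient}. Everything else is an algebraic simplification, and the final step is easier than in that proposition because only the leading coefficient of $(\arctan w)^r$ is needed.
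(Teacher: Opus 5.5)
Your proof is correct, but it takes a different route from the paper's. The paper does not reuse the contour substitution here; it argues purely by orders of vanishing. $U_{r-1}$ has degree $r-1$, leading coefficient $2^{r-1}$, and only monomials of the parity of $r-1$, so $U_{r-1}(1/x)=2^{r-1}x^{1-r}+O(x^{3-r})$. Combined with $(\arcsin x)^r=x^r+O(x^{r+2})$, this gives $U_{r-1}(1/x)(\arcsin x)^r=2^{r-1}x+O(x^3)$. Since $x$ is the lowest-order monomial of the product, only the two leading terms contribute.

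For this particular statement that is the more economical argument, since it needs no change of variables and no germ identity for $\arcsin$. Your computation in fact ends with the same leading-term observation, just moved to the $w$-side as $[w^r](\arctan w)^r=1$.

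What your route buys is generality. The exponent of $\arcsin$ plays no role until the last line, so the same computation proves
\[
[x]\,U_{r-1}\!\left(\frac1x\right)(\arcsin x)^s=2^{s-1}\,[w^r](\arctan w)^s,\qquad r,s\geq1 .
\]
This contains both \cref{prop:first-chebyshev-arcsin-coefficient} (take $s=1$, $r=2n-1$) and the present proposition (take $s=r$). It also turns every coefficient in \cref{proposition:logtan-integrals-compact} into a coefficient of a power of $\arctan w$. For example, $[w^5](\arctan w)^3=-1$ recovers the middle coefficient $32\beta(4)/\pi^3$ of the $\cos(10x)$ evaluation. The paper's leading-term argument, by contrast, reaches only the terminal case.
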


\begin{proof}
The Chebyshev polynomial \(U_{r-1}\) has degree \(r-1\), leading
coefficient \(2^{r-1}\), and contains only powers of the same parity as
\(r-1\). Therefore, as a Laurent expansion at \(x=0\),
\[
U_{r-1}\!\left(\frac1x\right)
=
2^{r-1}x^{1-r}+O(x^{3-r}).
\]

Moreover,
\[
\arcsin x=x+O(x^3),
\]
and hence
\[
(\arcsin x)^r
=
x^r+O(x^{r+2}).
\]
Multiplying these expansions gives
\[
\begin{aligned}
U_{r-1}\!\left(\frac1x\right)(\arcsin x)^r
&=
\left(2^{r-1}x^{1-r}+O(x^{3-r})\right)
\left(x^r+O(x^{r+2})\right)\\
&=
2^{r-1}x+O(x^3).
\end{aligned}
\]
In particular, no other term can contribute to the coefficient of \(x\).
Therefore
\[
[x]\,
U_{r-1}\!\left(\frac1x\right)(\arcsin x)^r
=
2^{r-1},
\]
which proves
\eqref{eq:parity-free-terminal-coefficient-proposition}.
\end{proof}

\section{The general tangent power integral on the half-line}\label[section]{section:general-tangent-integral}

Throughout this section, let \(m,n\geq1\), \(m\geq n\), and assume that
\(m+n\) is even. Put
\[
L:=(1-y^2)\frac{d}{dy},
\qquad
j_-:=\frac{m-n}{2},
\qquad
j_+:=\frac{m+n-2}{2}.
\]
Notice that \(j_-\) and \(j_+\) are nonnegative integers.

\begin{lemma}[The auxiliary polynomial]
\label[lemma]{lem:auxiliary-polynomial-general-tanh}
There is a unique polynomial
\[
\Phi_{m,n}(z)
=
\sum_{j=j_-}^{j_+}a_{m,n,j}z^j
\]
such that
\begin{equation}
\label{eq:Phi-mn-definition}
L^n y^{m+1}
=
y(1-y^2)\Phi_{m,n}(y^2).
\end{equation}
The endpoint coefficients are nonzero and satisfy
\begin{equation}
\label{eq:Phi-mn-endpoint-coefficients}
a_{m,n,j_-}
=
\frac{(m+1)!}{(m+1-n)!},
\qquad
a_{m,n,j_+}
=
(-1)^{n+1}\frac{(m+n)!}{m!}.
\end{equation}
Equivalently,
\begin{equation}
\label{eq:Dn-tanh-mn-Phi}
\frac{d^n}{dx^n}\tanh^{m+1}x
=
\tanh x\,\sech^2x\,
\Phi_{m,n}(\tanh^2x).
\end{equation}
\end{lemma}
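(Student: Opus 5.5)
We argue by induction on \(n\), and for the inductive step we work with a general odd power \(y^{2a+1}\) rather than only \(y^{m+1}\).

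\textbf{Setup and the equivalent form.} Put \(y=\tanh x\). Then \(\frac{dy}{dx}=1-y^2\), so \(\frac{d}{dx}\) acting on a function of \(y\) is exactly \(L\). Hence \(\frac{d^n}{dx^n}\tanh^{m+1}x=(L^ny^{m+1})|_{y=\tanh x}\). Since \(1-\tanh^2x=\sech^2x\), statement \eqref{eq:Dn-tanh-mn-Phi} is equivalent to \eqref{eq:Phi-mn-definition}.

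Uniqueness is immediate. Once \(L^ny^{m+1}\) is written as \(y(1-y^2)Q(y)\), the factor \(Q\) is determined because \(\mathbb{R}[y]\) is an integral domain. If \(Q\) is even, then \(Q(y)=\Phi(y^2)\) fixes \(\Phi\). So the real content is existence, the support \([j_-,j_+]\), and the endpoint values.

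\textbf{The operator on monomials.} The basic computation is
\[
L\,y^{s}=s\,y^{s-1}-s\,y^{s+1}.
\]
So \(L\) maps a polynomial with monomial degrees in \([a,b]\), all of one parity, to one with degrees in \([a-1,b+1]\) of the opposite parity. The lowest coefficient is multiplied by \(a\) and the highest by \(-b\), provided \(a\ge1\).

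Starting from \(y^{m+1}\) and applying \(L\) \(n\) times, \(L^ny^{m+1}\) is supported on degrees \(m+1-n,\dots,m+1+n\), all of the parity of \(m+1-n\). This parity is odd, since \(m+n\) is even. The extreme coefficients are
\[
(m+1)m\cdots(m+2-n)=\frac{(m+1)!}{(m+1-n)!}
\]
at the bottom and
\[
(-1)^n(m+1)(m+2)\cdots(m+n)=(-1)^n\frac{(m+n)!}{m!}
\]
at the top. Both are nonzero because \(m+1-n\ge1\), so the lowest degree never reaches \(0\) along the way. These statements follow by a one-line induction on \(n\) from the monomial formula.

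\textbf{Extracting the factor \(y(1-y^2)\).} For \(n\ge1\), \(L^ny^{m+1}=(1-y^2)\frac{d}{dy}\bigl(L^{n-1}y^{m+1}\bigr)\) is visibly divisible by \(1-y^2\). The quotient \(\frac{d}{dy}(L^{n-1}y^{m+1})\) is the derivative of an odd or even polynomial, and it has the parity of \(m+1-n\), which is odd. Therefore it equals \(y\Phi(y^2)\) for some polynomial \(\Phi\).

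The quotient has degree range \(m+1-n,\dots,m-1+n\). Writing it as \(y\sum_j a_jy^{2j}\) gives \(j\) running from \(\frac{m-n}{2}=j_-\) to \(\frac{m+n-2}{2}=j_+\).

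The endpoint coefficients of the quotient are those of \(\frac{d}{dy}L^{n-1}y^{m+1}\), obtained from the previous paragraph with \(n-1\) in place of \(n\) and then differentiated.
\begin{itemize}
\item The lowest coefficient is \(\frac{(m+1)!}{(m+2-n)!}\cdot(m+2-n)=\frac{(m+1)!}{(m+1-n)!}\).
\item The highest coefficient is \((-1)^{n-1}\frac{(m+n-1)!}{m!}\cdot(m+n)=(-1)^{n+1}\frac{(m+n)!}{m!}\).
\end{itemize}
These match \eqref{eq:Phi-mn-endpoint-coefficients}.

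The only delicate point is the bookkeeping: the endpoint coefficients must be computed from \(L^{n-1}\) followed by differentiation, not from \(L^n\) followed by division by \(1-y^2\). Dividing \(L^n y^{m+1}\) by \(1-y^2\) would give the lowest coefficient directly, but the top one only up to a sign flip. Routing through the derivative avoids this confusion, and that is the approach I will use.
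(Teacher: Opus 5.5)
Your proof is correct and follows essentially the same route as the paper. Both arguments use the monomial rule $Ly^s=sy^{s-1}-sy^{s+1}$, track the extreme degrees, extreme coefficients and parity of $L^ny^{m+1}$, and extract the factor $1-y^2$ from the last application of $L$.

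The only difference is bookkeeping. The paper reads $a_{m,n,j_\pm}$ directly off $L^ny^{m+1}=\sum_j a_{m,n,j}(y^{2j+1}-y^{2j+3})$, where the top sign comes from the leading coefficient $-1$ of $1-y^2$, so there is no real pitfall there. You instead read them off $\frac{d}{dy}L^{n-1}y^{m+1}$. Note also that the induction over general odd powers $y^{2a+1}$ announced in your first sentence is never actually used.
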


\begin{proof}
For every integer \(r\geq1\), one has
\[
Ly^r
=
r y^{r-1}-r y^{r+1}.
\]
Consequently, every monomial that can occur in \(L^n y^{m+1}\) has
exponent belonging to
\[
\{m+1-n,m+3-n,\ldots,m+n+1\}.
\]

The smallest exponent \(m+1-n\) is obtained by choosing the
degree-lowering term at every application of \(L\). Its coefficient is
\[
(m+1)m\cdots(m+2-n)
=
\frac{(m+1)!}{(m+1-n)!},
\]
and is therefore nonzero.

Similarly, the largest exponent \(m+n+1\) is obtained by choosing the
degree-raising term at every application of \(L\). Its coefficient is
\[
(-1)^n(m+1)(m+2)\cdots(m+n)
=
(-1)^n\frac{(m+n)!}{m!},
\]
and is also nonzero.

The operator \(L\) changes parity, since \(1-y^2\) is even and
\(d/dy\) changes parity. Thus \(L^n y^{m+1}\) has parity \(m+1-n\).
Since \(m+n\) is even, \(m+1-n\) is odd. Hence \(L^n y^{m+1}\) is an
odd polynomial.

Moreover, since \(n\geq1\), the final application of \(L\) contributes
the factor \(1-y^2\). Thus
\[
L^n y^{m+1}
=
(1-y^2)R_{m,n}(y)
\]
for some polynomial \(R_{m,n}\). Since \(L^n y^{m+1}\) is odd and
\(1-y^2\) is even, \(R_{m,n}\) is odd. Therefore there is a unique
polynomial \(\Phi_{m,n}\) such that
\[
R_{m,n}(y)
=
y\Phi_{m,n}(y^2),
\]
which proves \eqref{eq:Phi-mn-definition}.

Let \(j_{\min}\) and \(j_{\max}\) denote respectively the smallest and
largest indices occurring in \(\Phi_{m,n}\) with nonzero coefficient.
Then
\[
y(1-y^2)\Phi_{m,n}(y^2)
=
\sum_j a_{m,n,j}
\bigl(y^{2j+1}-y^{2j+3}\bigr).
\]
Its smallest exponent is \(2j_{\min}+1\), while its largest exponent is
\(2j_{\max}+3\). Comparing these with the extreme exponents found above
gives
\[
2j_{\min}+1=m+1-n,
\qquad
2j_{\max}+3=m+n+1.
\]
Hence
\[
j_{\min}
=
\frac{m-n}{2}
=
j_-,
\qquad
j_{\max}
=
\frac{m+n-2}{2}
=
j_+.
\]

Comparing the coefficients of the smallest and largest powers of \(y\)
also gives
\[
a_{m,n,j_-}
=
\frac{(m+1)!}{(m+1-n)!}
\]
and
\[
-a_{m,n,j_+}
=
(-1)^n\frac{(m+n)!}{m!},
\]
which proves \eqref{eq:Phi-mn-endpoint-coefficients}.

Finally, if \(y=\tanh x\), then
\[
\frac{dy}{dx}
=
1-y^2
\]
and therefore
\[
\frac{d}{dx}
=
(1-y^2)\frac{d}{dy}
=
L.
\]
Substituting \(y=\tanh x\) in \eqref{eq:Phi-mn-definition} gives
\eqref{eq:Dn-tanh-mn-Phi}.
\end{proof}

\begin{lemma}[Repeated integration by parts]
\label[lemma]{lem:general-tanh-half-line-ibp}
One has
\begin{equation}
\label{eq:general-tanh-half-line-ibp}
\int_0^\infty
\frac{\tanh^{m+1}x}{x^{n+1}}\,dx
=
\frac1{n!}
\int_0^\infty
\frac{D^n(\tanh^{m+1}x)}{x}\,dx.
\end{equation}
Writing
\[
\Phi_{m,n}(z)
=
\sum_{j=j_-}^{j_+}a_{m,n,j}z^j,
\]
one has
\begin{equation}
\label{eq:general-tanh-Phi-reduction}
\int_0^\infty
\frac{\tanh^{m+1}x}{x^{n+1}}\,dx
=
\frac1{n!}
\sum_{j=j_-}^{j_+}
a_{m,n,j}
\int_0^\infty
\frac{\sinh^{2j+1}x}
{x\cosh^{2j+3}x}\,dx.
\end{equation}
\end{lemma}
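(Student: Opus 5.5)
We will integrate by parts $n$ times, moving one power of $1/x$ per step onto derivatives of $\tanh^{m+1}x$, and then substitute \cref{lem:auxiliary-polynomial-general-tanh}. Write $f=\tanh^{m+1}x$ and use the identity
\[
\int_0^\infty \frac{g(x)}{x^{s+1}}\,dx=\frac1s\int_0^\infty\frac{g'(x)}{x^{s}}\,dx,
\]
valid whenever the boundary term $\bigl[-g(x)/(s x^{s})\bigr]_0^\infty$ vanishes. Applying it successively with $g=D^{i}f$ and $s=n-i$ for $i=0,\dots,n-1$ produces the factor $\frac1n\cdot\frac1{n-1}\cdots\frac11=\frac1{n!}$. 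This gives \eqref{eq:general-tanh-half-line-ibp}.

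The main point is to justify that every boundary term vanishes and that each intermediate integral converges. We need $D^{i}f(x)=O(x^{n-i+1})$ as $x\to0$ for $0\le i\le n$. Since $f=x^{m+1}+O(x^{m+3})$ is analytic at $0$, we have $D^{i}f=O(x^{m+1-i})$, and $m+1-i\ge n+1-i$ because $m\ge n$. Hence $D^if(x)/x^{n-i}=O(x)\to0$, and the integrand $D^{i}f/x^{n-i+1}$ is bounded near $0$.

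At infinity there are two cases. For $i=0$, $f\to1$, so $f/x^{n}\to0$ since $n\ge1$, and $f/x^{n+1}$ is integrable. For $i\ge1$, \eqref{eq:Dn-tanh-mn-Phi} (with $n$ replaced by $i$, the polynomial structure persisting) shows that $D^if$ contains the factor $\sech^2x$ and so decays exponentially. Thus all boundary terms vanish and all integrals converge absolutely. Note that at $i=n$ the final integrand $D^nf/x$ is $O(x^{m-n})$ at $0$, which is fine.

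For \eqref{eq:general-tanh-Phi-reduction}, insert \eqref{eq:Dn-tanh-mn-Phi}:
\[
D^n\tanh^{m+1}x=\sum_{j=j_-}^{j_+}a_{m,n,j}\tanh^{2j+1}x\,\sech^2x=\sum_{j}a_{m,n,j}\frac{\sinh^{2j+1}x}{\cosh^{2j+3}x}.
\]
Divide by $x$ and split the finite sum. Each resulting integral $\int_0^\infty \sinh^{2j+1}x/(x\cosh^{2j+3}x)\,dx$ converges separately: its integrand tends to a finite limit at $0$ and decays like $e^{-2x}/x$ at infinity. So linearity of the integral applies term by term, which proves \eqref{eq:general-tanh-Phi-reduction}.
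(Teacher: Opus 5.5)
Your proof is correct and follows essentially the same route as the paper. Both arguments use $n$ integrations by parts, control the boundary terms at $0$ via $D^r(\tanh^{m+1}x)=O(x^{m+1-r})$ and $m\geq n$, and control those at infinity via the negative power of $x$ (for $r=0$) or the factor $\sech^2x$ (for $r\geq1$), then substitute \eqref{eq:Dn-tanh-mn-Phi} and integrate termwise.

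One small looseness: \eqref{eq:Dn-tanh-mn-Phi} with $n$ replaced by $i$ is not literally available when $m+i$ is odd. What you actually need is that $L^iy^{m+1}$ is divisible by $1-y^2$ for every $i\geq1$, and that holds regardless of parity, because the last application of $L=(1-y^2)\,d/dy$ always produces that factor.
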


\begin{proof}
Since
\[
D^n\!\left(\frac1x\right)
=
(-1)^n\frac{n!}{x^{n+1}},
\]
we integrate by parts \(n\) times on \((\varepsilon,R)\) and then let
\(\varepsilon\to0^+\) and \(R\to\infty\).

The boundary terms vanish. Near \(0\),
\[
D^r(\tanh^{m+1}x)
=
O(x^{m+1-r})
\qquad
(0\leq r\leq n).
\]
Every boundary term at \(0\) is therefore
\[
O(x^{m+1-n}),
\]
and hence tends to zero because \(m\geq n\).

At \(\infty\), the boundary term involving \(\tanh^{m+1}x\) tends to
zero because it is multiplied by a negative power of \(x\). All boundary
terms involving at least one derivative of \(\tanh^{m+1}x\) tend to zero
exponentially. This proves
\eqref{eq:general-tanh-half-line-ibp}.

Using \cref{lem:auxiliary-polynomial-general-tanh}, we obtain
\[
D^n(\tanh^{m+1}x)
=
\tanh x\,\sech^2x\,
\Phi_{m,n}(\tanh^2x).
\]
Expanding \(\Phi_{m,n}\) gives
\[
D^n(\tanh^{m+1}x)
=
\sum_{j=j_-}^{j_+}
a_{m,n,j}
\tanh^{2j+1}x\,\sech^2x.
\]
Since
\[
\tanh^{2j+1}x\,\sech^2x
=
\frac{\sinh^{2j+1}x}{\cosh^{2j+3}x},
\]
equation \eqref{eq:general-tanh-Phi-reduction} follows.
\end{proof}

\begin{lemma}[General cotangent collapse]
\label[lemma]{lem:general-cotangent-collapse}
Let \(p\geq1\), and define
\begin{equation}
\label{eq:B-mnp-definition}
B_{m,n,p}
:=
\sum_{j=j_-}^{j_+}
a_{m,n,j}
[z^{2j+2}]
(z^2-1)^j(\arcsin z)^{2p}.
\end{equation}
Then \(B_{m,n,p}=0\) for \(2p<n\), and, for \(2p\geq n\),
\begin{equation}
\label{eq:B-mnp-collapsed}
B_{m,n,p}
=
(-1)^{(m-n)/2}
\frac{(2p)!}{(2p-n)!}
[u^{m+n-2p}](u\cot u)^{m+1}.
\end{equation}
\end{lemma}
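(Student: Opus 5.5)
The plan is to turn each coefficient in \eqref{eq:B-mnp-definition} into a residue in a variable \(u\) with \(z=\sin u\), and to recognize the resulting \(j\)-sum as an \(n\)-th derivative of \(\cot^{m+1}u\) by means of \cref{lem:auxiliary-polynomial-general-tanh}. The derivative is then moved onto \(u^{2p}\), as in the contour arguments of \cref{lemma:cd-sums-to-arcsine-chebyshev}.

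\emph{Step 1 (substitution).} By Cauchy's coefficient formula,
\[
[z^{2j+2}](z^2-1)^j(\arcsin z)^{2p}=\frac1{2\pi i}\oint (z^2-1)^j(\arcsin z)^{2p}\frac{dz}{z^{2j+3}}
\]
over a small circle. I substitute \(z=\sin u\), which is locally biholomorphic at \(0\) and maps a small positively oriented loop to one of the same kind. Then \((z^2-1)^j=(-1)^j\cos^{2j}u\), \(dz=\cos u\,du\) and \(\arcsin z=u\). Hence the coefficient equals
\[
(-1)^j\operatorname*{Res}_{u=0}\, u^{2p}\cot^{2j+1}u\,\csc^2u .
\]
Consequently
\[
B_{m,n,p}=\operatorname*{Res}_{u=0}\, u^{2p}\sum_{j}a_{m,n,j}(-1)^j\cot^{2j+1}u\,\csc^2u .
\]

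\emph{Step 2 (identifying the \(j\)-sum).} I apply \eqref{eq:Phi-mn-definition} with the complex substitution \(y=i\cot u\).

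\begin{itemize}
\item We have \(y^2=-\cot^2u\), so \(1-y^2=\csc^2u\).
\item We have \(dy/du=-i\csc^2u=-i(1-y^2)\), so \(d/du=-iL\), that is, \(L^n=i^nD_u^n\).
\item Since \(y\,\Phi_{m,n}(y^2)=i\sum_j a_{m,n,j}(-1)^j\cot^{2j+1}u\), the \(j\)-sum equals \(-i\,y(1-y^2)\Phi_{m,n}(y^2)=-i\,L^n y^{m+1}\).
\end{itemize}

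Therefore the \(j\)-sum equals
\[
-i\cdot i^n D_u^n\bigl(i^{m+1}\cot^{m+1}u\bigr)=i^{m+n}D_u^n\cot^{m+1}u=(-1)^{(m+n)/2}D_u^n\cot^{m+1}u,
\]
using that \(m+n\) is even. Since \eqref{eq:Phi-mn-definition} is a polynomial identity in \(y\), it remains valid for this complex-valued \(y\), as an identity of meromorphic functions of \(u\).

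\emph{Step 3 (moving the derivatives).} The residue of a derivative of a Laurent series vanishes. Integrating by parts \(n\) times inside the residue therefore gives
\[
\operatorname*{Res}\, u^{2p}D^n\cot^{m+1}u=(-1)^n\operatorname*{Res}\,(D^nu^{2p})\cot^{m+1}u .
\]

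\begin{itemize}
\item If \(2p<n\), then \(D^nu^{2p}=0\), so \(B_{m,n,p}=0\).
\item If \(2p\ge n\), the right-hand side equals
\[
(-1)^n\frac{(2p)!}{(2p-n)!}\,[u^{n-2p-1}]\cot^{m+1}u=(-1)^n\frac{(2p)!}{(2p-n)!}\,[u^{m+n-2p}](u\cot u)^{m+1}.
\]
\end{itemize}

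The total sign is \((-1)^{(m+n)/2+n}=(-1)^{(m-n)/2}\), which gives \eqref{eq:B-mnp-collapsed}.

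The main obstacle is Step 2: choosing \(y=i\cot u\) so that both the factor \((-1)^j\) and the \(\csc^2 u\) are absorbed, and tracking the powers of \(i\) correctly. Steps 1 and 3 are routine residue manipulations.
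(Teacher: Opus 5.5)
Your proposal is correct and follows the paper's proof essentially step for step. You substitute \(z=\sin u\) to turn the coefficient sum into the residue of \(u^{2p}\cot u\,\csc^2u\,\Phi_{m,n}(-\cot^2u)\), identify that factor with \((-1)^{(m+n)/2}D_u^n\cot^{m+1}u\) through the auxiliary polynomial, and then integrate by parts \(n\) times onto \(u^{2p}\); your single substitution \(y=i\cot u\) simply merges the paper's two steps \(y=iv\), \(v=\cot u\), and your sign bookkeeping is right.
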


\begin{proof} We first rewrite \(B_{m,n,p}\) as a single Laurent coefficient. Since \[ \Phi_{m,n}(w)=\sum_{j=j_-}^{j_+}a_{m,n,j}w^j, \] we have \[ \Phi_{m,n}\!\left(1-\frac1{z^2}\right) = \sum_{j=j_-}^{j_+} a_{m,n,j} \left(1-\frac1{z^2}\right)^j. \] Moreover, since \[ (z^2-1)^j=z^{2j}\left(1-\frac1{z^2}\right)^j, \] it follows that \[ [z^{2j+2}](z^2-1)^j(\arcsin z)^{2p} = [z^2]\left(1-\frac1{z^2}\right)^j(\arcsin z)^{2p}. \] Hence \begin{equation} \label{eq:B-mnp-Laurent} B_{m,n,p} = [z^2](\arcsin z)^{2p} \Phi_{m,n}\!\left(1-\frac1{z^2}\right), \end{equation} where the coefficient is understood in the Laurent expansion at \(z=0\). Let \(C_\rho=\{|z|=\rho\}\) be positively oriented and sufficiently small that \(\arcsin z\) is holomorphic on and inside \(C_\rho\). By Cauchy's coefficient formula for Laurent series, \begin{equation} \label{eq:B-mnp-contour-z} B_{m,n,p} = \frac1{2\pi i} \oint_{C_\rho} \frac{(\arcsin z)^{2p}}{z^3} \Phi_{m,n}\!\left(1-\frac1{z^2}\right)\,dz, \end{equation} since division by \(z^3\) extracts the coefficient of \(z^2\). We now make the local biholomorphic change of variables \(z=\sin u\) near \(u=0\). If \(\widetilde C_\rho\) is the positively oriented preimage of \(C_\rho\), then \[ \arcsin z=u,\qquad dz=\cos u\,du,\qquad 1-\frac1{z^2}=1-\csc^2u=-\cot^2u. \] Therefore \eqref{eq:B-mnp-contour-z} becomes \[ B_{m,n,p} = \frac1{2\pi i} \oint_{\widetilde C_\rho} u^{2p}\frac{\cos u}{\sin^3u} \Phi_{m,n}(-\cot^2u)\,du. \] Since \(\cos u/\sin^3u=\cot u\,\csc^2u\), this gives \[ B_{m,n,p} = \frac1{2\pi i} \oint_{\widetilde C_\rho} u^{2p}\cot u\,\csc^2u\, \Phi_{m,n}(-\cot^2u)\,du. \] The integrand is meromorphic near \(u=0\) and has no singularities in a sufficiently small punctured neighbourhood of \(0\). Hence the contour may be deformed to any sufficiently small positively oriented circle \(C_r=\{|u|=r\}\), yielding \begin{equation} \label{eq:B-mnp-contour-u} B_{m,n,p} = \frac1{2\pi i} \oint_{C_r} u^{2p}\cot u\,\csc^2u\, \Phi_{m,n}(-\cot^2u)\,du. \end{equation} We next identify the cotangent derivative. Starting from \[ L^ny^{m+1} = y(1-y^2)\Phi_{m,n}(y^2), \qquad L=(1-y^2)\frac{d}{dy}, \] put \(y=iv\). Since \(1-y^2=1+v^2\) and \(\frac{d}{dy}=\frac1i\frac{d}{dv}\), one has \[ L=\frac1i(1+v^2)\frac{d}{dv}. \] Thus \[ \frac1{i^n} \left((1+v^2)\frac{d}{dv}\right)^n (iv)^{m+1} = iv(1+v^2)\Phi_{m,n}(-v^2). \] Simplifying the powers of \(i\), and using that \(m+n\) is even, gives \begin{equation} \label{eq:Phi-mn-v-form} \left((1+v^2)\frac{d}{dv}\right)^n v^{m+1} = (-1)^{(m-n)/2} v(1+v^2)\Phi_{m,n}(-v^2). \end{equation} Now put \(v=\cot u\). Since \[ \frac{dv}{du}=-\csc^2u=-(1+v^2), \] we have \(\frac{d}{du}=-(1+v^2)\frac{d}{dv}\). Applying this identity \(n\) times to \eqref{eq:Phi-mn-v-form} yields \begin{equation} \label{eq:cot-derivative-general-mn} \frac{d^n}{du^n}\cot^{m+1}u = (-1)^{n+(m-n)/2} \cot u\,\csc^2u\, \Phi_{m,n}(-\cot^2u). \end{equation} Substituting \eqref{eq:cot-derivative-general-mn} into \eqref{eq:B-mnp-contour-u}, we obtain \begin{equation} \label{eq:B-mnp-contour-derivative} B_{m,n,p} = \frac{(-1)^{n+(m-n)/2}}{2\pi i} \oint_{C_r} u^{2p} \frac{d^n}{du^n}\cot^{m+1}u\,du. \end{equation} We now integrate by parts \(n\) times along \(C_r\). For meromorphic functions \(F\) and \(G\) on a neighbourhood of \(C_r\), \[ \oint_{C_r}F(u)G'(u)\,du = -\oint_{C_r}F'(u)G(u)\,du, \] because the integral of \((FG)'\) over a closed contour is zero. Iterating, \[ \oint_{C_r}F(u)G^{(n)}(u)\,du = (-1)^n \oint_{C_r}F^{(n)}(u)G(u)\,du. \] Taking \(F(u)=u^{2p}\) and \(G(u)=\cot^{m+1}u\), \eqref{eq:B-mnp-contour-derivative} becomes \begin{equation} \label{eq:B-mnp-after-parts} B_{m,n,p} = \frac{(-1)^{(m-n)/2}}{2\pi i} \oint_{C_r} \frac{d^n}{du^n}(u^{2p}) \cot^{m+1}u\,du. \end{equation} If \(2p<n\), then \(\frac{d^n}{du^n}u^{2p}=0\), and hence \(B_{m,n,p}=0\). If \(2p\geq n\), then \[ \frac{d^n}{du^n}u^{2p} = \frac{(2p)!}{(2p-n)!}u^{2p-n}, \] so \begin{equation} \label{eq:B-mnp-before-final-coefficient} B_{m,n,p} = (-1)^{(m-n)/2} \frac{(2p)!}{(2p-n)!} \frac1{2\pi i} \oint_{C_r} u^{2p-n}\cot^{m+1}u\,du. \end{equation} Finally, \[ u^{2p-n}\cot^{m+1}u = u^{2p-n-m-1}(u\cot u)^{m+1}. \] Since \(u\cot u\) is holomorphic at \(u=0\) and has value \(1\), Cauchy's coefficient formula gives \[ \frac1{2\pi i} \oint_{C_r} u^{2p-n}\cot^{m+1}u\,du = [u^{m+n-2p}](u\cot u)^{m+1}. \] Substitution into \eqref{eq:B-mnp-before-final-coefficient} proves \eqref{eq:B-mnp-collapsed}. \end{proof} 

\begin{proposition}
[General zeta formula for the hyperbolic tangent power integral]
\label[proposition]{prop:general-tangent-power-integral}
Let \(m,n\geq1\), \(m\geq n\), and suppose that \(m+n\) is even. Then
\begin{equation}
\label{eq:general-tangent-power-integral-formula}
\boxed{
\int_0^\infty
\frac{\tanh^{m+1}x}{x^{n+1}}\,dx
=
(-1)^{(m-n)/2}
\sum_{p=\lceil n/2\rceil}^{(m+n)/2}
\binom{2p}{n}
(2^{2p+1}-1)
\frac{\zeta(2p+1)}{\pi^{2p}}
[u^{m+n-2p}](u\cot u)^{m+1}
}
\end{equation}
\end{proposition}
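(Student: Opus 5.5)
We assemble the three lemmas of this section with \cref{cor:odd-sinh-power-shifted-integrals}. First, \cref{lem:general-tanh-half-line-ibp} gives
\[
\int_0^\infty\frac{\tanh^{m+1}x}{x^{n+1}}\,dx=\frac1{n!}\sum_{j=j_-}^{j_+}a_{m,n,j}\int_0^\infty\frac{\sinh^{2j+1}x}{x\cosh^{2j+3}x}\,dx .
\]
Each inner integral is of the form treated in \cref{cor:odd-sinh-power-shifted-integrals}, with parameters $k=j$ and $2n'+1=2j+3$, i.e.\ $n'=j+1$. The hypothesis $0\le k<n'$ holds because $j<j+1$. Hence
\[
\int_0^\infty\frac{\sinh^{2j+1}x}{x\cosh^{2j+3}x}\,dx=\sum_{p=1}^{j+1}(2^{2p+1}-1)\,[z^{2j+2}](z^2-1)^j(\arcsin z)^{2p}\,\frac{\zeta(2p+1)}{\pi^{2p}} .
\]

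Next we make the range of $p$ independent of $j$. Since $\operatorname{ord}_{z=0}(\arcsin z)^{2p}=2p$, the coefficient $[z^{2j+2}]$ vanishes for $p>j+1$. We may therefore sum $p$ from $1$ to $j_++1=(m+n)/2$ for every $j$ and interchange the two finite sums. This gives
\[
\int_0^\infty\frac{\tanh^{m+1}x}{x^{n+1}}\,dx=\frac1{n!}\sum_{p=1}^{(m+n)/2}(2^{2p+1}-1)\frac{\zeta(2p+1)}{\pi^{2p}}\,B_{m,n,p},
\]
with $B_{m,n,p}$ exactly as defined in \eqref{eq:B-mnp-definition}.

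Finally we apply \cref{lem:general-cotangent-collapse}. The terms with $2p<n$ vanish, which turns the lower limit into $p=\lceil n/2\rceil$. For $2p\ge n$ we substitute
\[
B_{m,n,p}=(-1)^{(m-n)/2}\frac{(2p)!}{(2p-n)!}\,[u^{m+n-2p}](u\cot u)^{m+1}.
\]
Dividing by $n!$ produces $\frac{(2p)!}{n!(2p-n)!}=\binom{2p}{n}$, which yields \eqref{eq:general-tangent-power-integral-formula}.

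The substantive work, namely the cotangent collapse and the convergence of the integration by parts, is already contained in the earlier lemmas, so no step is a real obstacle. The only point needing care is the index bookkeeping. The upper bound $(m+n)/2=j_++1$ must match the largest $p$ admitted by the corollary for the largest $j$. The corollary must apply for every $j\ge j_-$, including $j=0$ when $m=n$.
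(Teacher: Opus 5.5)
Your proposal is correct and follows the paper's proof essentially step for step: the integration-by-parts lemma, the odd-$\sinh$ corollary with $k=j$ and $n=j+1$, the cotangent collapse, and the identity $\frac{(2p)!}{n!(2p-n)!}=\binom{2p}{n}$. The only difference is cosmetic. You cap $p$ at $(m+n)/2$ before interchanging the sums, using the order of $(\arcsin z)^{2p}$, whereas the paper sums over all $p\geq1$ and afterwards discards $p>(m+n)/2$ because $[u^{m+n-2p}](u\cot u)^{m+1}$ vanishes there.
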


\begin{proof} By \cref{lem:general-tanh-half-line-ibp}, \[ \int_0^\infty \frac{\tanh^{m+1}x}{x^{n+1}}\,dx = \frac1{n!} \sum_{j=j_-}^{j_+} a_{m,n,j} \int_0^\infty \frac{\sinh^{2j+1}x}{x\cosh^{2j+3}x}\,dx. \] Applying \eqref{eq:odd-sinh-zeta-general} with \(k=j\) and \(n=j+1\), we obtain \[ \int_0^\infty \frac{\tanh^{m+1}x}{x^{n+1}}\,dx = \frac1{n!} \sum_{p\geq1} (2^{2p+1}-1) \frac{\zeta(2p+1)}{\pi^{2p}} B_{m,n,p}. \] By \cref{lem:general-cotangent-collapse}, one has \(B_{m,n,p}=0\) whenever \(2p<n\), while for \(2p\geq n\), \[ B_{m,n,p} = (-1)^{(m-n)/2} \frac{(2p)!}{(2p-n)!} [u^{m+n-2p}](u\cot u)^{m+1}. \] Hence \(p\geq\lceil n/2\rceil\). Moreover, the coefficient extraction vanishes when \(m+n-2p<0\), so only \(p\leq(m+n)/2\) contributes. Using \[ \frac1{n!}\frac{(2p)!}{(2p-n)!} = \binom{2p}{n}, \] we conclude that \[ \int_0^\infty \frac{\tanh^{m+1}x}{x^{n+1}}\,dx = (-1)^{(m-n)/2} \sum_{p=\lceil n/2\rceil}^{(m+n)/2} \binom{2p}{n} (2^{2p+1}-1) \frac{\zeta(2p+1)}{\pi^{2p}} [u^{m+n-2p}](u\cot u)^{m+1}, \] which is \eqref{eq:general-tangent-power-integral-formula}. \end{proof}

\begin{example}
We illustrate \cref{prop:general-tangent-power-integral}. For \(m=7\), one obtains
\begin{align*}
\int_0^\infty \frac{\tanh^8 x}{x^2}\,dx
&=
\frac{352}{15}\frac{\zeta(3)}{\pi^2}
-\frac{5456}{15}\frac{\zeta(5)}{\pi^4}
+2032\frac{\zeta(7)}{\pi^6}
-4088\frac{\zeta(9)}{\pi^8},
\\
\int_0^\infty \frac{\tanh^8 x}{x^4}\,dx
&=
-\frac{21824}{105}\frac{\zeta(5)}{\pi^4}
+\frac{22352}{3}\frac{\zeta(7)}{\pi^6}
-\frac{228928}{3}\frac{\zeta(9)}{\pi^8}
+245640\frac{\zeta(11)}{\pi^{10}},
\\
\int_0^\infty \frac{\tanh^8 x}{x^6}\,dx
&=
\frac{44704}{35}\frac{\zeta(7)}{\pi^6}
-\frac{1259104}{15}\frac{\zeta(9)}{\pi^8}
+1375584\frac{\zeta(11)}{\pi^{10}}
-6487272\frac{\zeta(13)}{\pi^{12}},
\\
\int_0^\infty \frac{\tanh^8 x}{x^8}\,dx
&=
-\frac{102784}{15}\frac{\zeta(9)}{\pi^8}
+720544\frac{\zeta(11)}{\pi^{10}}
-17299392\frac{\zeta(13)}{\pi^{12}}
+112456344\frac{\zeta(15)}{\pi^{14}}.
\end{align*}
In particular, as \(n\) increases, the first zeta values disappear:
for \(n=3\) there is no \(\zeta(3)\)-term, for \(n=5\) there are no
\(\zeta(3)\)- and \(\zeta(5)\)-terms, and for \(n=7\) the
\(\zeta(3)\)-, \(\zeta(5)\)-, and \(\zeta(7)\)-terms are absent.
\end{example}

\begin{example}
For \(m=8\), the same formula gives
\begin{align*}
\int_0^\infty \frac{\tanh^9 x}{x^3}\,dx
&=
-7\frac{\zeta(3)}{\pi^2}
+\frac{50716}{105}\frac{\zeta(5)}{\pi^4}
-7239\frac{\zeta(7)}{\pi^6}
+42924\frac{\zeta(9)}{\pi^8}
-92115\frac{\zeta(11)}{\pi^{10}},
\\
\int_0^\infty \frac{\tanh^9 x}{x^5}\,dx
&=
31\frac{\zeta(5)}{\pi^4}
-\frac{103886}{21}\frac{\zeta(7)}{\pi^6}
+135926\frac{\zeta(9)}{\pi^8}
-1289610\frac{\zeta(11)}{\pi^{10}}
+4054545\frac{\zeta(13)}{\pi^{12}},
\\
\int_0^\infty \frac{\tanh^9 x}{x^7}\,dx
&=
-127\frac{\zeta(7)}{\pi^6}
+\frac{1671992}{45}\frac{\zeta(9)}{\pi^8}
-1633506\frac{\zeta(11)}{\pi^{10}}
+22705452\frac{\zeta(13)}{\pi^{12}}
-98399301\frac{\zeta(15)}{\pi^{14}},
\\
\int_0^\infty \frac{\tanh^9 x}{x^9}\,dx
&=
511\frac{\zeta(9)}{\pi^8}
-\frac{1674446}{7}\frac{\zeta(11)}{\pi^{10}}
+15407271\frac{\zeta(13)}{\pi^{12}}
-295197903\frac{\zeta(15)}{\pi^{14}}
+1686883770\frac{\zeta(17)}{\pi^{16}}.
\end{align*}
Again the lower zeta values are absent exactly as predicted by the lower
summation bound \(p=\lceil n/2\rceil\), or equivalently by the vanishing
of \(\displaystyle\binom{2p}{n}\) for \(2p<n\).
\end{example}

\begin{remark}
With the change of notation \(a=m+1\) and \(b=n+1\), the integral in
Proposition~\ref{prop:general-tangent-power-integral} belongs to the family
\[
K(a,b):=\int_0^\infty \frac{\tanh^a x}{x^b}\,dx,
\qquad a\geq b\geq 2,\qquad a\equiv b \pmod 2.
\]
This family is known to admit closed forms in terms of odd zeta values, often
written in umbral notation involving Mittag--Leffler polynomials.%
\footnote{For background discussions of this integral family, see
\url{https://mathoverflow.net/questions/271526/is-there-a-closed-form-for-int-0-infty-frac-tanh3xx2dx}
and \url{https://en.wikipedia.org/wiki/Mittag-Leffler_polynomials\#Closed_forms_of_integral_families}.}
The point of formula~\eqref{eq:general-tangent-power-integral-formula} is that it
gives an explicit coefficient form, with the coefficients extracted directly from
\((u\cot u)^{m+1}\), rather than using umbral notation.
\end{remark}

\begin{remark}[The last zeta coefficient]
The last summand in
\eqref{eq:general-tangent-power-integral-formula} corresponds to
\[
p=\frac{m+n}{2}.
\]
For this value one has
\[
m+n-2p=0,
\]
and therefore
\[
[u^0](u\cot u)^{m+1}=1.
\]
Hence the highest zeta term is always
\[
(-1)^{(m-n)/2}
\binom{m+n}{n}
(2^{m+n+1}-1)
\frac{\zeta(m+n+1)}{\pi^{m+n}}.
\]
In particular, the coefficient of the last zeta value is an integer.
\end{remark}

\begin{remark}[The first zeta coefficient for even \(n\)] Assume that \(n\) is even. Since \(m+n\) is even, \(m\) is even as well. The first summand in \eqref{eq:general-tangent-power-integral-formula} corresponds to \(p=n/2\). For this value, \[ \binom{2p}{n}=\binom{n}{n}=1 \] and \[ [u^{m+n-2p}](u\cot u)^{m+1} = [u^m](u\cot u)^{m+1}. \] By the Lagrange inversion  formula \textnormal{\cite{SuryaWarnke2023},\cite[Theorem~5.4.2]{Stanley2023}; see also \cite{Gessel2016}}, applied to the compositional inverse pair \[ u=\arctan t, \qquad t=\tan u, \] we obtain \[ [u^m]\left(\frac{u}{\tan u}\right)^{m+1} = (m+1)[t^{m+1}]\arctan t. \] Since \(m\) is even and \[ [t^{m+1}]\arctan t = \frac{(-1)^{m/2}}{m+1}, \] it follows that \[ [u^m](u\cot u)^{m+1} = (-1)^{m/2}. \] Consequently, the first term in \eqref{eq:general-tangent-power-integral-formula} is \[ (-1)^{(m-n)/2}(-1)^{m/2} (2^{n+1}-1)\frac{\zeta(n+1)}{\pi^n}. \] Because \(m\) is even, \[ (-1)^{(m-n)/2+m/2} = (-1)^{m-n/2} = (-1)^{n/2}, \] and hence the first zeta term is \[ (-1)^{n/2} (2^{n+1}-1)\frac{\zeta(n+1)}{\pi^n}. \] In particular, for even \(n\), the coefficient of the first zeta value is independent of \(m\). \end{remark}

\begin{remark}[Alternation of the relevant coefficients] The alternating signs in the zeta expansion are governed by the coefficients of \((u\cot u)^{m+1}\) occurring in \eqref{eq:general-tangent-power-integral-formula}. These are precisely \[ [u^{2j}](u\cot u)^{m+1}, \qquad 0\leq 2j\leq m. \] Indeed, in \eqref{eq:general-tangent-power-integral-formula}, \[ 2j=m+n-2p, \] and the range of \(p\) implies \(0\leq 2j\leq m\). By the Lagrange inversion formula \textnormal{\cite{SuryaWarnke2023}; see also\cite[Theorem~5.4.2]{Stanley2023} and \cite{Gessel2016,Comtet1974}}, applied to the compositional inverse pair \[ u=\arctan t, \qquad t=\tan u, \] one obtains, throughout this range, \[ [u^{2j}](u\cot u)^{m+1} = \frac{m+1}{m+1-2j} [t^{2j}] \left(\frac{\arctan t}{t}\right)^{m+1-2j}. \] The exponent \[ q:=m+1-2j \] is a positive integer. To determine the sign of the coefficient, use the integral representation \[ \frac{\arctan t}{t} = \int_0^1\frac{dx}{1+t^2x^2}. \] It follows that \[ \left(\frac{\arctan t}{t}\right)^q = \int_{[0,1]^q} \prod_{\ell=1}^q \frac{dx_1\cdots dx_q}{1+t^2x_\ell^2}. \] Expanding each factor as a geometric series gives \[ [t^{2j}] \left(\frac{\arctan t}{t}\right)^q = (-1)^j \int_{[0,1]^q} h_j(x_1^2,\ldots,x_q^2)\, dx_1\cdots dx_q, \] where \(h_j\) denotes the complete homogeneous symmetric polynomial of degree \(j\). Since \[ h_j(x_1^2,\ldots,x_q^2)>0 \] on a set of positive measure in \([0,1]^q\), the integral is strictly positive. Therefore \[ \operatorname{sgn} \left( [t^{2j}] \left(\frac{\arctan t}{t}\right)^q \right) = (-1)^j, \] and hence \[ \operatorname{sgn} \left( [u^{2j}](u\cot u)^{m+1} \right) = (-1)^j. \] Since \[ j=\frac{m+n}{2}-p, \] increasing \(p\) by one changes the sign of the corresponding coefficient. All remaining factors in \eqref{eq:general-tangent-power-integral-formula}, apart from the fixed global sign \((-1)^{(m-n)/2}\), are positive. Consequently, the nonzero zeta coefficients alternate in sign. No assertion is made about the coefficients of \((u\cot u)^{m+1}\) outside the range \(0\leq 2j\leq m\). \end{remark}

\section{Explicit formulae for the Coefficients in Blagouchine's
Generalizations}
\label{sec:blagouchine-coefficients}

For \(a>0\), \(b>0\), and \(r\in\mathbb{N}\), consider the family
of logarithmic integrals
\[
\mathcal{I}_{r}(a,b)
:=
\int_{0}^{\infty}
\frac{\ln(x^{2}+a^{2})}{\cosh^{r}(bx)}\,dx.
\]

Blagouchine evaluated the first several members of this family and
observed that the general form of the answer depends on the parity of
\(r\) \cite{Blagouchine2014}. More precisely, he introduced rational
coefficients \(A_r\) and \(B_{r,\ell}\) in the corresponding odd and
even formulae. However, no closed formulae in terms of \(r\) and
\(\ell\) were given for the complete families \(A_r\) and
\(B_{r,\ell}\). Instead, the coefficients were described as rational
numbers whose values could be computed for each fixed \(r\), and a
table of values up to \(r=20\) was provided.

The coefficient-extraction notation
\[
[w^k]f(w)
\]
will denote the coefficient of \(w^k\) in the Taylor expansion of
\(f(w)\) at \(w=0\). We also write
\[
\Psi_q(z)
:=
\frac{d^q}{dz^q}\Psi(z),
\qquad
\Psi(z)=\frac{\Gamma'(z)}{\Gamma(z)},
\]
so that \(\Psi_0=\Psi\).

The coefficient-extraction formulae below make all the coefficients
in Blagouchine's two parity-dependent expressions explicit.

\subsection{Even powers}

Let \(n\geq 1\). Then
\[
\boxed{
\begin{aligned}
\int_{0}^{\infty}
\frac{\ln(x^{2}+a^{2})}{\cosh^{2n}(bx)}\,dx
={}&
\frac{1}{b}
\frac{
2^{2n-1}\bigl((n-1)!\bigr)^2
}{
(2n-1)!
}
\ln\frac{\pi}{b}
\\
&+
\frac{2}{b}
\sum_{m=0}^{n-1}
\frac{
(-1)^m
[w^{2m}]
\left(\dfrac{w}{\sinh w}\right)^{2n}
}{
(2n-1-2m)!\,
\pi^{\,2n-2-2m}
}
\Psi_{2n-2-2m}\left(\frac12+\frac{ab}{\pi}\right).
\end{aligned}
}
\tag{E}
\]

Equivalently, the coefficients appearing in Blagouchine's notation
are
\[
\boxed{
A_{2n}
=
2(-1)^{n-1}
[w^{2n-2}]
\left(\frac{w}{\sinh w}\right)^{2n}=
\frac{
2^{2n-1}\bigl((n-1)!\bigr)^2
}{
(2n-1)!
}}
\]
and
\[
\boxed{
B_{2n,\ell}
=
\frac{
2(-1)^{n-1-\ell}
}{
(2\ell+1)!
}
[w^{2n-2-2\ell}]
\left(\frac{w}{\sinh w}\right)^{2n},
\qquad
1\leq \ell\leq n-1.
}
\]

\subsection{Odd powers}

Let \(n\geq 0\), and define \(\displaystyle
\xi:=\frac14+\frac{ab}{2\pi},
\qquad
\eta:=\frac34+\frac{ab}{2\pi}.
\)
Then
\[
\boxed{
\begin{aligned}
\int_{0}^{\infty}
\frac{\ln(x^{2}+a^{2})}{\cosh^{2n+1}(bx)}\,dx
={}&
\frac{\pi(2n)!}{4^n b\,(n!)^2}
\ln\left[
\frac{2\pi}{b}
\left(
\frac{\Gamma(\eta)}{\Gamma(\xi)}
\right)^2
\right]
\\
&+
\frac{1}{b}
\sum_{m=0}^{n-1}
\frac{
(-1)^m
[w^{2m}]
\left(\dfrac{w}{\sinh w}\right)^{2n+1}
}{
(2n-2m)!\,
(2\pi)^{\,2n-2m-1}
}
\\
&\hspace{32mm}\times
\left\{
\Psi_{2n-2m-1}(\eta)
-
\Psi_{2n-2m-1}(\xi)
\right\}.
\end{aligned}
}
\tag{O}
\]

The finite sum in \((O)\) is understood to be empty when \(n=0\).

Equivalently, Blagouchine's coefficients in the odd case are
\[
\boxed{
A_{2n+1}
=
2(-1)^n
[w^{2n}]
\left(\frac{w}{\sinh w}\right)^{2n+1}=
2\,\frac{\binom{2n}{n}}{4^n}
}
\]
and
\[
\boxed{
B_{2n+1,\ell}
=
\frac{
(-1)^{n-\ell}
}{
2^{\,2\ell-1}(2\ell)!
}
[w^{2n-2\ell}]
\left(\frac{w}{\sinh w}\right)^{2n+1},
\qquad
1\leq \ell\leq n.
}
\]

Thus, formulae \((E)\) and \((O)\) replace the previously unspecified
rational coefficients by explicit coefficient extractions from the
single elementary generating function
\(\displaystyle
\left(\frac{w}{\sinh w}\right)^r.
\)

\section{Malmsten-type integrals for $\boldsymbol{\beta(2n)}$ and
$\boldsymbol{\zeta(2n+1)}$}
\label{sec:Malmsten-integrals}

Malmsten-type integrals involving the logarithmic kernel
$\ln\ln x$ have long been used to represent special values of
Dirichlet series. A classical example is the evaluation of
$\beta(2)$ obtained by Adamchik \cite{Adamchik1997},
\begin{equation}\label{eq:Adamchik-beta2}
\beta(2)
=
\frac{\pi}{2}
\int_{1}^{\infty}
\frac{x^{4}-6x^{2}+1}{(x^{2}+1)^{3}}
\ln(\ln x)\,dx.
\end{equation}
Similarly, Blagouchine \cite{Blagouchine2014} established the
representation
\begin{equation}\label{eq:Blagouchine-zeta3}
\zeta(3)
=
\frac{8\pi^{2}}{7}
\int_{1}^{\infty}
\frac{x(x^{4}-4x^{2}+1)}{(x^{2}+1)^{4}}
\ln(\ln x)\,dx.
\end{equation}

More recently, Kyrion \cite{Kyrion2025} generalized these isolated
evaluations to the complete families of even beta values and odd
zeta values. In particular, for $N\in\mathbb N$,
\begin{align}
\beta(2N)
&=
(-1)^{N}
\frac{\pi^{2N-1}}
     {2^{2N-1}(2N-1)!}
\int_{1}^{\infty}
\sum_{k=0}^{N-1}
\Bigg(
\sum_{j=0}^{k}
\binom{2k+1}{k-j}
(-1)^{j+1}(2j+1)^{2N-1}
\nonumber\\
&\hspace{5cm}\times
\frac{
\bigl((2k+1)x^{4}-2(2k+3)x^{2}+2k+1\bigr)x^{2k}
}
{(x^{2}+1)^{2k+3}}
\Bigg)
\ln(\ln x)\,dx,
\label{eq:Kyrion-beta}
\end{align}
and
\begin{align}
\zeta(2N+1)
&=
(-1)^{N}
\frac{2\pi^{2N}}
     {(2^{2N+1}-1)(2N)!}
\int_{1}^{\infty}
\sum_{k=1}^{N}
\Bigg(
\sum_{j=1}^{k}
\binom{2k}{k-j}
(-1)^{j}(2j)^{2N}
\nonumber\\
&\hspace{5cm}\times
\frac{
\bigl(2kx^{4}-2(2k+2)x^{2}+2k\bigr)x^{2k-1}
}
{(x^{2}+1)^{2k+2}}
\Bigg)
\ln(\ln x)\,dx.
\label{eq:Kyrion-zeta}
\end{align}

Thus, \cref{eq:Adamchik-beta2,eq:Blagouchine-zeta3} appear as the
first instances of general families. However, the price paid for the
generality of \cref{eq:Kyrion-beta,eq:Kyrion-zeta} is the occurrence
of nested finite sums, which considerably obscures the structure of
the corresponding kernels.

A much more compact representation was subsequently obtained in
\cite{TallaWaffo2025arxiv2511.02843}. There, the same two families
were written in terms of negative-order polylogarithms as
\begin{equation}\label{eq:compact-polylogarithmic-representations}
\begin{dcases}
\displaystyle
\int_{0}^{1}
\frac{\Im\!\left(\operatorname{Li}_{-2n}(ix)\right)}{x}
\ln\!\ln\frac{1}{x}\,dx
=
(-1)^{n+1}
\frac{2^{2n-1}(2n-1)!}{\pi^{2n-1}}
\beta(2n),
\\[3ex]
\displaystyle
\int_{0}^{1}
\frac{\operatorname{Li}_{-2n-1}(-x^{2})}{x}
\ln\!\ln\frac{1}{x}\,dx
=
(-1)^n
\left(1-2^{-(2n+1)}\right)
\frac{(2n)!}{2\pi^{2n}}
\zeta(2n+1).
\end{dcases}
\end{equation}

Compared with \cref{eq:Kyrion-beta,eq:Kyrion-zeta},
\cref{eq:compact-polylogarithmic-representations} eliminates the
nested combinatorial sums altogether and encodes the corresponding
Malmsten kernels in the two compact functions
$\Im\operatorname{Li}_{-2n}(ix)$ and
$\operatorname{Li}_{-2n-1}(-x^{2})$.

The proof given in \cite{TallaWaffo2025arxiv2511.02843}, however,
relied on interchanges of infinite summation and integration whose
validity was not established in that argument. The aim of the present
section is therefore to give a new and rigorous proof of
\cref{eq:compact-polylogarithmic-representations}, avoiding such
unjustified sum--integral interchanges. Our approach instead relies
on finite Eulerian expansions, shifted hyperbolic integrals,
Chebyshev polynomials of the second kind, and a coefficient-collapse
phenomenon.

Throughout this section, we use the notation 
\(\displaystyle\left\langle {m\atop k}\right\rangle\) for Eulerian numbers of type \(A\) and 
\(\displaystyle\left\langle {m\atop k}\right\rangle^{B}\) for Eulerian numbers of type \(B\).

\begin{lemma}[Eulerian--Chebyshev sums]
\label[lemma]{lem:Eulerian-Chebyshev-sums}
Let $n\geq 1$. Then the following identities hold as identities of meromorphic functions of $t \in \mathbb{C}$.
\begin{equation}\label{eq:Eulerian-Chebyshev-sums}
\begin{dcases}
\displaystyle
\sum_{k=0}^{n-1}
(-1)^k
\left\langle {2n-1\atop k}\right\rangle^{B}
U_{2n-2k-2}(\cosh t)
=
-2^{2n-2}
\frac{\cosh^{2n}t}{\sinh t}
\frac{d^{\,2n-1}}{dt^{\,2n-1}}
\operatorname{sech}t,
\\[2ex]
\displaystyle
\sum_{k=0}^{n-1}
(-1)^k
\left\langle {2n\atop k}\right\rangle
U_{2n-2k-2}(\cosh t)
=
-\frac{1}{2}
\frac{\cosh^{2n+1}t}{\sinh t}
\frac{d^{\,2n}}{dt^{\,2n}}
\tanh t.
\end{dcases}
\end{equation}
Here
$\left\langle {m\atop k}\right\rangle$ and
$\left\langle {m\atop k}\right\rangle^{B}$
denote the Eulerian numbers of type $A$ and type $B$, respectively,
and $U_m$ denotes the Chebyshev polynomial of the second kind.
\end{lemma}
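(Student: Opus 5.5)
The plan is to compute the two derivatives on the right-hand side explicitly through the classical Eulerian generating functions. Each derivative becomes a finite exponential sum. The palindromic symmetry of the Eulerian numbers then pairs its terms into hyperbolic sines, which are recognized as Chebyshev polynomials via $\sinh((N+1)t)=\sinh t\,U_N(\cosh t)$. I will prove everything first for real $t>0$, where geometric series converge, and extend to all $t\in\mathbb C$ at the end by the identity theorem. Both sides are rational functions of $e^{t}$, so this extension is legitimate.

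\emph{Type $A$ identity.} Put $x=-e^{-2t}$, so that $\tanh t=2f(t)-1$ with $f=1/(1-x)$, and $\frac{d}{dt}=-2x\frac{d}{dx}$. The classical identity
\[
\Bigl(x\tfrac{d}{dx}\Bigr)^{m}\frac1{1-x}=\sum_{j\ge0}j^m x^j=\frac{\sum_{k=0}^{m-1}\left\langle {m\atop k}\right\rangle x^{k+1}}{(1-x)^{m+1}}, \qquad m\ge1,
\]
then gives, for $t>0$,
\[
\frac{d^{2n}}{dt^{2n}}\tanh t=2^{2n+1}\frac{\sum_{k}\left\langle {2n\atop k}\right\rangle(-1)^{k+1}e^{-2(k+1)t}}{(1+e^{-2t})^{2n+1}}.
\]
Using $(1+e^{-2t})^{2n+1}=2^{2n+1}e^{-(2n+1)t}\cosh^{2n+1}t$, this becomes
\[
\cosh^{2n+1}t\,\frac{d^{2n}}{dt^{2n}}\tanh t=\sum_{k=0}^{2n-1}(-1)^{k+1}\left\langle {2n\atop k}\right\rangle e^{(2n-1-2k)t}.
\]
Now pair $k$ with $2n-1-k$ for $0\le k\le n-1$, using $\left\langle {2n\atop k}\right\rangle=\left\langle {2n\atop 2n-1-k}\right\rangle$. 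The partner index has opposite parity, so each pair contributes
\[
-2(-1)^k\left\langle {2n\atop k}\right\rangle\sinh((2n-1-2k)t)=-2(-1)^k\left\langle {2n\atop k}\right\rangle\sinh t\,U_{2n-2k-2}(\cosh t).
\]
Dividing by $-2\sinh t$ gives the second line of \eqref{eq:Eulerian-Chebyshev-sums}.

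\emph{Type $B$ identity.} With the same $x$, write $\operatorname{sech}t=2e^{-t}/(1-x)=2\sum_{j\ge0}x^je^{-t}$. Each term $e^{-(2j+1)t}$ is an eigenfunction of $d/dt$ with eigenvalue $-(2j+1)$, so
\[
\frac{d^{2n-1}}{dt^{2n-1}}\operatorname{sech}t=-2e^{-t}\sum_{j\ge0}(2j+1)^{2n-1}x^j=-2e^{-t}\frac{\sum_{k=0}^{2n-1}\left\langle {2n-1\atop k}\right\rangle^{B}x^k}{(1-x)^{2n}},
\]
by the defining generating function of the type $B$ Eulerian numbers. Clearing $(1+e^{-2t})^{2n}=2^{2n}e^{-2nt}\cosh^{2n}t$ yields
\[
\cosh^{2n}t\,\frac{d^{2n-1}}{dt^{2n-1}}\operatorname{sech}t=-2^{2-2n}\sum_{k}(-1)^k\left\langle {2n-1\atop k}\right\rangle^{B}e^{(2n-1-2k)t}.
\]
The symmetry $\left\langle {m\atop k}\right\rangle^{B}=\left\langle {m\atop m-k}\right\rangle^{B}$ pairs $k$ with $2n-1-k$. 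The sign again flips, so each pair gives $2(-1)^k\left\langle {2n-1\atop k}\right\rangle^{B}\sinh t\,U_{2n-2k-2}(\cosh t)$. Rearranging produces the factor $-2^{2n-2}$ and hence the first line of \eqref{eq:Eulerian-Chebyshev-sums}.

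\emph{Expected main obstacle.} The step I expect to be delicate is the sign and power-of-two bookkeeping. In particular, I must check that the parity flip under $k\mapsto 2n-1-k$ produces $\sinh$ rather than $\cosh$, and that the normalizations of $\left\langle\cdot\right\rangle^{B}$ used in the paper (indexing from $k=0$, symmetry $k\leftrightarrow m-k$) match the generating function I invoke. I will test both identities at $n=1$ before trusting the constants. For $n=1$ the right-hand side of the first identity is $-\cosh^2t\,(\operatorname{sech}t)'/\sinh t=1=U_0$. For the second it is $-\tfrac12\cosh^3t\,(\tanh t)''/\sinh t=1=U_0$.
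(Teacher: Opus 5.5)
Your proof is correct and is essentially the paper's argument run in the opposite direction: the paper starts from the Chebyshev side, writes $U_{2n-2k-2}(\cosh t)=\sinh((2n-2k-1)t)/\sinh t$, uses the Eulerian symmetry to complete each sum to the full polynomials $B_{2n-1}(-e^{-2t})$ and $A_{2n}(-e^{-2t})$, and then applies the same generating functions and geometric expansions of $\operatorname{sech}t$ and $\tanh t$ that you use, with the same analytic-continuation step at the end. The only thing to fix is a factor-of-two slip in your type $B$ display: clearing denominators actually gives $\cosh^{2n}t\,\frac{d^{2n-1}}{dt^{2n-1}}\operatorname{sech}t=-2^{1-2n}\sum_k(-1)^k\left\langle {2n-1\atop k}\right\rangle^{B}e^{(2n-1-2k)t}$, because the prefactor is $-2\cdot 2^{-2n}$, not $-2^{2-2n}$. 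The factor $2$ from your pairing then turns this into $-2^{2-2n}\sinh t$ times the Chebyshev sum, which gives exactly the stated constant $-2^{2n-2}$, consistent with your $n=1$ check.
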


\begin{proof}
From the hyperbolic representation
\[
U_m(\cosh t)
=
\frac{\sinh((m+1)t)}{\sinh t},
\]
we obtain
\begin{equation}\label{eq:Euler-Cheb-hyperbolic}
\begin{dcases}
\displaystyle
\sum_{k=0}^{n-1}
(-1)^k
\left\langle {2n-1\atop k}\right\rangle^{B}
U_{2n-2k-2}(\cosh t)
=
\frac{1}{\sinh t}
\sum_{k=0}^{n-1}
(-1)^k
\left\langle {2n-1\atop k}\right\rangle^{B}
\sinh((2n-2k-1)t),
\\[2ex]
\displaystyle
\sum_{k=0}^{n-1}
(-1)^k
\left\langle {2n\atop k}\right\rangle
U_{2n-2k-2}(\cosh t)
=
\frac{1}{\sinh t}
\sum_{k=0}^{n-1}
(-1)^k
\left\langle {2n\atop k}\right\rangle
\sinh((2n-2k-1)t).
\end{dcases}
\end{equation}

Using
\[
\sinh x=\frac{e^x-e^{-x}}{2},
\]
the sums on the right-hand side become
\begin{equation}\label{eq:Euler-Cheb-exponential}
\begin{dcases}
\displaystyle
\sum_{k=0}^{n-1}
(-1)^k
\left\langle {2n-1\atop k}\right\rangle^{B}
\sinh((2n-2k-1)t)
\\
\displaystyle\qquad
=
\frac{e^{(2n-1)t}}{2}
\sum_{k=0}^{n-1}
(-1)^k
\left\langle {2n-1\atop k}\right\rangle^{B}
\left(
e^{-2kt}
-
e^{-2(2n-1-k)t}
\right),
\\[3ex]
\displaystyle
\sum_{k=0}^{n-1}
(-1)^k
\left\langle {2n\atop k}\right\rangle
\sinh((2n-2k-1)t)
\\
\displaystyle\qquad
=
\frac{e^{(2n-1)t}}{2}
\sum_{k=0}^{n-1}
(-1)^k
\left\langle {2n\atop k}\right\rangle
\left(
e^{-2kt}
-
e^{-2(2n-1-k)t}
\right).
\end{dcases}
\end{equation}

We now use the symmetry relations \cite{FulmanLeeKimPetersen2021}
\[
\left\langle{m\atop r}\right\rangle=\left\langle{m\atop m-1-r}\right\rangle,
\qquad
\left\langle{m\atop r}\right\rangle^{B}=\left\langle{m\atop m-r}\right\rangle^{B}.
\]
In the second part of each finite sum, set \(j=2n-1-k.\) Therefore,
\begin{equation}
\begin{dcases}
\displaystyle
-\sum_{k=0}^{n-1}(-1)^k
\left\langle {2n-1\atop k}\right\rangle^{B}
e^{-2(2n-1-k)t}
=
\sum_{j=n}^{2n-1}(-1)^j
\left\langle {2n-1\atop j}\right\rangle^{B}
e^{-2jt},
\\[2ex]
\displaystyle
-\sum_{k=0}^{n-1}(-1)^k
\left\langle {2n\atop k}\right\rangle
e^{-2(2n-1-k)t}
=
\sum_{j=n}^{2n-1}(-1)^j
\left\langle {2n\atop j}\right\rangle
e^{-2jt}.
\end{dcases}
\end{equation}

Consequently,
\begin{equation}\label{eq:Euler-Cheb-completed}
\begin{dcases}
\displaystyle
\sum_{k=0}^{n-1}
(-1)^k
\left\langle {2n-1\atop k}\right\rangle^{B}
\left(
e^{-2kt}
-
e^{-2(2n-1-k)t}
\right)
=
\sum_{k=0}^{2n-1}
(-1)^k
\left\langle {2n-1\atop k}\right\rangle^{B}
e^{-2kt},
\\[3ex]
\displaystyle
\sum_{k=0}^{n-1}
(-1)^k
\left\langle {2n\atop k}\right\rangle
\left(
e^{-2kt}
-
e^{-2(2n-1-k)t}
\right)
=
\sum_{k=0}^{2n-1}
(-1)^k
\left\langle {2n\atop k}\right\rangle
e^{-2kt}.
\end{dcases}
\end{equation}

Introduce the Eulerian polynomials
\[
B_m(x)
=
\sum_{k=0}^{m}
\left\langle {m\atop k}\right\rangle^{B}x^k,
\qquad
A_m(x)
=
\sum_{k=0}^{m-1}
\left\langle {m\atop k}\right\rangle x^k.
\]
Equation \eqref{eq:Euler-Cheb-completed} then gives
\begin{equation}\label{eq:Euler-Cheb-polynomials}
\begin{dcases}
\displaystyle
\sum_{k=0}^{n-1}
(-1)^k
\left\langle {2n-1\atop k}\right\rangle^{B}
U_{2n-2k-2}(\cosh t)
=
\frac{e^{(2n-1)t}}{2\sinh t}
B_{2n-1}(-e^{-2t}),
\\[3ex]
\displaystyle
\sum_{k=0}^{n-1}
(-1)^k
\left\langle {2n\atop k}\right\rangle
U_{2n-2k-2}(\cosh t)
=
\frac{e^{(2n-1)t}}{2\sinh t}
A_{2n}(-e^{-2t}).
\end{dcases}
\end{equation}

We now employ the classical generating identities
\[
\frac{B_m(x)}{(1-x)^{m+1}}
=
\sum_{r=0}^{\infty}(2r+1)^m x^r,
\qquad
\frac{A_m(x)}{(1-x)^{m+1}}
=
\sum_{r=0}^{\infty}(r+1)^m x^r.
\]
Taking $x=-e^{-2t}$ with $\Re t > 0$ yields
\begin{equation}\label{eq:Euler-Cheb-generating}
\begin{dcases}
\displaystyle
B_{2n-1}(-e^{-2t})
=
(1+e^{-2t})^{2n}
\sum_{r=0}^{\infty}
(-1)^r(2r+1)^{2n-1}e^{-2rt},
\\[3ex]
\displaystyle
A_{2n}(-e^{-2t})
=
(1+e^{-2t})^{2n+1}
\sum_{r=0}^{\infty}
(-1)^r(r+1)^{2n}e^{-2rt}.
\end{dcases}
\end{equation}

Since \(\displaystyle1+e^{-2t}=2e^{-t}\cosh t,\) substitution into \eqref{eq:Euler-Cheb-polynomials} gives
\begin{equation}\label{eq:Euler-Cheb-series}
\begin{dcases}
\displaystyle
\sum_{k=0}^{n-1}
(-1)^k
\left\langle {2n-1\atop k}\right\rangle^{B}
U_{2n-2k-2}(\cosh t)
=
2^{2n-1}
\frac{\cosh^{2n}t}{\sinh t}
\sum_{r=0}^{\infty}
(-1)^r(2r+1)^{2n-1}e^{-(2r+1)t},
\\[3ex]
\displaystyle
\sum_{k=0}^{n-1}
(-1)^k
\left\langle {2n\atop k}\right\rangle
U_{2n-2k-2}(\cosh t)
=
2^{2n}
\frac{\cosh^{2n+1}t}{\sinh t}
\sum_{r=1}^{\infty}
(-1)^{r-1}r^{2n}e^{-2rt}.
\end{dcases}
\end{equation}

Finally,
\[
\operatorname{sech}t
=
\frac{2e^{-t}}{1+e^{-2t}}
=
2\sum_{r=0}^{\infty}
(-1)^r e^{-(2r+1)t},
\]
and therefore
\[
\frac{d^{\,2n-1}}{dt^{\,2n-1}}
\operatorname{sech}t
=
-2\sum_{r=0}^{\infty}
(-1)^r(2r+1)^{2n-1}e^{-(2r+1)t}.
\]
Moreover,
\[
\sum_{r=1}^{\infty}
(-1)^{r-1}e^{-2rt}
=
\frac{1}{e^{2t}+1}
=
\frac{1-\tanh t}{2},
\]
so that
\[
2^{2n}
\sum_{r=1}^{\infty}
(-1)^{r-1}r^{2n}e^{-2rt}
=
-\frac{1}{2}
\frac{d^{\,2n}}{dt^{\,2n}}
\tanh t.
\]
Substituting these two identities into
\eqref{eq:Euler-Cheb-series} proves
\eqref{eq:Eulerian-Chebyshev-sums}. Since both sides are meromorphic functions of t, the identities extend to $\mathbb{C}$ by analytic continuation.
\end{proof}

\begin{lemma}[Eulerian--Chebyshev coefficient collapse]
\label[lemma]{lem:Eulerian-Chebyshev-collapse}
Let $n\geq 1$ and $p\geq 1$. Then
\begin{equation}\label{eq:Eulerian-Chebyshev-collapse}
\begin{dcases}
\displaystyle
[z^{2n-1}]\left\{
(\arcsin z)^{2p-1}
\sum_{k=0}^{n-1}
(-1)^k
\left\langle {2n-1\atop k}\right\rangle^{B}
U_{2n-2k-2}(z)\right\}
=
(-1)^{n-1}2^{2n-2}(2n-1)!\,\delta_{p,n},
\\[3ex]
\displaystyle
[z^{2n}]\left\{
(\arcsin z)^{2p}
\sum_{k=0}^{n-1}
(-1)^k
\left\langle {2n\atop k}\right\rangle
U_{2n-2k-2}(z)\right\}
=
\frac{(-1)^{n-1}}{2}(2n)!\,\delta_{p,n},
\end{dcases}
\end{equation}
where $\delta_{p,n}$ denotes the Kronecker delta.
\end{lemma}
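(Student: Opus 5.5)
The plan is to mirror the contour arguments of \cref{lemma:cd-sums-to-arcsine-chebyshev} and \cref{lem:general-cotangent-collapse}. Write $P^{B}_n(z)$ and $P^{A}_n(z)$ for the two Eulerian--Chebyshev polynomials inside the braces. Then $P^B_n$ is even of degree $2n-2$, and the product $(\arcsin z)^{2p-1}P^B_n(z)$ is odd with order $2p-1$ at the origin.

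By Cauchy's coefficient formula, the first coefficient equals
\[
\frac{1}{2\pi i}\oint_{C_\rho}\frac{(\arcsin z)^{2p-1}P^B_n(z)}{z^{2n}}\,dz .
\]
I then make the local biholomorphic substitution $z=\sin u$, so that $dz=\cos u\,du$. This turns the integral into
\[
\frac{1}{2\pi i}\oint u^{2p-1}\,\frac{P^B_n(\sin u)\cos u}{\sin^{2n}u}\,du
\]
over a small circle around $u=0$. The analogous expression with $u^{2p}$, $P^A_n$ and $\sin^{2n+1}u$ represents the second coefficient.

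The key step is to evaluate $P^B_n(\sin u)$ and $P^A_n(\sin u)$ using \cref{lem:Eulerian-Chebyshev-sums}, which holds for all complex $t$ as an identity of meromorphic functions. I set $t=i(\pi/2-u)$, which gives $\cosh t=\sin u$, $\sinh t=i\cos u$, $\operatorname{sech}t=\csc u$ and $\tanh t=i\cot u$. Since $dt=-i\,du$, we have $d/dt=i\,d/du$, and so $d^{2n-1}/dt^{2n-1}=i^{2n-1}d^{2n-1}/du^{2n-1}$ and $d^{2n}/dt^{2n}=(-1)^n d^{2n}/du^{2n}$. Inserting these, the first integrand should simplify to
\[
(-1)^n2^{2n-2}\,u^{2p-1}\,\frac{d^{2n-1}}{du^{2n-1}}\csc u ,
\]
and the second to
\[
\frac{(-1)^{n+1}}{2}\,u^{2p}\,\frac{d^{2n}}{du^{2n}}\cot u .
\]
The prefactors $\cos u/\sin^{2n}u$ and $\cos u/\sin^{2n+1}u$ cancel exactly against the factors $\cosh^{2n}t/\sinh t$ and $\cosh^{2n+1}t/\sinh t$ coming from the lemma. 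I expect the careful bookkeeping of the powers of $i$ in this step to be the main point where errors could creep in.

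Next I integrate by parts along the closed contour, exactly as in the proof of \cref{lem:general-cotangent-collapse}. This moves all derivatives onto the monomial: the first integral becomes $-(-1)^n2^{2n-2}$ times the integral of $\frac{d^{2n-1}}{du^{2n-1}}u^{2p-1}\cdot\csc u$, and the second becomes $\frac{(-1)^{n+1}}{2}$ times the integral of $\frac{d^{2n}}{du^{2n}}u^{2p}\cdot\cot u$. There are three cases.

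\begin{itemize}
\item If $p<n$, the derivative of the monomial vanishes identically. This is the "exact cancellation" case for $p<n$.
\item If $p>n$, the original $z$-integrand is holomorphic at the origin by the order count. Equivalently, $u^{2p-2n}\csc u$ and $u^{2p-2n}\cot u$ are holomorphic, so the coefficient is $0$.
\item If $p=n$, the derivative equals $(2n-1)!$ (respectively $(2n)!$). The residues of $\csc u$ and $\cot u$ at $u=0$ both equal $1$, which yields $(-1)^{n-1}2^{2n-2}(2n-1)!$ and $\frac{(-1)^{n-1}}{2}(2n)!$, as claimed.
\end{itemize}
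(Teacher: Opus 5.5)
Your proposal is correct and follows essentially the same route as the paper. Like the paper, you substitute $t=i(\pi/2-y)$ into \cref{lem:Eulerian-Chebyshev-sums}, apply Cauchy's coefficient formula with $z=\sin y$, integrate by parts along the closed contour, and split into the cases $p<n$, $p=n$, $p>n$, and your signs and powers of $i$ agree with the paper's at every step.
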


\begin{proof}
We first transform the identities of
\cref{lem:Eulerian-Chebyshev-sums}.
Set \(\displaystyle
t=i\left(\frac{\pi}{2}-y\right).\)
Then
\[
\cosh t=\sin y,
\qquad
\sinh t=i\cos y,
\qquad
\operatorname{sech}t=\csc y,
\qquad
\tanh t=i\cot y,
\]
and
\[
\frac{d}{dt}=i\frac{d}{dy}.
\]
Hence \cref{lem:Eulerian-Chebyshev-sums} gives
\begin{equation}\label{eq:Eulerian-Chebyshev-trigonometric}
\begin{dcases}
\displaystyle
\sum_{k=0}^{n-1}
(-1)^k
\left\langle {2n-1\atop k}\right\rangle^{B}
U_{2n-2k-2}(\sin y)
=
(-1)^n2^{2n-2}
\frac{\sin^{2n}y}{\cos y}
\frac{d^{\,2n-1}}{dy^{\,2n-1}}\csc y,
\\[3ex]
\displaystyle
\sum_{k=0}^{n-1}
(-1)^k
\left\langle {2n\atop k}\right\rangle
U_{2n-2k-2}(\sin y)
=
\frac{(-1)^{n-1}}{2}
\frac{\sin^{2n+1}y}{\cos y}
\frac{d^{\,2n}}{dy^{\,2n}}\cot y.
\end{dcases}
\end{equation}

Let $C$ be a sufficiently small positively oriented circle
centered at the origin, contained in the disk of analyticity of
the principal branch of $\arcsin$. By Cauchy's coefficient formula,
\[
[z^m]F(z)
=
\frac{1}{2\pi i}
\int_C
\frac{F(z)}{z^{m+1}}\,dz.
\]

For the first coefficient in
\cref{eq:Eulerian-Chebyshev-collapse}, we therefore have
\begin{align*}
&[z^{2n-1}]\left\{
(\arcsin z)^{2p-1}
\sum_{k=0}^{n-1}
(-1)^k
\left\langle {2n-1\atop k}\right\rangle^{B}
U_{2n-2k-2}(z)\right\}
\\
&\qquad=
\frac{1}{2\pi i}
\int_C
\frac{(\arcsin z)^{2p-1}}{z^{2n}}
\sum_{k=0}^{n-1}
(-1)^k
\left\langle {2n-1\atop k}\right\rangle^{B}
U_{2n-2k-2}(z)\,dz.
\end{align*}

Make the substitution
\[
z=\sin y,
\qquad
dz=\cos y\,dy,
\]
and let $\Gamma=\arcsin(C)$ be the corresponding positively
oriented contour around the origin. Using
\cref{eq:Eulerian-Chebyshev-trigonometric}, we obtain
\begin{align*}
&[z^{2n-1}]\left\{
(\arcsin z)^{2p-1}
\sum_{k=0}^{n-1}
(-1)^k
\left\langle {2n-1\atop k}\right\rangle^{B}
U_{2n-2k-2}(z)\right\}
\\
&\qquad=
\frac{(-1)^n2^{2n-2}}{2\pi i}
\int_{\Gamma}
y^{2p-1}
\frac{d^{\,2n-1}}{dy^{\,2n-1}}\csc y\,dy.
\end{align*}

Integrating by parts $2n-1$ times along the closed contour
$\Gamma$ gives
\begin{align}
&[z^{2n-1}]\left\{
(\arcsin z)^{2p-1}
\sum_{k=0}^{n-1}
(-1)^k
\left\langle {2n-1\atop k}\right\rangle^{B}
U_{2n-2k-2}(z)\right\}
\nonumber\\
&\qquad=
\frac{(-1)^{n-1}2^{2n-2}}{2\pi i}
\int_{\Gamma}
\frac{d^{\,2n-1}}{dy^{\,2n-1}}
\left(y^{2p-1}\right)
\csc y\,dy.
\label{eq:collapse-B-Cauchy}
\end{align}

We now distinguish three cases:
\[
\frac{d^{\,2n-1}}{dy^{\,2n-1}}
\left(y^{2p-1}\right)
=
\begin{cases}
0,
& p<n,
\\[1ex]
(2n-1)!,
& p=n,
\\[1ex]
\displaystyle
\frac{(2p-1)!}{(2p-2n)!}\,
y^{2p-2n},
& p>n.
\end{cases}
\]

If $p<n$, the integral in
\cref{eq:collapse-B-Cauchy} vanishes immediately.

If $p=n$, then
\[
\frac{1}{2\pi i}
\int_{\Gamma}\csc y\,dy
=
\frac{1}{2\pi i}
\int_{\Gamma}
\frac{1}{y}\frac{y}{\sin y}\,dy
=1,
\]
by Cauchy's integral formula, since $y/\sin y$ is holomorphic
near $0$ and \(\displaystyle\left.\frac{y}{\sin y}\right|_{y=0}=1.\)
Thus
\[
[z^{2n-1}]\left\{
(\arcsin z)^{2n-1}
\sum_{k=0}^{n-1}
(-1)^k
\left\langle {2n-1\atop k}\right\rangle^{B}
U_{2n-2k-2}(z)\right\}
=
(-1)^{n-1}2^{2n-2}(2n-1)!.
\]

If $p>n$, then
\[
y^{2p-2n}\csc y
=
y^{2p-2n-1}\frac{y}{\sin y}
\]
is holomorphic inside $\Gamma$, and therefore its contour integral
vanishes by Cauchy's theorem. Hence
\[
[z^{2n-1}]\left\{
(\arcsin z)^{2p-1}
\sum_{k=0}^{n-1}
(-1)^k
\left\langle {2n-1\atop k}\right\rangle^{B}
U_{2n-2k-2}(z)\right\}
=
(-1)^{n-1}2^{2n-2}(2n-1)!\,\delta_{p,n}.
\]

For the second coefficient, Cauchy's coefficient formula gives
\begin{align*}
&[z^{2n}]\left\{
(\arcsin z)^{2p}
\sum_{k=0}^{n-1}
(-1)^k
\left\langle {2n\atop k}\right\rangle
U_{2n-2k-2}(z)\right\}
\\
&\qquad=
\frac{1}{2\pi i}
\int_C
\frac{(\arcsin z)^{2p}}{z^{2n+1}}
\sum_{k=0}^{n-1}
(-1)^k
\left\langle {2n\atop k}\right\rangle
U_{2n-2k-2}(z)\,dz.
\end{align*}

Again setting
\[
z=\sin y,
\qquad
dz=\cos y\,dy,
\]
and using
\cref{eq:Eulerian-Chebyshev-trigonometric}, we obtain
\begin{align*}
&[z^{2n}]\left\{
(\arcsin z)^{2p}
\sum_{k=0}^{n-1}
(-1)^k
\left\langle {2n\atop k}\right\rangle
U_{2n-2k-2}(z)\right\}
\\
&\qquad=
\frac{(-1)^{n-1}}{2}
\frac{1}{2\pi i}
\int_{\Gamma}
y^{2p}
\frac{d^{\,2n}}{dy^{\,2n}}\cot y\,dy.
\end{align*}

Integrating by parts $2n$ times yields
\begin{align}
&[z^{2n}]\left\{
(\arcsin z)^{2p}
\sum_{k=0}^{n-1}
(-1)^k
\left\langle {2n\atop k}\right\rangle
U_{2n-2k-2}(z)\right\}
\nonumber\\
&\qquad=
\frac{(-1)^{n-1}}{2}
\frac{1}{2\pi i}
\int_{\Gamma}
\frac{d^{\,2n}}{dy^{\,2n}}
\left(y^{2p}\right)
\cot y\,dy.
\label{eq:collapse-A-Cauchy}
\end{align}

Here
\[
\frac{d^{\,2n}}{dy^{\,2n}}
\left(y^{2p}\right)
=
\begin{cases}
0,
& p<n,
\\[1ex]
(2n)!,
& p=n,
\\[1ex]
\displaystyle
\frac{(2p)!}{(2p-2n)!}\,
y^{2p-2n},
& p>n.
\end{cases}
\]

If $p<n$, the integral in
\cref{eq:collapse-A-Cauchy} vanishes.

If $p=n$, then
\[
\frac{1}{2\pi i}
\int_{\Gamma}\cot y\,dy
=
\frac{1}{2\pi i}
\int_{\Gamma}
\frac{1}{y}
\frac{y\cos y}{\sin y}\,dy
=1,
\]
since $y\cos y/\sin y$ is holomorphic near $0$ and \(\displaystyle\left.\frac{y\cos y}{\sin y}\right|_{y=0}=1.\)
Therefore
\[
[z^{2n}]\left\{
(\arcsin z)^{2n}
\sum_{k=0}^{n-1}
(-1)^k
\left\langle {2n\atop k}\right\rangle
U_{2n-2k-2}(z)\right\}
=
\frac{(-1)^{n-1}}{2}(2n)!.
\]

Finally, if $p>n$, then
\[
y^{2p-2n}\cot y
=
y^{2p-2n-1}
\frac{y\cos y}{\sin y}
\]
is holomorphic inside $\Gamma$, so its contour integral vanishes
by Cauchy's theorem. Consequently,
\[
[z^{2n}]\left\{
(\arcsin z)^{2p}
\sum_{k=0}^{n-1}
(-1)^k
\left\langle {2n\atop k}\right\rangle
U_{2n-2k-2}(z)\right\}
=
\frac{(-1)^{n-1}}{2}(2n)!\,\delta_{p,n}.
\]

This proves \cref{eq:Eulerian-Chebyshev-collapse}.
\end{proof}

\begin{proposition}[Polylogarithmic integral evaluations]
\label[proposition]{thm:polylogarithmic-integral-evaluations}
For every $n\in\mathbb{N}$,
\begin{equation}\label{eq:polylogarithmic-integral-evaluations}
\begin{dcases}
\displaystyle
\int_{0}^{1}
\frac{\Im\!\left(\operatorname{Li}_{-2n}(ix)\right)}{x}
\ln\!\ln\frac{1}{x}\,dx
=
(-1)^{n-1}2^{2n-1}(2n-1)!
\frac{\beta(2n)}{\pi^{2n-1}},
\\[3ex]
\displaystyle
\int_{0}^{1}
\frac{\operatorname{Li}_{-2n-1}(-x^{2})}{x}
\ln\!\ln\frac{1}{x}\,dx
=
(-1)^n
\frac{(2^{2n+1}-1)(2n)!}{2^{2n+2}}
\frac{\zeta(2n+1)}{\pi^{2n}}.
\end{dcases}
\end{equation}
\end{proposition}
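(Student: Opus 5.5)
The plan is to reduce both integrals to the shifted hyperbolic integrals of \cref{prop:shifted_integrals_chebyshev_arcsine}, and then let \cref{lem:Eulerian-Chebyshev-collapse} kill every off-diagonal coefficient.

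\textbf{Step 1: pass to the half-line.} Substitute $x=e^{-t}$, so that $dx/x=-dt$ and $\ln\ln(1/x)=\ln t$. Both integrals become $\int_0^\infty F(t)\ln t\,dt$, with
\[
F_\beta(t)=\Im\operatorname{Li}_{-2n}(ie^{-t}),
\qquad
F_\zeta(t)=\operatorname{Li}_{-2n-1}(-e^{-2t}).
\]
For nonpositive order, $\operatorname{Li}_{-s}(w)=wA_s(w)/(1-w)^{s+1}$ is a rational function. This gives a \emph{finite} Eulerian expression, so no series interchange is ever needed.

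\textbf{Step 2: identify $F$ as an odd-order derivative.} Using the Eulerian generating identities already invoked in the proof of \cref{lem:Eulerian-Chebyshev-sums}, I expect
\[
F_\beta=-\tfrac12\,\frac{d^{2n}}{dt^{2n}}\sech t,
\qquad
F_\zeta=c_n\,\frac{d^{2n+1}}{dt^{2n+1}}\tanh t
\]
for an explicit constant $c_n$, a power of $2$ up to sign. These are identities of rational functions in $e^{-t}$. They can be checked termwise on $\Re t>0$ and then hold identically, or verified directly from $\operatorname{Li}_{-s-1}=w\,\frac{d}{dw}\operatorname{Li}_{-s}$ with $w\frac{d}{dw}=-\frac{d}{dt}$ (up to the factor $2$ in the $\zeta$ case).

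\textbf{Step 3: one integration by parts.} Write $G=\frac{d^{2n-1}}{dt^{2n-1}}\sech t$ (respectively $\frac{d^{2n}}{dt^{2n}}\tanh t$). Then
\[
\int_0^\infty G'(t)\ln t\,dt=-\int_0^\infty \frac{G(t)}{t}\,dt .
\]
The boundary terms vanish for three reasons: $G$ is an odd function, so $G(t)=O(t)$ at $0$; $t\ln t\to0$; and $G$ decays exponentially at $\infty$.

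\textbf{Step 4: reduce to shifted integrals.} By \cref{lem:Eulerian-Chebyshev-sums}, $G(t)$ equals a constant times
\[
\frac{\sinh t}{\cosh^{2n}t}\sum_{k}(-1)^k\left\langle{2n-1\atop k}\right\rangle^B U_{2n-2k-2}(\cosh t)
\]
(respectively with $\cosh^{2n+1}$ and type-$A$ Eulerian numbers). Since $\sinh t\,U_{2r}(\cosh t)=\sinh((2r+1)t)$, the integral $\int_0^\infty G/t\,dt$ is a finite combination of
\[
\int_0^\infty\frac{\sinh((2r+1)t)}{t\cosh^{2n}t}\,dt
\qquad\text{or}\qquad
\int_0^\infty\frac{\sinh((2r+1)t)}{t\cosh^{2n+1}t}\,dt,
\qquad r=n-k-1<n .
\]
Apply \cref{prop:shifted_integrals_chebyshev_arcsine} to each term. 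Then interchange the two finite sums and use linearity of $[z^{2n-1}]$ (respectively $[z^{2n}]$). The coefficient of $\beta(2p)/\pi^{2p-1}$ (respectively $\zeta(2p+1)/\pi^{2p}$) becomes exactly the left-hand side of \eqref{eq:Eulerian-Chebyshev-collapse}, times $2^{2p}$ (respectively $2^{2p+1}-1$). \Cref{lem:Eulerian-Chebyshev-collapse} then forces $p=n$ and supplies the value.

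\textbf{Main obstacle.} The step I expect to be hardest is the bookkeeping of constants and signs: the factor $-\tfrac12$ and $c_n$ from Step 2, the minus sign from integration by parts, the $-2^{2n-2}$ and $-\tfrac12$ normalisations in \cref{lem:Eulerian-Chebyshev-sums}, and the factors $2^{2n}$ and $2^{2n+1}-1$. I would fix these first by checking the case $n=1$ against \eqref{eq:Adamchik-beta2} and \eqref{eq:Blagouchine-zeta3}, and then carry the general-$n$ constants through. A secondary point is to confirm the boundary behaviour in Step 3 rigorously, which rests only on the oddness and exponential decay of $G$.
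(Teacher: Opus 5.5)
Your argument is correct once one sign is fixed. From Step 4 on it is exactly the paper's proof; the difference lies in how you reach the hyperbolic reduction.

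\textbf{The sign.} In Step 2 the correct constant is $F_\beta=+\tfrac12\,\frac{d^{2n}}{dt^{2n}}\sech t$, not $-\tfrac12$. The reason is that $\Im\operatorname{Li}_0(ie^{-t})=\tfrac12\sech t$, and $w\frac{d}{dw}=-\frac{d}{dt}$ is applied an even number of times. Equivalently, $\Im\operatorname{Li}_{-2n}(ie^{-t})=\sum_{r\ge0}(-1)^r(2r+1)^{2n}e^{-(2r+1)t}$. In the $\zeta$ case the constant is $c_n=-2^{-2n-2}$.

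With these values, your integration by parts and \cref{lem:Eulerian-Chebyshev-sums} give the prefactors $2^{1-2n}$ and $-2^{-2n-1}$ in front of the Eulerian sums of shifted integrals. That is precisely \eqref{eq:polylogarithmic-hyperbolic-reduction}. The collapse then yields
$2^{1-2n}\cdot 2^{2n}\cdot(-1)^{n-1}2^{2n-2}(2n-1)!=(-1)^{n-1}2^{2n-1}(2n-1)!$
and
$-2^{-2n-1}(2^{2n+1}-1)\cdot\tfrac{(-1)^{n-1}}{2}(2n)!=(-1)^n(2^{2n+1}-1)(2n)!/2^{2n+2}$,
as claimed. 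With your $-\tfrac12$, the $\beta$ case would come out with the wrong sign; your planned $n=1$ check against \eqref{eq:Adamchik-beta2} would catch this.

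\textbf{Comparison with the paper.} The paper does not derive \eqref{eq:polylogarithmic-hyperbolic-reduction}. It imports finite Eulerian identities in the $x$-variable from \cite{TallaWaffo2026arxiv2602.16761}, then only substitutes $x=e^{-t}$ and applies \eqref{eq:exponential-to-hyperbolic}. You instead obtain the reduction inside the paper:
\begin{itemize}
\item the kernel is an exact derivative of $\sech t$ or $\tanh t$;
\item one integration by parts turns $\ln t$ into $1/t$, which is the $-1/\ln x$ of the imported identities;
\item \cref{lem:Eulerian-Chebyshev-sums}, which the paper uses only indirectly through \cref{lem:Eulerian-Chebyshev-collapse}, converts $G$ into the Eulerian--Chebyshev combination of shifted integrands.
\end{itemize}
This makes the section self-contained, apart from \cref{prop:shifted_integrals_chebyshev_arcsine}, which both routes need. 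It also shows that the same functions $\frac{d^{2n-1}}{dt^{2n-1}}\sech t$ and $\frac{d^{2n}}{dt^{2n}}\tanh t$ drive both the reduction and the collapse.

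The price is the light analysis you already flagged, and it goes through:
\begin{itemize}
\item the boundary terms vanish by oddness of $G$ and exponential decay;
\item each shifted integral converges separately, because $2r+1\le 2n-1<2n$;
\item the series in Step 2 are used only to verify a pointwise identity of functions, and can be bypassed by the finite $w\frac{d}{dw}$ recursion, so no sum--integral interchange is reintroduced.
\end{itemize}
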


\begin{proof}
We start from the identities established in
\cite{TallaWaffo2026arxiv2602.16761}:
\begin{equation}\label{eq:polylogarithmic-Eulerian-reduction}
\begin{dcases}
\displaystyle
\int_{0}^{1}
\frac{\Im\!\left(\operatorname{Li}_{-2n}(ix)\right)}{x}
\ln\!\ln\frac{1}{x}\,dx
=
-\int_{0}^{1}
\frac{1}{\ln x}\,
\frac{1}{(1+x^{2})^{2n}}
\sum_{k=0}^{n-1}
\left\langle {2n-1\atop k}\right\rangle^{B}
\left\{
(-x^{2})^{k}
+
(-x^{2})^{2n-1-k}
\right\}\,dx,
\\[4ex]
\displaystyle
\int_{0}^{1}
\frac{\operatorname{Li}_{-2n-1}(-x^{2})}{x}
\ln\!\ln\frac{1}{x}\,dx
=
-\frac{1}{2}
\int_{0}^{1}
\frac{1}{x\ln x}\,
\frac{1}{(1+x^{2})^{2n+1}}
\sum_{k=0}^{n-1}
\left\langle {2n\atop k}\right\rangle
\left\{
(-x^{2})^{2n-k}
+
(-x^{2})^{k+1}
\right\}\,dx.
\end{dcases}
\end{equation}

Set \(\displaystyle x=e^{-t}.\) Then
\[
dx=-e^{-t}\,dt,
\qquad
\ln x=-t,
\qquad
1+e^{-2t}=2e^{-t}\cosh t.
\]
Moreover,
\begin{equation}\label{eq:exponential-to-hyperbolic}
\begin{dcases}
\displaystyle
(-e^{-2t})^{k}
+
(-e^{-2t})^{2n-1-k}
=
2(-1)^k e^{-(2n-1)t}
\sinh((2n-2k-1)t),
\\[2ex]
\displaystyle
(-e^{-2t})^{2n-k}
+
(-e^{-2t})^{k+1}
=
-2(-1)^k e^{-(2n+1)t}
\sinh((2n-2k-1)t).
\end{dcases}
\end{equation}
Consequently,
\begin{equation}\label{eq:polylogarithmic-hyperbolic-reduction}
\begin{dcases}
\displaystyle
\int_{0}^{1}
\frac{\Im\!\left(\operatorname{Li}_{-2n}(ix)\right)}{x}
\ln\!\ln\frac{1}{x}\,dx
=
\frac{1}{2^{2n-1}}
\sum_{k=0}^{n-1}
(-1)^k
\left\langle {2n-1\atop k}\right\rangle^{B}
\int_{0}^{\infty}
\frac{\sinh((2n-2k-1)t)}
{t\cosh^{2n}t}\,dt,
\\[4ex]
\displaystyle
\int_{0}^{1}
\frac{\operatorname{Li}_{-2n-1}(-x^{2})}{x}
\ln\!\ln\frac{1}{x}\,dx
=
-\frac{1}{2^{2n+1}}
\sum_{k=0}^{n-1}
(-1)^k
\left\langle {2n\atop k}\right\rangle
\int_{0}^{\infty}
\frac{\sinh((2n-2k-1)t)}
{t\cosh^{2n+1}t}\,dt.
\end{dcases}
\end{equation}

Applying
\cref{prop:shifted_integrals_chebyshev_arcsine}
with \(\displaystyle j=n-k-1,\)
so that
\[
2j+1=2n-2k-1,
\qquad
U_{2j}(z)=U_{2n-2k-2}(z),
\]
and interchanging the finite sums, we obtain
\begin{equation}\label{eq:polylogarithmic-coefficient-reduction}
\begin{dcases}
\displaystyle
\int_{0}^{1}
\frac{\Im\!\left(\operatorname{Li}_{-2n}(ix)\right)}{x}
\ln\!\ln\frac{1}{x}\,dx
=
\sum_{p=1}^{n}
2^{\,2p-2n+1}
\frac{\beta(2p)}{\pi^{2p-1}}
[z^{2n-1}]\left\{
(\arcsin z)^{2p-1}
\sum_{k=0}^{n-1}
(-1)^k
\left\langle {2n-1\atop k}\right\rangle^{B}
U_{2n-2k-2}(z)\right\},
\\[4ex]
\displaystyle
\int_{0}^{1}
\frac{\operatorname{Li}_{-2n-1}(-x^{2})}{x}
\ln\!\ln\frac{1}{x}\,dx
=
-\sum_{p=1}^{n}
\frac{2^{2p+1}-1}{2^{2n+1}}
\frac{\zeta(2p+1)}{\pi^{2p}}
[z^{2n}]\left\{
(\arcsin z)^{2p}
\sum_{k=0}^{n-1}
(-1)^k
\left\langle {2n\atop k}\right\rangle
U_{2n-2k-2}(z)\right\}.
\end{dcases}
\end{equation}

By \cref{lem:Eulerian-Chebyshev-collapse}, the coefficient
expressions in \cref{eq:polylogarithmic-coefficient-reduction}
vanish for every \(p\neq n\). Thus only the term \(p=n\) survives in each sum. This completes the proof.
\end{proof}

\section*{Acknowledgments}

The author acknowledges the use of an AI language model for assistance with the presentation of the manuscript, numerical experimentation, and verification of elementary asymptotic estimates.

\printbibliography

\end{document}